\definecolor{darkgray}{rgb}{0.25,0.25,0.25}
\definecolor{lightgray}{rgb}{0.75,0.75,0.75}
\newcommand{\QM}{\mathcal{M}}
\newcommand{\dlim}{\displaystyle \lim}
\newcommand{\B}{{\mathcal B}}
\newcommand{\ns}[1]{\overline{#1}} % "non-standard" extension -- may
\newcommand{\tr}{^\intercal}
\newcommand{\matr}[1]{\begin{bmatrix} #1 \end{bmatrix}}    % matrix
\newcommand{\rec}[1]{{#1}^\infty}
\renewcommand{\P}{{\mathcal P}}
\newcommand\N{{\mathbb N}}
\newcommand\R{{\mathbb R}}
\newcommand{\LAA}{Linear Algebra and its Applications}
\newcommand{\refer}[1]{\mbox{\rm(\ref{#1})}}
\newtheorem{theorem}{Theorem}
\newtheorem{lemma}{Lemma}
\newtheorem{proposition}{Proposition}
\newtheorem{corollary}{Corollary}
\newtheorem{definition}{Definition}
\newtheorem{example}{Example}
\newcommand{\brac}[1]{{\left(#1\right)}}
\renewcommand{\int}{\operatorname{int}}
\newcommand{\closure}{\operatorname{cl}}
\newcommand{\socp}{\mathcal{L}}
 \def\newblock{\ }%
\title{\bf Positive polynomials on unbounded domains}
\author[1]{Javier Pe\~na\thanks{{\tt jfp@andrew.cmu.edu} } }
\author[2]{Juan ~C. Vera\thanks{\tt j.c.veralizcano@uvt.nl}}
\author[3]{Luis ~F. Zuluaga\thanks{ {\tt luis.zuluaga@lehigh.edu} (Corresponding author)}}
\affil[1]{Tepper School of Business, Carnegie Mellon University, Pittsburgh, PA, USA 15213}
\affil[2]{Tilburg School of Economics and Management, Tilburg University, Tilburg, The Netherlands}
\affil[3]{Department of Industrial and Systems Administration, Lehigh University, Bethlehem, PA, USA 18015}
\date{}
\begin{document}

\maketitle
\abstract{%Certificates of non-negativity are fundamental tools in optimization.
Certificates of non-negativity such as Putinar's
Positivstellensatz have been used to obtain powerful numerical techniques
to solve polynomial optimization (PO) problems. Putinar's certificate uses sum-of-squares (sos) polynomials to certify the non-negativity of a given polynomial over a domain defined by polynomial inequalities.  This  certificate assumes the Archimedean property of the associated quadratic module, which in particular implies compactness of the domain.
In this paper we characterize the existence of a certificate of non-negativity for polynomials over a possibly unbounded domain, without the use of the associated quadratic module.
Next, we show that the certificate can be used to
convergent linear matrix inequality (LMI) hierarchies for PO problems with unbounded feasible sets. Furthermore, by using copositive polynomials to certify non-negativity, instead of sos polynomials,
the certificate allows the use of a very rich class of convergent LMI hierarchies
to approximate the solution of general PO problems.
Throughout the article we illustrate our results with various examples
certifying the non-negativity of  polynomials over possibly unbounded
sets defined by polynomial equalities or inequalities.}

%\KEYWORDS{Positivstellensatz, polynomial optimization, linear matrix inequalities, global optimization}
%\MSCCLASS{90C26}
%\ORMSCLASS{Primary: Programming; secondary: Non-linear}
%\HISTORY{}

\section{Introduction}
\label{sec:intro}

A certificate of non-negativity is generally understood as an expression that makes the non-negativity of the function in question evident. Certificates of non-negativity are fundamental tools in optimization,
and underlie powerful algorithmic techniques for various types of optimization problems.
For example, Farkas Lemma \cite[see, e.g.,][]{Chva83} can be interpreted as an expression that makes the non-negativity of a linear
polynomial over a polyhedron evident. Similarly, the $S$-Lemma
\cite[see, e.g.,][]{BenTN01} can be used to certify whether a quadratic
polynomial is non-negative over a set with non-empty interior defined by a single quadratic
inequality.
More elaborate certificates of non-negativity for higher degree polynomials over a basic semialgebraic set include the classical P\'olya's Positivstellensatz~\cite{HardLP88}, and the more modern Schm\"udgen's Positivstellensatz~\cite{Schm91} and  Putinar's Positivstellensatz~\cite{Puti93}
(herein, we will use the terms Positivstellensat and certificate of non-negativity
intercheangably).
These certificates of non-negativity for polynomials can be used to solve polynomial optimization~(PO)
problems; that is, optimization problems in which both the objective and the constraints are polynomials
on the decision variables. Clearly, PO problems encompass a very general class of optimization problems including
combinatorial and non-convex optimization problems.

More specifically, certificates of non-negativity for polynomials can be used
to construct a hierarchy of {\em linear matrix inequality} (LMI)  problems (i.e.,
optimization problems with a linear objective and a LMI constraint) \cite[cf.,][]{BlekPT13} whose
objective converges to the objective of the PO problem of interest.
In turn, {\em interior-point methods} \cite[cf.,][]{Rene01} can be effectively used to obtain the global solution
of the LMI problems.
As illustrated in recent research work \cite[e.g.,][among numerous others]{deKlP02, Jibed05, PenaVZ05, Lass09, HenrL03, Lass01, PapaPP02, Parr03}, P\'olya's, Schm\"udgen's, and Putinar's Positivstellensatz
are suitable certificates of non-negativity for this purpose.
%This approach to address the solution of PO problems prompted by the seminal
%work of \citet{Shor87, Nest97, Lass01, Parr00} has resulted in a rich area of research
%now commonly referred  as {\em polynomial optimization} \cite[cf.,][]{Anjo12, BlekPT13}.

In general, the non-negativity of a polynomial can be certified using the Krivine-Stengle's Positivstellensatz \citep{Kriv64,Sten74}.
However, as discussed in detail by \citet{JeyaLL13}, this certificate is not readily amenable to the LMI solution approach  for PO problems outlined above.  On the other hand, Schm\"udgen's and Putinar's Positivstellensatz
are amenable to this LMI solution approach, but require
the underlying semialgebraic set $S$ (over which the non-negativity of the polynomial is to be certified) to be bounded. More precisely, a sufficient condition for Schm\"udgen's Positivstellensatz
to hold is that the set $S$ must be compact. In the case of Putinar's Positivstellensatz, a sufficient condition for the certificate to hold is that the {\em quadratic module}~\citep[cf.,][]{PresD01}
generated by the polynomials defining the set $S$ must be {\em Archimedean}~\cite[cf.,][]{Puti93}, which implies that the set~$S$ is compact.

Given the key role that compactness and the Archimedean property play in
both Schm\"ud\-gen's and Putinar's certificates of non-negativity, there has been active and
relevant research into studying cases in which certificates of non-negativity
can be obtained and applied without relying on those properties.
Global optimization (i.e., unrestricted optimization) of a polynomial is a
general case in this class that has received a lot of attention in the literature.
For example, consider the work of \citet[][among others]{Pham2008, Schweighofer2006}. Here, we focus on the case of optimizing a polynomial over constrained non-compact feasible sets.
Recent examples of work in this direction are the results of \citet{Powe04, Mars10}, who
derived certificates of non-negativity
for the case in which the underlying domain is
a cylinder with a compact cross-section. For more general settings,
\citet{NieDS06, DemmNP07, Mars09} provide
certificates of non-negativity that do not require the underlying set to
be compact
via gradient and KKT ideals. More recently,  \citet{JeyaLL13}
provide a certificate of non-negativity for non-compact semi-algebraic sets when the quadratic
module of the polynomials defining the semi-algebraic feasible set of a PO problem,
 together with a polynomial defining
a level set of the objective's  PO problem is Archimedean.
As mentioned before, the classical $S$-Lemma can be viewed as a certificate of non-negativity for quadratic polynomials.
Thus, extensions of the $S$-Lemma such as the ones
by \citet[][]{SturZ01, xia15, wang15} also belong to this direction of research.

We present a new certificate of non-negativity for polynomials over the possibly unbounded set obtained from the intersection of a closed domain $S$ and $h^{-1}(0) = \{x \in \R^n: h(x) = 0\}$, the zero set of a given polynomial~$h(x)$.  It is evident that if $p(x)$ is non-negative on the domain $S$, then $p(x) + h(x) q(x)$ is non-negative on the domain $S \cap h^{-1}(0)$ for any polynomial $q(x)$.
In \cite[][Theorem~1]{PenaVZ08} it is shown that (modulo an appropriate closure) the converse is true
when the domain $S$ is compact, thereby establishing a certificate of non-negativity for polynomials on $S \cap h^{-1}(0)$ in terms of non-negative polynomials on $S$.

The main contribution of this article is to show that
under suitable conditions on $h(x)$ and~$S$, the
non-negativity of a polynomial over the set $S \cap h^{-1}(0)$
can be certified in terms of the non-negative polynomials on $S$
even if the set $S$ is unbounded. Moreover, a characterization of
the sets  $S \cap h^{-1}(0)$ for which the certificate of non-negativity
exists is provided in terms of an appropriate condition on the
set $S \cap h^{-1}(0)$ (cf., Theorem~\ref{main.thm.equ}).
This result is obtained via a key characterization of
the closure of the non-negative polynomials on $S$ plus the
polynomials generated by the ideal of $h(x)$ (cf., Theorem~\ref{thm:finerEqGen}).
Beyond providing a novel contribution to the literature on certificates of
non-negativity over possibly unbounded sets, Theorem~\ref{main.thm.equ} has some
important consequences.

Unlike
previous related results in \cite{DemmNP07,Mars09,NieDS06, JeyaLL13},
the proposed certificate of non-negativity is
independent of the polynomial defining the objective of an associated PO problem.
Instead, the certificate is
written purely in terms of the set of non-negative polynomials over a set $S$ and the ideal generated by $h(x)$.
Also, unlike the recent related results in  \citet{JeyaLL13},
and as a result of the use of the non-negative polynomials on a set $S$, the associated quadratic module and the Archimedean property
are not used to characterize the cases in which the proposed certificate of non-negativity holds.

The certificate of non-negativity presented here readily allows the
use of {\em copositive polynomials}~\cite[cf.,][]{BlekPT13} to certify
the non-negativity of a polynomial (as opposed to the more common use
of sums-of-squares polynomials to certify non-negativity). As a consequence,
a very rich class of convergent hierarchies of LMI problems to approximate the solution of general PO problems is obtained. The fact that copositive polynomials
can be used in the proposed certificate of non-negativity, together
with Polya's Positivstellensatz~\cite[see, e.g.,][]{HardLP88}, means that
convergent linear programming~(LP) hierarchies can be constructed to approximate
the solution of general PO problems.

Moreover, the results presented in the article
provide an interesting bridge between the results on
certificates of non-negativity in algebraic geometry (like
 Schm\"udgen's and Putinar's Positivstellensatz) and
 results on certificates of non-negativity arising in
 the general area of {\em quadratic programming}~\cite[cf.,][]{BenTN01}.

The remainder of the article is organized as follows.
%In Section~\ref{sec:prelim},
%we introduce the key concepts and notation used throughout the article.
In Section~\ref{sec:main.result}, we  motivate and formally state our main result; namely,
a characterization for the existence of a certificates of non-negativity for polynomials
over a possibly unbounded set
obtained from the intersection of a closed
domain $S$ and the zero set of a given polynomial $h(x)$ (cf., Theorem~\ref{main.thm.equ}).
Also, we present some important consequences of this result.
In Section~\ref{sec:inequalities}, we extend the results presented
in Section~\ref{sec:main.result} to consider inequality constraints; that is, when the underlying set of interest
is defined as the intersection of a closed domain~$S$ and the set $h^{-1}(\R_+):=\{x \in \R^n: h(x) \ge 0\}$ for
a given polynomial $h(x)$ (cf., Theorem~\ref{thm:ineq}). In Section~\ref{sec:extensions}, we provide further relevant extensions of our main results.
For the purpose of clarity, the presentation of some of the proofs is deferred until
Sections~\ref{sec:proofs},~\ref{sec:counter}, and~\ref{sec:infconds}. In Section~\ref{sec:remarks}, we provide some concluding remarks.

%\section{Preliminaries and Notation}
%\label{sec:prelim}

\section{A new certificate of non-negativity.}\label{sec:main.result}

Certificates of non-negativity for polynomials such as the classical P\'olya's Positivstellensatz~\cite{HardLP88}, and the more modern Schm\"udgen's \cite{Schm91} and  Putinar's  Positivstellensatz~\cite{Puti93},
are central in
polynomial optimization  \citep[cf.,][]{Anjo12,BlekPT13,Parr00,Lass01, Marshall2008}. Both Schm\"udgen's and  Putinar's Positivstellensatz certify the non-negativity
of a polynomial using {\em sum of squares} (sos) polynomials which can be defined as follows.
\begin{definition}[Sums of squares (sos) polynomials]
\label{def:sos}
 Let $\Sigma$ be the set of sos polynomials; that is,
 $\Sigma = \{\sigma \in \R[x]: \sigma(x) = \sum_{i=1}^r q_i(x)^2
\text{ for some }r \in \N, q_i(x) \in \R[x], i=1,\dots,r\}$. Also, let~$\Sigma_d$ be the set of sos polynomials of degree $\le d$;
that is, $\Sigma_d= \Sigma \cap \R_d[x]$.
\end{definition}

Above,
$\R[x]:=\R[x_1,\dots,x_n]$ denotes the set of real ($n$-variate) polynomials, and
$\R_d[x]:=\R_d[x_1,\dots,x_n]$ denotes the set of real  polynomials
of degree $\le d$. Clearly, sos polynomials are always non-negative on any domain. In general we will refer
to non-negative polynomials on a given domain using the following notation.

\begin{definition}[Cone of non-negative polynomials]
For any set $S \subseteq \R^n$ let $\P(S)$ be set of non-negative ($n$-variate) polynomials
on $S$; that is, $\P(S) = \{p \in \R[x]: p(x) \ge 0 \text{ for all } x \in S\}$. Also, let
$\P_d(S)$ be the set of non-negative polynomials of degree $\le d$ on $S$;
that is, $\P_d(S) = \P(S) \cap \R_d[x]$.
\end{definition}

Notice that while $\R[x]$ is an infinite dimensional vector space, $\R_d[x]$ is finite dimensional. In particular each polynomial in $\R_d[x]$ can be identified with the vector of its coefficients in 
$\R^{n+d \choose d}$; notions such as limits and closures over sets of polynomials in $\R_d[x]$ are naturally interpreted through this identification.

\citet{Puti93}
uses sos polynomials to
 certify the non-negativity of polynomials over a given basic semi-algebraic set  $S = \{x \in \R^n: g_i(x) \ge 0, i=1,\dots,m\}$. More precisely,
Putinar's Positivstellensatz provides a certificate of non-negativity for polynomials over a given basic semi-algebraic set  $S = \{x \in \R^n: g_i(x) \ge 0, i=1,\dots,m\}$ in terms of the {\em quadratic module} $\QM(g_1,\dots,g_m)$ generated by
the polynomials $g_1,\dots,g_m \in \R[x]$ when $\QM(g_1,\dots,g_m)$ is {\em Archimedean}~\citep[cf.,][]{PresD01}.
A neccesary condition for the Archimedean property to hold is that the set $S$ is compact.

Here, we present novel certificates of non-negativity
for polynomials over potentially unbounded sets. The results are based on
extending the
certificate of non-negativity derived by \citet[][Corollary~2]{PenaVZ08}
for polynomials on the intersection
of a compact set $S$ and the zero set
\[h^{-1}(0) := \{x \in \R^n: h(x) = 0\},\]
of a polynomial $h \in \R[x]$. Specifically, consider the following proposition that is a direct consequence
of~\cite[][Corollary~2]{PenaVZ08}.

\begin{proposition}
\label{th:compact} Assume $S \subseteq \R^n$ is compact and $h \in \P_d(S)$. Then
\begin{equation}
\label{eq.result_compact}
\P_d(S\cap h^{-1}(0)) = \closure(\P_d(S) + h(x)\R_{d-\deg(h)}[x]).
\end{equation}
\end{proposition}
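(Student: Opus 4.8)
The plan is to prove the two set inclusions separately, working in the finite-dimensional space $\R_d[x] \cong \R^{\binom{n+d}{d}}$ under the coefficient identification recalled in the excerpt. The inclusion $\supseteq$ is elementary; the reverse inclusion $\subseteq$ is the real content, and I would obtain it by convex-conic duality (the bipolar theorem), exploiting the compactness of $S$ together with the hypothesis $h \in \P_d(S)$, i.e. $h \ge 0$ on $S$. Throughout, let $v_d(x)$ denote the vector of monomials of degree $\le d$ evaluated at $x$, so that $p(x) = \ip{p}{v_d(x)}$.

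For the inclusion $\supseteq$, any $p = \sigma + h q$ with $\sigma \in \P_d(S)$ and $q \in \R_{d-\deg(h)}[x]$ has $\deg(hq) \le \deg(h) + (d - \deg(h)) = d$, so $p \in \R_d[x]$, and on $S \cap h^{-1}(0)$ one has $p = \sigma \ge 0$; hence $\P_d(S) + h\R_{d-\deg(h)}[x] \subseteq \P_d(S\cap h^{-1}(0))$. Since $\P_d(S\cap h^{-1}(0)) = \{p \in \R_d[x] : \ip{p}{v_d(x)} \ge 0 \text{ for all } x \in S \cap h^{-1}(0)\}$ is an intersection of closed half-spaces, it is closed, and therefore also contains $\closure(\P_d(S) + h\R_{d-\deg(h)}[x])$.

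For the inclusion $\subseteq$, set $K := \closure(\P_d(S) + h\R_{d-\deg(h)}[x])$, a closed convex cone; by the bipolar theorem it suffices to prove $L(p) \ge 0$ for every $L$ in the dual cone $K^*$. Using $(A+B)^* = A^* \cap B^*$, a functional $L$ lies in $K^*$ iff $L \ge 0$ on $\P_d(S)$ and $L(hq) = 0$ for all $q \in \R_{d-\deg(h)}[x]$ (the latter because $h\R_{d-\deg(h)}[x]$ is a subspace). The pivotal step is the identity $\P_d(S)^* = \cone\{v_d(x) : x \in S\}$: since $\P_d(S) = (\cone\{v_d(x):x\in S\})^*$ by definition, its dual is the closure of that cone, and the closure is superfluous because $\{v_d(x) : x \in S\}$ is compact and its component indexed by the constant monomial is identically $1$ (so $\convex\{v_d(x):x\in S\}$ is compact and misses the origin, making the cone over it closed). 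Consequently every $L \in K^*$ is a finitely-atomic moment functional $L = \sum_i \lambda_i v_d(x_i)$ with $\lambda_i \ge 0$ and $x_i \in S$, i.e. $L(p) = \sum_i \lambda_i p(x_i)$. Now take $q = 1 \in \R_{d-\deg(h)}[x]$ (valid since $\deg(h) \le d$) in $L(hq)=0$, giving $\sum_i \lambda_i h(x_i) = 0$; as $\lambda_i \ge 0$ and $h(x_i) \ge 0$, each summand vanishes, so for every $i$ either $\lambda_i = 0$ or $h(x_i) = 0$. In the first case $\lambda_i p(x_i) = 0$, and in the second $x_i \in S \cap h^{-1}(0)$, whence $p(x_i) \ge 0$; either way $\lambda_i p(x_i) \ge 0$, so $L(p) \ge 0$ and the bipolar theorem yields $p \in K$.

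The main obstacle is the identity $\P_d(S)^* = \cone\{v_d(x):x\in S\}$ with no extra closure, i.e. the closedness of the moment cone: this is precisely where compactness of $S$ is used, and it is the point at which this finite-degree statement reduces to the compact-case representation of \cite{PenaVZ08}. The assumption $h \in \P_d(S)$ enters only through the sign argument forcing the atoms of $L$ onto $h^{-1}(0)$; dropping either compactness or the non-negativity of $h$ on $S$ would break the corresponding step, consistent with these being the standing hypotheses of the cited corollary.
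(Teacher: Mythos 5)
Your proof is correct, but there is nothing in the paper to compare it against line by line: the paper never proves Proposition~\ref{th:compact}, importing it instead as a direct consequence of \cite[Corollary~2]{PenaVZ08}, and spending its own effort (Section~\ref{sec:proofs}) on reducing the unbounded case to this compact one via compactification. What you have written is therefore a self-contained replacement for the citation, by the standard conic-duality route, and you put the weight exactly where it belongs. The two genuine ingredients are (i) closedness of the truncated moment cone $\cone\{v_d(x) : x \in S\}$ for compact $S$ --- compact generating set whose constant coordinate is identically $1$, so its convex hull is compact and misses the origin --- which upgrades the bipolar identity $\P_d(S)^* = \closure(\cone\{v_d(x) : x \in S\})$ to one without the closure and makes every dual functional finitely atomic with atoms in $S$; and (ii) the evaluation at $q = 1$ (legitimate since $h \in \P_d(S)$ forces $\deg(h) \le d$), the only point where non-negativity of $h$ on $S$ is used, which pushes every atom of positive weight onto $S \cap h^{-1}(0)$. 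Beyond making the result self-contained, your argument buys a diagnosis that the citation hides: compactness enters solely through the closedness in (i), and it is exactly this closedness that fails for unbounded $S$ --- the phenomenon behind Example~\ref{ex:couterInit} and the reason Theorem~\ref{main.thm.equ} needs the horizon-cone condition \eqref{eq.condition}. Minor polish: your ``misses the origin'' step tacitly assumes $S \neq \emptyset$, and the case $S \cap h^{-1}(0) = \emptyset$ deserves a sentence (your argument then shows the dual cone is $\{0\}$, so both sides of \eqref{eq.result_compact} are all of $\R_d[x]$); neither affects correctness.
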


Notice that one of the
inclusions in~\eqref{eq.result_compact} readily follows
even if the set $S$ is not necessarily compact.

\begin{lemma}\label{lem:subset}
Let $S \subseteq \R^n$, $h \in \P_d(S)$. Then $\closure(\P_d(S) + h(x)\R_{d-\deg(h)}[x]) \subseteq \P_d(S\cap h^{-1}(0))$.
\end{lemma}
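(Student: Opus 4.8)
The plan is to reduce the statement to the non-closed inclusion $\P_d(S) + h(x)\R_{d-\deg(h)}[x] \subseteq \P_d(S\cap h^{-1}(0))$ and then pass to the closure, exploiting the fact that the target cone is already closed. First I would observe that $\P_d(S\cap h^{-1}(0))$ is a closed subset of $\R_d[x]$: under the identification of $\R_d[x]$ with $\R^{\binom{n+d}{d}}$, for each fixed $x$ the evaluation $p \mapsto p(x)$ is linear, hence continuous in the coefficient vector, so $\{p \in \R_d[x] : p(x) \ge 0\}$ is a closed half-space and $\P_d(S\cap h^{-1}(0)) = \bigcap_{x \in S \cap h^{-1}(0)} \{p : p(x) \ge 0\}$ is an intersection of closed sets.

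Next I would verify the set inclusion on generators. Take $p \in \P_d(S)$ and $q \in \R_{d-\deg(h)}[x]$; since $h \in \P_d(S)$ forces $\deg(h) \le d$, the index $d - \deg(h)$ is non-negative and the polynomial $p + hq$ has degree $\le \max\{\deg p,\ \deg h + \deg q\} \le d$, so $p + hq \in \R_d[x]$. For any $x \in S \cap h^{-1}(0)$ one has $h(x) = 0$, whence $(p + hq)(x) = p(x) \ge 0$ because $x \in S$ and $p \in \P_d(S)$. Thus $p + hq \in \P_d(S\cap h^{-1}(0))$, giving $\P_d(S) + h(x)\R_{d-\deg(h)}[x] \subseteq \P_d(S\cap h^{-1}(0))$.

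Finally, taking closures of both sides and using that the right-hand side is closed yields $\closure(\P_d(S) + h(x)\R_{d-\deg(h)}[x]) \subseteq \closure(\P_d(S\cap h^{-1}(0))) = \P_d(S\cap h^{-1}(0))$, as claimed.

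As for the main difficulty: the argument is essentially a direct verification, and no serious obstacle arises. The only point requiring a small amount of care is the closedness of $\P_d(S\cap h^{-1}(0))$, which is what lets us absorb the closure operation; everything else is degree bookkeeping and the elementary observation that $h$ vanishes on $h^{-1}(0)$. This is precisely the ``easy'' inclusion of the compact-case identity in Proposition~\ref{th:compact}, so one expects it to persist without the compactness hypothesis, and indeed compactness of $S$ is never invoked anywhere above.
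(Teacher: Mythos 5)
Your proof is correct. The paper itself offers no written proof of Lemma~\ref{lem:subset} (it is presented as ``readily'' following), and your argument --- verifying the inclusion on generators, where $(p+hq)(x)=p(x)\ge 0$ for $x\in S\cap h^{-1}(0)$, and then absorbing the closure because $\P_d(S\cap h^{-1}(0))$ is an intersection of closed half-spaces in $\R_d[x]$ --- is precisely the evident argument the paper implicitly relies on.
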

That is, being an element of the $\closure(\P_d(S) + h(x)\R_{d-\deg(h)}[x])$ certifies the non-negativity
of a polynomial on $S \cap h^{-1}(0)$, provided that the polynomial $h \in \P_d(S)$.
In light of Lemma~\ref{lem:subset}, Proposition~\ref{th:compact} states that if $S$ is compact, then the certificate
of non-negativity~\eqref{eq.result_compact} exists for
any polynomial non-negative on $S \cap h^{-1}(0)$ when $h \in \P_d(S)$.
As the next example illustrates, this fact might fail when the set $S$ is unbounded.

\begin{example}\label{ex:couterInit}
As illustrated in Figure~\ref{fig:counterexample},
let $d=4$, $S = \R^2_+$, and $h(x) = (x_1x_2 + 1)(x_1-x_2)^2$. Note that $h(x) \in \P_4(\R^2_+)$ and  $q(x) := x_2^4 - x_1^4 \in \P_4(\R^2_+ \cap h^{-1}(0))$.
 Let $x^k = (k, -1/k)$ and notice that $h(x^k) = 0$ for all $k > 0$.
 We claim that $\lim_{k \to \infty} k^{-4}p(x^k)\ge 0$
 for any $p(x) \in \closure(\P_4(\R^2_+) + h(x)\R)$, and therefore $q(x) \notin \closure(\P_4(\R^2_+) + h(x)\R)$, as
 $\lim_{k \to \infty} k^{-4}q(x^k)= -1$.
To prove the claim, let $p(x) = \lim_{i \to \infty} p_i(x) + c_ih(x)$ with $p_i(x) \in \P_4(\R^2_+)$ and $c_i \in \R$ for all~$i \ge 0$. Also, for any $k > 0$ let $\tilde x^k = (k, 1/k)$. For any $i\ge0$, we have that $k^4(p_i(x^k) - p_i(\tilde x^k))$ is a polynomial of degree 7
(on $k$). Thus, using that $\tilde x^k \in \R^2_+$, for any $i \ge 0$ and $k>0$  we have
\[
\lim_{k \to \infty} \frac{p_i(x^k)}{k^4} = \lim_{k \to \infty} \frac{k^4(p_i(x^k)-p_i(\tilde x^k))}{k^8} + \lim_{k \to \infty}\frac{p_i(\tilde x^k)}{k^4} = \lim_{k \to \infty}\frac{p_i(\tilde x^k)}{k^4} \ge 0.
\]
Therefore,
%\[
%\lim_{k \to \infty} \frac{p(x^k)}{k^4} = 
%\lim_{k \to \infty} \lim_{i \to \infty}\frac{p_i(x^k)}{k^4} \ge
%\lim_{k \to \infty} \lim_{i \to \infty} \frac{k^4(p_i(x^k)-p_i(\tilde x^k))}{k^8}
%\lim_{i \to \infty} \lim_{k \to \infty} \frac{p_i(x^k)}{k^4}  \ge 0.
%\]

{\tt TO BE FIXED... }

\[
\lim_{k \to \infty} \frac{p(x^k)}{k^4} = 
\lim_{k \to \infty} \frac{k^4(p(x^k)-p(\tilde x^k))}{k^8} + \frac{p(\tilde x^k)}{k^4} =
\lim_{k \to \infty} \lim_{i \to \infty} \frac{p_i(\tilde x^k) + 2(k-\frac{1}{k})^2c_i}{k^4}
\]

\end{example}

 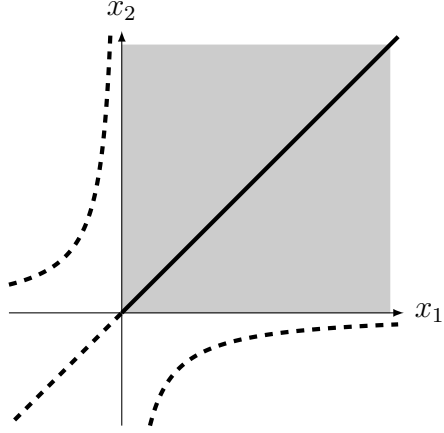
\begin{figure}
\begin{center}
\begin{tikzpicture}[scale=.75]
   \draw[fill = gray!40!white, gray!40!white] (0,0) rectangle (4.75,4.75);
    \draw[-latex,color=black] (-2,0) -- (5,0);
  \draw[-latex,color=black] (0,-2) -- (0,5);

 \node[right] at (5,0){$x_1$};
 \node[above] at (0,5){$x_2$};
   \draw[smooth,samples=100,domain=-1.9:0, ultra thick, color = black, dashed]
  plot(\x,\x);
  \draw[smooth,samples=100,domain=-0:4.9, ultra thick, color = black]
  plot(\x,\x);
   \draw[smooth,samples=100,domain=0.5:4.9, ultra thick, color = black, dashed]
  plot(\x,-1/\x);
  \draw[smooth,samples=100,domain=-2:-0.2, ultra thick, color = black, dashed]
  plot(\x,-1/\x);
\end{tikzpicture}
\end{center}
\caption{Illustration of Example~\ref{ex:couterInit}. The set $S = \R^2_+$ is illustrated in lightgray, while the bold line
illustrates the set
$S \cap h^{-1}(0)$ with $h^{-1}(0) = \{x \in \R^2: (x_1x_2 + 1)(x_1-x_2)^2 = 0\}$. The union of the
bold line and the doted line illustrate the set  $h^{-1}(0)$.}
\label{fig:counterexample}
\end{figure}

The key point in Example~\ref{ex:couterInit} is the fact that $x^k$, $k >0$, which is a sequence of zeroes of~$h$, gets arbitrarily close to the unbounded
set $S = \R^2_+$ when $k \to \infty$; that is, $h$ has a {\em zero at infinity} in $S$ \cite[cf.,][]{Rezn00}.
This is illustrated in Figure~\ref{fig:counterexample}.
 The special property of this zero at infinity is that it is not a zero at infinity of
 the set $S \cap h^{-1}(0)$ (illustrated with a bold line in Figure~\ref{fig:counterexample}).
In Theorem~\ref{main.thm.equ} below, we show that there
is an equivalence relationship between
the existence of a certificate of non-negativity of the form
\eqref{eq.result_compact} and an appropriate condition on the set $S \cap h^{-1}(0)$.
When $S$ is compact, the condition is evidently satisfied as
neither the set
$S$ or $S \cap h^{-1}(0)$ has zeros at infinity. As shown in
Example~\ref{ex:cont} this is not necessarily the case
 when the set $S$ is unbounded.
To formalize this condition, we first introduce some additional definitions and results.

Given a polynomial $h \in \R[x]$, let $\tilde h(x)$ denote the homogeneous component of $h$ of highest total degree.  In other words,
$\tilde h(x)$ is obtained by dropping from $h$ the terms whose total degree is less than $\deg(h)$.

\begin{definition}[Horizon cone]
\label{def:horizon}
The horizon cone $\rec{S}$  of
a given set  $S \subseteq \R^n$ is defined as (see, e.g., \cite{RockW98}):
\[
\rec{S}:=\{ y \in \R^n:  \text{ there exist } x^k \in S, \; \lambda^k \in \R_+, \; k=1,2,\dots \; \text{ such that }\lambda^k \downarrow 0 \text{ and }   \lambda^k x^k \rightarrow y\}.
\]
In particular, if $S = \emptyset$, define $\rec{S} := \emptyset$.
\end{definition}

As Proposition~\ref{prop:inf} below shows, the non-negativity of a polynomial and the
non-negativity of its homogenous component of highest degree can be related through
the horizon cone.

\begin{proposition}[{\citep[][]{PenaVZ15}}]
\label{prop:inf} Let $S \subseteq \R^n$. If $p(x) \in \P_d(S)$, then $\tilde p(x) \in \P_d(\rec{S})$.
\end{proposition}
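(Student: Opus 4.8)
The plan is to verify the defining inequality of $\P_d(\rec{S})$ directly. Since $\tilde p$ is a homogeneous component of $p$, it already lies in $\R_d[x]$, so the degree bound is automatic and it suffices to show $\tilde p(y) \ge 0$ for every $y \in \rec{S}$ (the case $S = \emptyset$, where $\rec{S} = \emptyset$, being vacuous). First I would fix such a $y$ and, invoking Definition~\ref{def:horizon}, choose $x^k \in S$ and $\lambda^k \in \R_+$ with $\lambda^k \downarrow 0$ and $\lambda^k x^k \to y$.

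Next, writing $e := \deg(p)$ so that $\tilde p$ is homogeneous of degree $e$, I would decompose $p = \tilde p + r$ with $\deg(r) \le e-1$. The only piece of information available is that $p(x^k) \ge 0$ for each $k$; scaling by the nonnegative factor $(\lambda^k)^e$ gives
\[
0 \le (\lambda^k)^e\, p(x^k) = (\lambda^k)^e\, \tilde p(x^k) + (\lambda^k)^e\, r(x^k),
\]
and I would then handle the two right-hand terms separately as $k \to \infty$. Homogeneity of $\tilde p$ rewrites the first term as $\tilde p(\lambda^k x^k)$, which converges to $\tilde p(y)$ by continuity of $\tilde p$ together with $\lambda^k x^k \to y$.

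For the second term, the idea is that the surplus powers of the vanishing scalar $\lambda^k$ annihilate the lower-order contributions. Expanding $r = \sum_{|\alpha|\le e-1} c_\alpha x^\alpha$ in monomials and regrouping,
\[
(\lambda^k)^e\, r(x^k) = \sum_{|\alpha|\le e-1} c_\alpha\, (\lambda^k)^{\,e-|\alpha|}\, (\lambda^k x^k)^\alpha,
\]
where each exponent $e - |\alpha| \ge 1$. Since $\lambda^k \to 0$ while $(\lambda^k x^k)^\alpha \to y^\alpha$ remains bounded, every summand tends to $0$, hence $(\lambda^k)^e\, r(x^k) \to 0$. Passing to the limit in the displayed inequality then yields $0 \le \tilde p(y)$, which is what we want.

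I expect the only delicate point to be the bookkeeping in the second term, namely the rewriting $(\lambda^k)^e (x^k)^\alpha = (\lambda^k)^{\,e-|\alpha|}(\lambda^k x^k)^\alpha$, which converts the dangerous growth of $(x^k)^\alpha$ (note that $\|x^k\|\to\infty$ whenever $y \ne 0$) into a harmless product of a vanishing scalar and a convergent, bounded factor. Everything else reduces to continuity and the homogeneity identity $\tilde p(\lambda z) = \lambda^e \tilde p(z)$.
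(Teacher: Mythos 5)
Your proof is correct. The paper offers no proof of Proposition~\ref{prop:inf} (it is imported from \citep[][]{PenaVZ15}), but your argument --- scaling $p(x^k)\ge 0$ by $(\lambda^k)^{\deg p}$, recovering $\tilde p(y)$ from the top component via homogeneity, and annihilating each lower-order monomial through the surplus factor $(\lambda^k)^{e-|\alpha|}$ --- is exactly the limiting technique the paper itself employs for the companion result, Proposition~\ref{prop:oneSideFree} in Section~\ref{sec:infconds}, so it matches the intended approach.
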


%\begin{proof}
%The statement is a direct consequence of  in .
%\end{proof}

Next, we provide a general and interesting characterization of $\closure(\P_d(S) +h(x)\R_{d-\deg(h)}[x])$
(cf., eq.~\eqref{eq.result_compact}) that is essential for
the results presented here. The proof of this characterization
%relies on results that are presented
is presented in Section~\ref{sec:proofs} for the purpose of clarity.

\begin{theorem}\label{thm:finerEqGen}
Assume $K\subseteq \R^n$ is a pointed closed convex cone. Let $S\subseteq K$ be a closed set and $h \in \P_d(S)$.
Then,
\begin{equation}
\label{eq:finerGen}
\closure(\P_d(S) + h(x)\R_{d-\deg(h)}[x])= \{p(x) \in \P_d(S\cap h^{-1}(0)): \tilde p(x) \in \P_d( \rec{S} \cap \tilde h^{-1}(0))\}.
\end{equation}
\end{theorem}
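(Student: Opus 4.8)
The plan is to prove the nontrivial inclusion by convex duality and to obtain the reverse inclusion from explicit dual functionals. Throughout, I work in the finite-dimensional space $\R_d[x]$ under the pairing coming from the coefficient identification, write $W := h(x)\R_{d-\deg(h)}[x]$ (a linear subspace) and $L := \P_d(S) + W$, and let $M$ denote the right-hand side of~\eqref{eq:finerGen}. Since $L$ is a convex cone, $\closure(L)$ is a closed convex cone and the bipolar theorem gives $\closure(L) = (L^*)^*$ with $L^* = \P_d(S)^* \cap W^{\perp}$. For $x \in \R^n$ let $e_x(p) := p(x)$, and for $y \in \R^n$ let $e^\infty_y(p)$ denote the value at $y$ of the degree-$d$ homogeneous component of $p$; note $e^\infty_y(p) = \tilde p(y)$ whenever $\deg(p) = d$.

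The first step is a usable description of $\P_d(S)^*$. Because $\P_d(S) = (\cone\{e_x : x \in S\})^*$, the bipolar theorem gives $\P_d(S)^* = \closure\,\cone\{e_x : x \in S\}$. Using that $K$ is pointed and $S \subseteq K$, I would upgrade this to: every $\ell \in \P_d(S)^*$ is a finite sum $\ell = \sum_k \lambda_k e_{x_k} + \sum_m \mu_m e^\infty_{y_m}$ with $\lambda_k,\mu_m \ge 0$, $x_k \in S$, and $y_m \in \rec{S}$. Concretely, pointedness yields a linear form $a$ with $\langle a, z\rangle > 0$ for all $z \in K \setminus \{0\}$, so $\langle a, x\rangle^d \in \P_d(S)$ (as $\langle a,\cdot\rangle \ge 0$ on $S$); writing a generic element of $\cone\{e_x\}$ via Carath\'eodory as a combination of at most $\dim \R_d[x]$ evaluations and testing against $\langle a, x\rangle^d$ bounds the masses $\lambda_{k}\|x_{k}\|^{d}$ track by track, so bounded points converge (by closedness of $S$) to point evaluations and escaping points converge, after normalization, to $e^\infty_{y_m}$ with $y_m \in \rec{S}$.

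The main inclusion is $M \subseteq \closure(L)$. Given $p \in M$, it suffices to show $\langle \ell, p\rangle \ge 0$ for every $\ell \in L^* = \P_d(S)^* \cap W^{\perp}$, which I write in the representation above. The crux is to exploit $\ell \perp W$. Since $h\cdot 1 \in W$, we get $\ell(h) = \sum_k \lambda_k h(x_k) + \sum_m \mu_m\,e^\infty_{y_m}(h) = 0$; as $x_k \in S$ gives $h(x_k)\ge 0$, and Proposition~\ref{prop:inf} applied to $h \in \P_d(S)$ gives $\tilde h(y_m)\ge 0$, every term is nonnegative and hence vanishes (covering $\tilde h(y_m)=0$ when $\deg(h)=d$). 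Testing instead against $h\cdot\langle a, x\rangle^{d-\deg(h)} \in W$, whose degree-$d$ part is $\tilde h\cdot\langle a,x\rangle^{d-\deg(h)}$, and using $\langle a,y_m\rangle>0$, forces $\tilde h(y_m)=0$ also when $\deg(h)<d$. Thus each active $x_k$ lies in $S \cap h^{-1}(0)$ and each active $y_m$ in $\rec{S} \cap \tilde h^{-1}(0)$, whence $\langle \ell, p\rangle = \sum_k \lambda_k\, p(x_k) + \sum_m \mu_m\, \tilde p(y_m) \ge 0$, the first sum because $p \in \P_d(S \cap h^{-1}(0))$ and the second because $\tilde p \in \P_d(\rec{S} \cap \tilde h^{-1}(0))$.

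For the reverse inclusion $\closure(L) \subseteq M$, Lemma~\ref{lem:subset} already gives $\closure(L) \subseteq \P_d(S\cap h^{-1}(0))$, leaving only the condition on $\tilde p$. Here I would check that for each $y \in \rec{S} \cap \tilde h^{-1}(0)$ the functional $e^\infty_y$ lies in $L^*$: it belongs to $\P_d(S)^*$ by Proposition~\ref{prop:inf}, and it annihilates $W$ since the degree-$d$ part of $h q$ is $\tilde h\cdot(\cdots)$, which vanishes at $y$ because $\tilde h(y)=0$. Hence $\tilde p(y) = \langle e^\infty_y, p\rangle \ge 0$ for every $p \in \closure(L)$ with $\deg(p)=d$. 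I expect the main obstacle to be exactly the degree bookkeeping: when $\deg(p)<d$ the functionals $e^\infty_y$ see only the (vanishing) degree-$d$ part of $p$, so certifying $\tilde p(y)\ge 0$ requires a more delicate limiting argument along a sequence $x^l \in S$ with $\|x^l\|\to\infty$ and $x^l/\|x^l\|\to y/\|y\|$, in which the troublesome contribution $h(x^l)q(x^l)$ must be controlled through $\tilde h(y)=0$. Pointedness of $K$ and closedness of $S$ are precisely the hypotheses that make the normalization and limit extractions converge.
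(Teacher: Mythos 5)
Your argument is correct in substance but follows a genuinely different route from the paper's. The paper proves the identity by \emph{compactification}: using pointedness to fix $a$ with $a\tr x>0$ on $K\setminus\{0\}$, it maps $x\mapsto \frac{1}{1+a\tr x}(1,x)$, shows that the closure $\ns{S}$ of the image of $S$ is compact (Lemma~\ref{lem:barCompact}), invokes the known compact-case certificate (Proposition~\ref{th:compact}) on $\ns{S}$ to characterize $\closure(\P_d(S)+h(x)\R)$ in terms of homogenizations (Lemma~\ref{lem:barclosure}), and identifies the points at infinity $\{(0,x)\in\ns{S}\}$ with a normalized slice of $\rec{S}$ (Lemma~\ref{lem:condition}); this yields Theorem~\ref{thm:finerEq} for $\deg(h)=d$, and the general case follows by replacing $h$ with $h_1=h\cdot(1+a\tr x)^{d-\deg(h)}$ and its perturbations (Lemma~\ref{lem:jaja}). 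Your proof is the dual of this picture: your functionals $e^\infty_y$ are exactly evaluations at the paper's points at infinity $(0,y)$, your decomposition of $\P_d(S)^*=\closure\cone\{e_x:x\in S\}$ into point masses on $S$ plus masses at infinity on $\rec{S}$ is the dual counterpart of the paper's compactness lemmas (your sketch of it is sound; note that besides $\langle a,x\rangle^d$ you should also test against the constant $1\in\P_d(S)$ so that the masses $\lambda_{i,k}$ stay bounded on tracks where $x_{i,k}$ remains bounded), and your test polynomial $h\langle a,x\rangle^{d-\deg(h)}$ plays the role of the paper's $h_1$. What your route buys: it is self-contained, making no appeal to the compact-case result of \cite{PenaVZ08}, and it treats all degrees $\deg(h)\le d$ at once. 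What the paper's route buys: it recycles Proposition~\ref{th:compact} and avoids having to prove the dual decomposition.

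The one real issue is the degree bookkeeping you flag at the end, and there your diagnosis is wrong in an instructive way: no ``more delicate limiting argument'' can close that gap, because with $\tilde p$ read literally as the highest-degree homogeneous component of $p$ (the definition stated in the paper), the inclusion of $\closure(\P_d(S)+h(x)\R_{d-\deg(h)}[x])$ in the right-hand side of \eqref{eq:finerGen} is \emph{false} for polynomials of degree less than $d$. Take $n=2$, $K=S=\R^2_+$, $d=4$, $h(x)=(x_1x_2+1)(x_1-x_2)^2\in\P_4(S)$ as in Example~\ref{ex:couterInit}, and $p(x)=x_2-x_1$. On $\R^2_+$ we have $h(x)\ge(x_1-x_2)^2$, so writing $u=x_1-x_2$, for every $\eps>0$,
\[
p(x)+\eps+\tfrac{1}{4\eps}h(x)\;\ge\;-u+\eps+\tfrac{u^2}{4\eps}\;=\;\tfrac{(u-2\eps)^2}{4\eps}\;\ge\;0 \quad\text{on } \R^2_+,
\]
hence $p+\eps\in\P_4(S)+h(x)\R$ for every $\eps>0$, and so $p\in\closure(\P_4(S)+h(x)\R)$. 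Yet $\tilde p=x_2-x_1$, while $(1,0)\in\rec{S}\cap\tilde h^{-1}(0)$ and $\tilde p(1,0)=-1<0$, so $p$ does not belong to the right-hand side of \eqref{eq:finerGen} as literally written.

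The statement is therefore only correct when $\tilde p$ is interpreted as the degree-$d$ homogeneous component of $p$ (identically zero whenever $\deg(p)<d$) --- which is precisely the quantity your functionals $e^\infty_y$ evaluate. Under this reading your proof is already complete: the forward inclusion goes through verbatim, and the reverse inclusion needs nothing beyond Lemma~\ref{lem:subset} and the observation $e^\infty_y\in L^*$, since for $\deg(p)<d$ the condition at infinity is vacuous. The paper shares the same subtlety at exactly the same spot: Lemma~\ref{lem:tilde+}\eqref{it:2} is stated only for $p$ of degree $d$, yet the step $0\le \ns{p}(y)=\tilde p\left(x/(a\tr x)\right)$ in the proof of Theorem~\ref{thm:finerEq} applies it to a $p$ that may have degree less than $d$; that step, too, is valid only under the degree-$d$ reading. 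In short: your proof is right, your closing worry is misplaced, and what you have located is a (repairable) imprecision in the statement rather than a gap in your argument.
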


Notice that if $p(x)$ bounded below on $S$, Proposition~\ref{prop:inf} implies $\tilde p(x) \in \P_d( \rec{S}) \subseteq \P_d( \rec{S} \cap \tilde h^{-1}(0))$. Thus, for polynomials bounded below on $S$,
\begin{equation}\label{eq:result}
p(x) \in \P_d(S\cap h^{-1}(0)) \text{ if and only if }p(x) \in \closure(\P_d(S) + h(x)\R_{d-\deg(h)}[x]).
\end{equation}
In particular, if $p(x)$ is coercive, i.e. if $\|x\| \rightarrow \infty$ implies $p(x) \rightarrow \infty$, we have that \eqref{eq:result} follows \citep[cf.,][]{JeyaLL13}.
We are interested on which conditions on $S$ and $h$ would imply that \eqref{eq:result} holds for all polynomials.
From Theorem~\ref{thm:finerEqGen}, and Proposition~\ref{prop:inf},  it follows that the condition $\rec{(S \cap h^{-1}(0))} = \rec{S} \cap \tilde h^{-1}(0)$ implies $\P_d(S\cap h^{-1}(0)) = \closure(\P_d(S) + h(x)\R_{d-\deg(h)}[x])$.
Theorem~\ref{main.thm.equ} below shows that this condition is not only sufficient but also necessary to obtain
the desired characterization of non-negative certificates for polynomials over potentially unbounded sets.

\begin{theorem}\label{main.thm.equ}
Assume $K \subseteq \R^n$ is a closed convex pointed cone.  Let $S\subseteq K$ be a closed set and $h\in \P_d(S)$
with $d \ge 2$. Then
\begin{equation}\label{equal.identity}
\P_d(S\cap h^{-1}(0)) = \closure(\P_d(S) + h(x)\R_{d-\deg(h)}[x]),
\end{equation}
if and only if
\begin{equation}\label{eq.condition}
\rec{(S \cap h^{-1}(0))} = \rec{S} \cap \tilde h^{-1}(0).
\end{equation}
\end{theorem}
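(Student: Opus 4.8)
The plan is to read both implications off the characterization in Theorem~\ref{thm:finerEqGen}. Set $T:=S\cap h^{-1}(0)$. By that theorem the right-hand side of~\eqref{equal.identity} equals $\{p\in\P_d(T):\tilde p\in\P_d(\rec{S}\cap\tilde h^{-1}(0))\}$, which is automatically contained in $\P_d(T)$; hence~\eqref{equal.identity} holds if and only if \emph{every} $p\in\P_d(T)$ has $\tilde p\in\P_d(\rec{S}\cap\tilde h^{-1}(0))$. Next I would record that the inclusion $\rec{T}\subseteq\rec{S}\cap\tilde h^{-1}(0)$ always holds: $\rec{T}\subseteq\rec{S}$ is immediate from Definition~\ref{def:horizon}, and if $y\in\rec{T}$ is witnessed by $x^k\in h^{-1}(0)$, $\lambda^k\downarrow 0$, $\lambda^k x^k\to y$, then $\tilde h(y)=\lim_k(\lambda^k)^{\deg(h)}\tilde h(x^k)$; since $h(x^k)=0$ the form $\tilde h(x^k)$ equals minus the lower-degree part of $h$ at $x^k$, and multiplying a term of degree $j<\deg(h)$ by $(\lambda^k)^{\deg(h)}$ leaves a factor $(\lambda^k)^{\deg(h)-j}\to 0$, so $\tilde h(y)=0$. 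Thus condition~\eqref{eq.condition} is equivalent to the single reverse inclusion $\rec{S}\cap\tilde h^{-1}(0)\subseteq\rec{T}$, and the theorem reduces to matching this inclusion with the ``every $p$'' statement.

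The implication~\eqref{eq.condition}$\Rightarrow$\eqref{equal.identity} is then immediate: if $\rec{S}\cap\tilde h^{-1}(0)=\rec{T}$ then $\P_d(\rec{S}\cap\tilde h^{-1}(0))=\P_d(\rec{T})$, while Proposition~\ref{prop:inf} applied to the closed set $T$ gives $\tilde p\in\P_d(\rec{T})$ for every $p\in\P_d(T)$, which is exactly the ``every $p$'' statement.

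The substance is the converse, which I would prove by contraposition. Assuming $\rec{S}\cap\tilde h^{-1}(0)\not\subseteq\rec{T}$, I must produce $p\in\P_d(T)$ with $\tilde p(y_0)<0$ for a chosen $y_0\in(\rec{S}\cap\tilde h^{-1}(0))\setminus\rec{T}$; since $T\neq\emptyset$ forces $0\in\rec{T}$, this $y_0$ is nonzero. Here I exploit that $K$ is pointed: there is $c\in\int K^\ast$, the interior of the dual cone, with $\ip{c}{x}>0$ on $K\setminus\{0\}$ and $\ip{c}{x}\ge c_K\|x\|$ on $K$ for some $c_K>0$, and the base $B:=\{x\in K:\ip{c}{x}=1\}$ is compact; normalize $\ip{c}{y_0}=1$. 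Because $y_0\notin\rec{T}$ and $\rec{T}\cap B$ is compact, $\delta:=\operatorname{dist}(y_0,\rec{T}\cap B)>0$ (set $\delta:=1$ if $\rec{T}\cap B=\emptyset$). Homogenizing $\|x-y_0\|^2-\delta^2/4$ against $\ip{c}{x}$ gives the quadratic form $Q(x)=\|x\|^2-2\ip{x}{y_0}\ip{c}{x}+(\|y_0\|^2-\delta^2/4)\ip{c}{x}^2$, with $Q(y_0)=-\delta^2/4<0$ and, since $z/\ip{c}{z}\in\rec{T}\cap B$ for $z\in\rec{T}\setminus\{0\}$, with $Q>0$ on $\rec{T}\setminus\{0\}$. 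Setting $F:=\ip{c}{x}^{\,d-2}Q(x)$ (this is where $d\ge2$ enters) yields a degree-$d$ form with $F(y_0)<0$ and $F>0$ on $\rec{T}\setminus\{0\}$; $F$ will be the leading form $\tilde p$.

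It remains to correct $F$ on the bounded part of $T$ without disturbing its leading form, and this lifting is the main obstacle. I would take $p:=F+\mu\,\ip{c}{x}^{\,d-1}$ for a large constant $\mu\ge0$: the added term is $\ge0$ on $K\supseteq T$, so it only helps, and its degree $d-1<d$ keeps $\tilde p=F$. Two estimates finish the argument. First, a limiting argument (any unbounded sequence in the closed set $T$ has its directions accumulating in $\rec{T}$, where $F$ is bounded below by a positive constant on the unit sphere) gives $F(x)\ge\gamma\|x\|^d$ on $T\cap\{\|x\|\ge R\}$ for some $\gamma,R>0$, whence $p\ge F>0$ there. Second, on the compact remainder $T\cap\{\|x\|\le R\}$ the bound $-F(x)/\ip{c}{x}^{\,d-1}\le (C_F/c_K^{\,d-1})\|x\|\le (C_F/c_K^{\,d-1})R$ (using $\ip{c}{x}\ge c_K\|x\|$ and $|F|\le C_F\|x\|^d$) shows a finite $\mu$ forces $p\ge0$. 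Then $p\in\P_d(T)$ while $\tilde p(y_0)=F(y_0)<0$ and $y_0\in\rec{S}\cap\tilde h^{-1}(0)$, so by Theorem~\ref{thm:finerEqGen} $p\notin\closure(\P_d(S)+h(x)\R_{d-\deg(h)}[x])$ and~\eqref{equal.identity} fails, completing the contrapositive. The delicate point throughout is building one polynomial that is genuinely non-negative on the possibly unbounded $T$ yet whose top-degree form detects the spurious horizon direction $y_0$; the pointed-cone functional $c$ and the hypothesis $d\ge2$ are exactly what make both the separating form $F$ and its non-negative lift $p$ available.
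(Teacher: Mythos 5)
Your proof is correct, and while its first half coincides with the paper's, the converse direction takes a genuinely different route. The forward implication is identical: Theorem~\ref{thm:finerEqGen} plus Proposition~\ref{prop:inf}. For the converse, the paper (Section~\ref{sec:counter}) also builds an explicit witness polynomial, but it does so inside the compactification machinery of Section~\ref{sec:proofs}: with $s$ playing the role of your $y_0$ (normalized so that $a\tr s=1$), it sets $\epsilon:=\operatorname{dist}\bigl((0,s),\,\ns{S\cap h^{-1}(0)}\bigr)>0$ --- a distance measured in $\R^{n+1}$ to the compactified set, which is compact by Lemma~\ref{lem:barCompact} --- and takes $p(x)=(1+a\tr x)^{d-2}\bigl(1+\|x-(1+a\tr x)s\|^2-\epsilon^2(1+a\tr x)^2\bigr)$. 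Non-negativity of $p$ on $S\cap h^{-1}(0)$ is then a one-line consequence of the identity $\ns{p}(\ns{x})=\|\ns{x}-(0,s)\|^2-\epsilon^2$, because the compactified distance controls the finite and the infinite parts of the set simultaneously; exclusion from the closure is obtained by a small perturbation argument at $(0,s)$ rather than by invoking Theorem~\ref{thm:finerEqGen} a second time (your way of concluding also works, and is arguably cleaner given that the theorem is available). Your construction instead stays in $\R^n$: your $\delta$ measures distance only among horizon directions on the base $B$, which certifies positivity of the leading form $F$ on $\rec{T}\setminus\{0\}$ but says nothing about the bounded part of $T$; that is exactly why you need the extra lower-order term $\mu\ip{c}{x}^{d-1}$ and the two-regime estimate, which play the role of the constant term $1$ inside the paper's quadratic. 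What the paper's route buys is that a single distance does all the work, so the counterexample is short given the Section~\ref{sec:proofs} lemmas; what your route buys is self-containedness, since beyond Theorem~\ref{thm:finerEqGen} itself you use none of that machinery, at the price of the hands-on growth estimates (which are correct as sketched: the contradiction argument along normalized unbounded sequences in $T$ does yield $F(x)\ge\gamma\|x\|^d$ for large $\|x\|$). One caveat you share with the paper: both arguments require a nonzero witness (you normalize $\ip{c}{y_0}=1$; the paper divides by $a\tr s$), which is automatic when $S\cap h^{-1}(0)\neq\emptyset$ (as you note, $0\in\rec{T}$ then), but silently excludes the degenerate case $S\cap h^{-1}(0)=\emptyset$, where under the paper's convention $\rec{\emptyset}=\emptyset$ the only available witness may be $y_0=0$; that case needs, and admits, a separate trivial treatment, and the paper's own proof glosses over it as well.
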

\begin{proof}
To show that~\eqref{eq.condition} implies \eqref{equal.identity}, first notice that
from Theorem~\ref{thm:finerEqGen} we have
\begin{align*}
&\closure(\P_d(S) + h(x)\R_{d-\deg(h)}[x]) & \\
&= \{p(x) \in \P_d(S\cap h^{-1}(0)): \tilde p(x) \in \P_d( \rec{S} \cap \tilde h^{-1}(0))\} & \\
& = \{p(x) \in \P_d(S\cap h^{-1}(0)): \tilde p(x) \in \P_d(\rec{(S \cap h^{-1}(0))})\} & \text{(by \eqref{eq.condition})}\\
& = \P_d(S\cap h^{-1}(0)) & \text{(by Proposition~\ref{prop:inf})}.
\end{align*}
The fact that \eqref{equal.identity} implies~\eqref{eq.condition} follows from the counterexample detailed in Section~\ref{sec:counter}, which is a generalized version of Example~\ref{ex:couterInit}.
\end{proof}

\begin{example}[Example~\ref{ex:couterInit} revisited]
\label{ex:cont}
Note that condition \eqref{eq.condition} is not satisfied in the case of
$S= \R^2_+, h(x) = (x_1x_2+1)(x_1-x_2)^2$ considered in Example~\ref{ex:couterInit}. Specifically,
$\rec{(S\cap h^{-1}(0))} = \{(t,t): t \ge 0\} \subsetneq \rec{S} \cap \widetilde{h}^{-1}(0) = \{ (t,t), (0,t), (t,0): t \ge 0\}$.
\end{example}

As shown in Section~\ref{sec:extensions}, the conditions of Theorem~\ref{main.thm.equ} can be relaxed as long as either the number of variables or the degree of the polynomials involved in the certificate are appropriately increased.

Note that unlike related non-negativity certificates, in which an appropriate quadratic module (or {\em pre-order}~\cite[cf.,][]{Schm91}) is
used to certify the non-negativity of a polynomial, the non-negativity certificate in Theorem~\ref{main.thm.equ}
exploits the structure of the underlying set over which the non-negativity
of a polynomial is desired to be certified by allowing to extend any non-negativity certificate over the set $S$ to the set $S \cap h^{-1}(0)$.
Also,
the certificate of non-negativity is recursive in nature. That is, it can
be applied repeatedly to address the complexity of certifying the
non-negativity of a polynomial over a given semi-algebraic set.
This fact will be used in Section~\ref{sec:balls} to derive new
interesting LMI hierarchies for PO problems.

Also, notice that unlike certificates of non-negativity based on the use
of a quadratic module (or pre-order), the degree of
the polynomials used to certify non-negativity is known a priori, and
equal to the degree of the polynomial whose
non-negativity is to be certified.
This behaviour
of the degree of the polynomials involved in the proposed
certificate of non-negativity
have
proved to be useful in the related work~\cite{PenaVZ08, GhadAV11, PenaVZ15},
when the underlying set is compact. Here, this fact will be key to
derive some of the relevant consequences of the non-negative certificate in Theorem~\ref{main.thm.equ} presented therein.

It is important to mention that \citet[][Thm. 4.2]{putinar1999} use conditions over the natural homogenization of relevant polynomials to obtain certificates of non-negativity of a polynomial over general semialgebraic sets. Although homogenization will play a key role in deriving Theorem~\ref{thm:finerEqGen} and~\ref{main.thm.equ}, the conditions under which these results hold are characterized (among others) by the homogeneous component of largest degree of a relevant polynomial. Moreover, unlike \cite[Thm. 4.2]{putinar1999}, Theorem~\ref{main.thm.equ} leads to a rich class of different classes of LMI hierarchies that can be used to certify the non-negativity of a polynomial.

\subsection{Certifying non-negativity in a non-Archimedean case}
\label{sec:Lasserre}
One of the key properties of the certificate of non-negativity provided in
Theorem~\ref{main.thm.equ}
is that the Archimedean property is not used to characterize the cases in which
the non-negativity certificate can be applied. Recall that the Archimedean
property~\citep[cf.,][]{PresD01}
is used in Putinar's Positivstellensatz~\citep[cf.,][]{Puti93} to certify the non-negativity of a polynomial
on compact semialgebraic sets. Recently,  \citet[][]{JeyaLL13} obtained a
certificate of non-negativity of polynomials over possibly non-compact
semialgebraic that holds whenever a suitable quadratic module is Archimedean.
 This result can be
used to obtain hierarchy of LMI approximations for general PO problems
with a possibly unbounded feasible set.
As Proposition \ref{prop:prelasserre} below illustrates, the certificate of non-negative stated in
Theorem~\ref{main.thm.equ} can be used to obtain an LMI reformulation
for PO problems in which the Archimedean property
is not required.

First we state the following lemma, which is used in the proof of Proposition~\ref{prop:prelasserre}.
\begin{lemma}\label{lem:lintransf} Let $A \in \R^{n\times n}$ be an invertible matrix. Let $b \in \R^n$. Let $S \subset \R^n$ be given. Then for any $p(x) \in \R[x]$ and $d>0$, $p(x) \in \P_d(S)$ if and only if $p(A^{-1}(x-b)) \in P_d(AS + b)$.
\end{lemma}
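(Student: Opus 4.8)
The plan is to reduce the statement to two independent facts: an invertible affine change of variables preserves total degree, and it transports non-negativity of $p$ on $S$ to non-negativity of the composed polynomial on the image $AS+b$. Set $T(x) := Ax + b$, an affine bijection of $\R^n$ with inverse $T^{-1}(y) = A^{-1}(y-b)$, and write $q(x) := p(A^{-1}(x-b)) = p(T^{-1}(x))$. First I would verify that $\deg q = \deg p$, so that $p \in \R_d[x]$ if and only if $q \in \R_d[x]$. Writing $p = \tilde p + r$ with $\tilde p$ the top homogeneous component and $\deg r < \deg p$, the substitution $x \mapsto A^{-1}(x-b)$ produces from $\tilde p$ a degree-$\deg p$ part equal to $\tilde p(A^{-1}x)$; since $A^{-1}$ is invertible and $\tilde p \neq 0$, this homogeneous polynomial is nonzero, and no higher-degree terms are created, so $\deg q = \deg p$. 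Applying the same reasoning to $p = q \circ T$ confirms the degrees coincide, hence the degree constraints on the two sides match.

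Next I would handle non-negativity by a direct change of variables. Since $T$ is a bijection, $AS + b = T(S)$ and $T^{-1}(AS+b) = S$; thus $q(y) \ge 0$ for all $y \in AS+b$ is equivalent to $p(T^{-1}(y)) \ge 0$ for all $y \in T(S)$, which, as $T^{-1}$ maps $T(S)$ bijectively onto $S$, is precisely $p(x) \ge 0$ for all $x \in S$. Combined with the degree identity, this gives $p \in \P_d(S)$ if and only if $q \in \P_d(AS+b)$.

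The only step requiring care is the degree preservation, where invertibility of $A$ is essential: without it the leading homogeneous component could collapse and $q$ could have strictly smaller degree than $p$, breaking the equivalence of membership in $\R_d[x]$ on the two sides. The non-negativity equivalence is otherwise immediate from bijectivity of the affine map, so I expect no substantial obstacle.
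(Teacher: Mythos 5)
Your proof is correct. The paper itself states Lemma~\ref{lem:lintransf} without proof, treating it as a routine fact, and your argument supplies exactly the verification that is implicitly assumed: an invertible affine substitution $x \mapsto A^{-1}(x-b)$ preserves total degree (the top homogeneous component $\tilde p$ maps to the nonzero homogeneous polynomial $\tilde p(A^{-1}x)$, which is where invertibility is genuinely needed), and bijectivity of $T(x)=Ax+b$ transports non-negativity on $S$ to non-negativity on $AS+b$. Both halves are handled correctly, so there is no gap.
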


\begin{proposition}
\label{prop:prelasserre}
Let $q(x)$ be a concave quadratic, and let $h(x)$ be a non-negative quadratic such that
$U = \{x \in \R^n: q(x) \ge 0,  h(x) = 0\} \neq \emptyset.$
If $\{x \in \R^n: q(x) \ge 0\}$ contains no lines, then
\[
\P_2(U) = \closure\left\{\sigma(x) + \lambda q(x) + \mu h(x) : \lambda \in \R_+,
\mu \in \R, \sigma \in \Sigma_2\right\}.
\]
\end{proposition}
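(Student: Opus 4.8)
The plan is to apply Theorem~\ref{main.thm.equ} to the closed set $S:=\{x\in\R^n:q(x)\ge 0\}$ and the polynomial $h$, with $d=2$, and then substitute the explicit $S$-Lemma description of $\P_2(S)$. Note first that $\deg(h)=2=d$, so the ideal term in \eqref{equal.identity} collapses to $h(x)\R_0[x]=\{\mu h(x):\mu\in\R\}$, which is exactly the $\mu h$ contribution in the claimed certificate; the $\sigma+\lambda q$ part will be produced by the description of $\P_2(S)$. Throughout I write $U=S\cap h^{-1}(0)$.

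To invoke Theorem~\ref{main.thm.equ} I must first place $S$ inside a pointed closed convex cone. Since $q$ is concave, $S$ is closed and convex, and since $h\ge 0$ on $\R^n$ we have $h\in\P_2(\R^n)\subseteq\P_2(S)$, so the hypothesis $h\in\P_2(S)$ is immediate. The hypothesis that $\{q\ge 0\}$ contains no lines guarantees that the horizon cone $\rec{S}$ is pointed; translating $S$ far enough along a nonzero recession direction (an affine change of variables of the type covered by Lemma~\ref{lem:lintransf}) makes $\closure(\cone S)$ pointed, and I take $K$ to be this cone. Because Lemma~\ref{lem:lintransf} transforms the certificate covariantly and the classes $\Sigma_2$, concave quadratics, and non-negative quadratics are all preserved under affine substitution, I may assume without loss of generality that $S\subseteq K$ with $K$ pointed.

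Next I verify condition \eqref{eq.condition}. As $h\ge 0$, every $x_*\in h^{-1}(0)\supseteq U$ is a global minimizer of $h$, so $\nabla h(x_*)=0$ and $h(x_*+v)=\tilde h(v)$; hence $h^{-1}(0)=x_*+\tilde h^{-1}(0)$ is an affine subspace whose recession space is exactly $\tilde h^{-1}(0)$. Since $S$ and $h^{-1}(0)$ are closed convex sets with $U\neq\emptyset$, the horizon cone of their intersection equals the intersection of their horizon cones \citep[cf.,][]{RockW98}, so $\rec{(S\cap h^{-1}(0))}=\rec{S}\cap\tilde h^{-1}(0)$, which is \eqref{eq.condition}. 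Theorem~\ref{main.thm.equ} then gives $\P_2(U)=\closure(\P_2(S)+\R h)$. Finally, the $S$-Lemma yields $\P_2(S)=\closure\{\sigma+\lambda q:\sigma\in\Sigma_2,\ \lambda\in\R_+\}$: when $\{q>0\}\neq\emptyset$ the closure is superfluous, since then $p\in\P_2(S)$ forces $p-\lambda q$ to be a non-negative, hence sum-of-squares, quadratic for some $\lambda\ge 0$, and the closure is only needed to absorb the degenerate non-Slater case (via a limiting argument letting $\lambda\to\infty$). Substituting this into $\P_2(U)=\closure(\P_2(S)+\R h)$ and using $\closure(\closure(C)+\R h)=\closure(C+\R h)$ with $C=\Sigma_2+\R_+ q$ produces the claimed identity.

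I expect the main obstacle to be the pointed-cone reduction in the second step: showing that a line-free closed convex set can, after an affine change of coordinates, be enclosed in a pointed closed convex cone, and checking that this substitution leaves the form $\sigma+\lambda q+\mu h$ of the certificate intact through Lemma~\ref{lem:lintransf}. By contrast, condition \eqref{eq.condition} is essentially free once convexity is exploited, and the $S$-Lemma step is classical, its only subtlety being the closure in the non-Slater case.
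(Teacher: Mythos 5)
Your proposal follows the paper's skeleton—apply Theorem~\ref{main.thm.equ} with $d=2$ (so the ideal term collapses to $\mu h$) and then substitute the Sturm--Zhang description of $\P_2(S)$—but you execute the two nontrivial hypotheses in a genuinely different way. The paper does not use any general convex-geometric enclosure: it applies Lemma~\ref{lem:lintransf} repeatedly to bring $q$ into the canonical form $q(x)=x_{m+1}-\tfrac14-\sum_{i=1}^m x_i^2$ (the no-lines hypothesis is used exactly once, to force $n=m+m_1$, i.e.\ to rule out free linear variables; the case $m_1=0$ is compact and is dispatched by Proposition~\ref{th:compact}), and then observes that $S=\{q\ge0\}$ sits inside the second-order cone $\socp$, which serves as the pointed cone $K$. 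For condition~\eqref{eq.condition} the paper proves Proposition~\ref{prop:infconds} by explicit computation: it shows $\rec{S}\subseteq\{0\}^m\times\R_+$, writes $h=\|Ax+x_{m+1}a-b\|^2$, and splits into the cases $a\ne0$ and $a=0$. Your replacements—enclose a translate of $S$ in its own closed conic hull, and verify \eqref{eq.condition} by noting that $h^{-1}(0)$ is an affine subspace with recession space $\tilde h^{-1}(0)$ and invoking the recession-cone-of-intersection theorem for closed convex sets with a common point—are coordinate-free; the second one in particular is cleaner and more general than Proposition~\ref{prop:infconds}, and it is correct as stated (for closed convex sets the horizon cone is the recession cone, and the intersection formula is classical). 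Your treatment of the $S$-Lemma step, keeping the closure to cover the non-Slater case, is if anything more careful than the paper's.

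The one genuine gap is the step you yourself flag: you assert, but do not prove, that a line-free closed convex set can be moved by an affine change of variables into a pointed closed convex cone. The claim is true, and the missing argument is short. Pick $c$ with $0\notin T:=S+c$; if $S$ is unbounded you may indeed take $c=tr$ with $r\in\rec{S}\setminus\{0\}$ and $t$ large, since $-tr\in S$ for all $t>0$ combined with $S+\rec{S}=S$ would put the whole line $\R r$ inside $S$. For closed $T$ with $0\notin T$ one has $\closure(\cone T)=\cone T\cup\rec{T}$: split a convergent sequence $\lambda_k x_k$, $x_k\in T$, according to whether $\|x_k\|$ stays bounded (possible because $0\notin T$ keeps $\|x_k\|$ bounded away from $0$) or tends to infinity. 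If this cone contained $\pm y$ with $y\ne0$, then either (a) $y,-y\in\cone T$, in which case convexity places $0$ on a segment between two points of $T$; or (b) $y,-y\in\rec{T}=\rec{S}$, contradicting pointedness of $\rec{S}$; or (c) $y=\lambda x$ with $x\in T$, $\lambda>0$, and $-y\in\rec{T}$, whence $-x\in\rec{T}$ and $0=x+(-x)\in T+\rec{T}\subseteq T$. All three cases contradict $0\notin T$ or line-freeness, so $K:=\closure(\cone T)$ is pointed and contains $T$. Note also that your recipe silently assumes $\rec{S}\neq\{0\}$; when $\rec{S}=\{0\}$ the set $S$ is compact and you should either translate in an arbitrary direction off the origin or simply fall back on Proposition~\ref{th:compact}, as the paper does. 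With this lemma supplied and that degenerate case noted, your proof is complete.
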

\begin{proof}
As $q(x)$ is concave, write $q(x) = c_0 + c\tr x - x\tr C x$ where $C \succeq 0$.  Assume $C$ is of rank $m$, and write $C = Q\tr \Lambda Q$, where $Q$ is a orthogonal matrix and $\Lambda$ is the diagonal matrix of eigenvalues of $C$, such that $\Lambda_{1,1} \ge \cdots \ge \Lambda_{m,m} > 0 = \Lambda_{m+1,m+1} = \cdots = \Lambda_{nn}.$ From Lemma~\ref{lem:lintransf}, it is enough to prove the statement for the pair $q(Q\tr x)$ and $h(Q\tr x)$ i.e. we can assume
\[q(x) = a + b\tr x - x\tr \Lambda x = a + \sum_{i=1}^{m} \frac{b_i^2}{4\Lambda_{ii}} + \sum_{i=m+1}^n b_i x_i - \sum_{i=1}^{m} \left( \frac{b_i}{2\Lambda_{ii}^{1/2}} - \Lambda_{ii}^{1/2}x_i \right)^2.\]
Let $m_1$ be the number of non-zeros in the vector $(b_{m+1},\dots,b_n)$.
 Using Lemma~\ref{lem:lintransf} again
 \[q(x) = a  + \sum_{i=m+1}^{m+m_1} x_i - \sum_{i=1}^{m} x_i^2.\]
If $m_1>1$, we use Lemma~\ref{lem:lintransf} with the affine transformation
 \[x \mapsto \left(x_1, \dots, x_m , a +1/4 + \sum_{i=m+1}^{m+m_1} x_i , x_{m+2},\dots,x_n \right),\]
to obtain that in this case we can assume $m_1=1$ and $a = -1/4$.

As $S = \{x \in \R^n: q(x) \ge 0\}$ contains no lines, and lines are send to lines by affine transformations, we obtain $n = m + m_1$.

Now we claim,
\begin{equation}\label{eq:ladesiempre}
\P_2(U) = \closure (\P_2(S) + h(x)\R),
\end{equation}
and since $S$ is non-empty, from the regularized $S$-Lemma \citet{SturZ01} we have that $\P_2(S) =  \{\sigma(x) + \lambda q(x):
\lambda \in \R_+, \sigma \in \Sigma_2\}$. The result follows after replacing this characterization of $\P_2(S)$ in~\eqref{eq:ladesiempre}.

To show the claim, notice that if $m_1$ = 0, $S$ is compact and \eqref{eq:ladesiempre} follows from Theorem~\ref{th:compact}. Thus, assume $m_1 = 1$, i.e. $n = m+1$. Notice that $x_{m+1} \ge 1/4 +\|x\|^2$ implies $x_{m+1}^2 = (x_{m+1} - 1/2)^2 + x_{m+1} - 1/4 \ge \|x\|^2$ and $x_{m+1} \ge 0$, that is (as illustrated in Figure~\ref{fig:sturm}) $S \subseteq \socp$, where $\socp = \{ (x,x_{m+1}) \in \R^{m+1}: x_{m+1} \ge \|x\|_2\}$  is the {\em second-order (Lorentz) cone}
\cite[cf.,][]{LoboVBL98}, which is a pointed cone.
% Thus $S$ satisfies the condition $S \subset K$, where $K$ is a closed convex pointed cone in Theorem~\ref{main.thm.eq_1u}.
By assumption, $h(x) \in P_2(\R^n) \subset \P_2(S)$. Also, condition~\eqref{eq.condition} is satisfied as shown in Proposition~\ref{prop:infconds} in Section~\ref{sec:infconds}. Hence  Theorem~\ref{main.thm.equ} yields \eqref{eq:ladesiempre}.
\end{proof}

Now consider the problem of certifying the non-negativity of the polynomial $p(x,x_{m+1}) = x_{m+1}\sum_{i=1}^m x_i$ on $U = \{(x,x_{m+1}) \in \R^{m+1}: x_{m+1} \ge \frac{1}{4} + \sum_{i=1}^m x_i^2,  \sum_{i=1}^m x_i^2 = 0\}.$
As Illustrated in Figure~\ref{fig:lasserre}, because
the set $U$ is unbounded a certificate of non-negativity for $p(x,x_m)$ on~$U$ based on Putinar's Positivstellensatz~\citep[cf.,][]{Puti93} is not
guaranteed to exist, as the fact that $U$ is unbounded implies that its underlying associated quadratic module is not Archimedean.
Also, as illustrated in Figure~\ref{fig:lasserre}, a certificate of non-negativity for $p(x,x_{m+1})$ on~$U$ based on \citet[][Corollary 3.1]{JeyaLL13}
is not guaranteed to exist. To see this, note that~$c$ in \citet[][Corollary 3.1]{JeyaLL13} must satisfy $c > p(x',x'_{m+1})$ for some $(x',x'_{m+1}) \in U$, thus $c >0$. But for any $c > 0$, the set
$U \cap \{(x_0, x) \in \R^{n+1}: c > p(x_0, x)\} = \{(x_0, x) \in \R^{n+1}: x_0 > \frac{1}{2}\}$,
is unbounded. Thus, the quadratic module of $(x_{m+1} - \frac{1}{2}) - \sum_{i=1}^n x_i^2$,
$\sum_{i=1}^n x_i^2$, $c - x_{m+1}\sum_{i=1}^n x_i$ is not Archimedean for any $c > 0$. This is illustrated in Figure~\ref{fig:lasserre}. In contrast, the non-negativity for $p(x,x_{m+1})$ on $U$
can be certified by a LMI formulation using Proposition~\ref{prop:prelasserre}.

\begin{figure}[!htb]
\begin{center}
\begin{tikzpicture}[scale=.75]

  \draw[fill=gray!40!white,gray!40!white] plot[smooth,samples=100,domain=-2.243:2.243] (\x,\x*\x+0.25) --  (0,5.34);

  %\draw[color=gray!40!white, thick] (-2.2,5.34) -- (2.2,5.34);
  %\draw[smooth,samples=100,domain=-2.2:2.2, ultra thick, color = gray!40!white]
  %plot(\x,\x*\x+0.25);

  %level curves:
  \draw[smooth,samples=100,domain=0.3/5.43:4.9, thick, color = gray!80!white]
  plot(\x,0.3/\x);
  \draw[smooth,samples=100,domain=1.5/5.43:4.9, thick, color = gray!80!white]
  plot(\x,1.5/\x);
  \draw[smooth,samples=100,domain=3/5.43:4.9, thick, color = gray!80!white]
  plot(\x,3/\x);
   \draw[smooth,samples=100,domain=5/5.43:4.9, thick, color = gray!80!white]
  plot(\x,5/\x);
  \draw[smooth,samples=100,domain=8/5.43:4.9, thick, color = gray!80!white]
  plot(\x,8/\x);
  \draw[smooth,samples=100,domain=12/5.43:4.9, thick, color = gray!80!white]
  plot(\x,12/\x);
    \draw[->] (-0.3,0.25) -- (0.8,0.25); %arrow
  \node[right] at (0.8,0.25){$x_{m+1} = \frac{1}{4}$};
  \draw[-latex,color=black] (-5,0) -- (5,0);   % y-axis
  \draw[-latex,color=black] (0,-1) -- (0,5.75); %x-axis

 \node[right] at (5,0){$x$};
 \node[above] at (0,5.75){$x_{m+1}$};
 \draw[smooth,samples=100,domain=0:2.3, ultra thick, color = black]
  plot(0,\x*\x+0.25);

  %soc:
  \draw[smooth,samples=100,domain=-0:4.9, thick, color = black, dashed]
  plot(\x,\x);
   \draw[smooth,samples=100,domain=-0:-4.9,  thick, color = black, dashed]
  plot(\x,-\x);
\end{tikzpicture}
\end{center}

\caption{ The set $S = \{(x,x_{m+1}) \in \R^{2}: q(x,x_{m+1}):= x_{m+1} \ge \frac{1}{4} - x^2 \ge 0\}$ is illustrated in light grey, while the bold line
illustrates the set
$S \cap q^{-1}(0)$. The dotted line illustrates the second-order cone.  The
grey curves illustrate the level sets of $p(x,x_{m+1}) = x_{m+1}x$.}
\label{fig:lasserre}
\label{fig:sturm}
\end{figure}
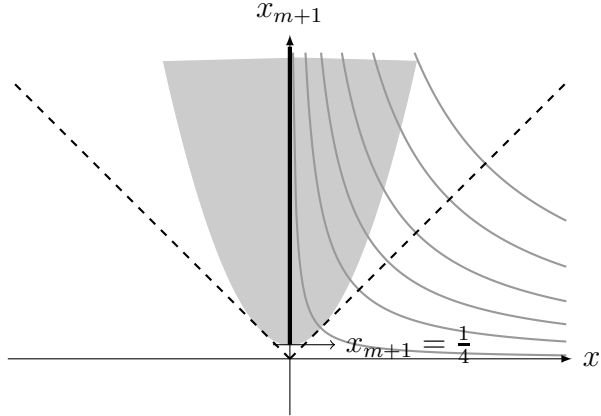

\section{Inequality extension}
\label{sec:inequalities}
In this section,
%we further take advantage of the fact that Theorem~\ref{main.thm.equ} does not require the polynomials defining the underlying set of interest to satisfy the Archimedean property (cf., Definition~\ref{def:qm}). For that purpose, we first
we provide an extension of Theorem~\ref{main.thm.equ}
to cases in which the polynomial $h$ defines an inequality instead of an equality.
That is, the set over which the non-negativity of a polynomial is certified is of
the form $S \cap h^{-1}(\R_+)$, where $S \subseteq \R^n$, $h \in \R_d[x]$, and
\[
h^{-1}(\R_+):= \{ x \in \R^n: h(x) \ge 0\}.
\]

\begin{corollary}\label{col:finerIneq}
Assume $K\subseteq \R^n$ is a pointed closed convex cone. Let $S\subseteq K$ be a closed set and
let~$h \in \R[x]$ with $\deg(h)\le d/2$.
If $p(x) \in \P_{d}(S\cap h^{-1}(\R_+))$ and
$\tilde p(x) \in \P_{d}( \rec{S} \cap \tilde h^{-1}(\R_+))$, then
$p(x) \in \closure(\{F(x,h(x)): F(x,u) \in \P_{d}(S\times \R_+))$.
\end{corollary}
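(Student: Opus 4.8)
The plan is to reduce this inequality statement to the equality result Theorem~\ref{thm:finerEqGen} by lifting the problem to $\R^{n+1}$ and encoding the constraint $h(x)\ge 0$ with a slack variable $u$. Concretely, I would introduce the variable $u$, the set $\hat S:=S\times\R_+\subseteq\R^{n+1}$, the cone $\hat K:=K\times\R_+$, and the polynomial $g(x,u):=(u-h(x))^2$. The cone $\hat K$ is again closed, convex and pointed (if $(y,t),-(y,t)\in K\times\R_+$ then $t=0$ and $y\in K\cap(-K)=\{0\}$), $\hat S\subseteq\hat K$ is closed, and since $g$ is a square with $\deg g=2\deg h\le d$ we have $g\in\P_d(\hat S)$; this is exactly where the hypothesis $\deg h\le d/2$ is used. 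The point of this choice is that $\hat S\cap g^{-1}(0)=\{(x,h(x)):x\in S,\ h(x)\ge0\}$ is the graph of $h$ over $S\cap h^{-1}(\R_+)$, so non-negativity on $S\cap h^{-1}(\R_+)$ corresponds to non-negativity of the lifted polynomial $\hat p(x,u):=p(x)$ on $\hat S\cap g^{-1}(0)$.

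Next I would apply Theorem~\ref{thm:finerEqGen} to $\hat K,\hat S,g$ and show that $\hat p$ belongs to the right-hand side of the resulting identity, i.e. that (a) $\hat p\in\P_d(\hat S\cap g^{-1}(0))$ and (b) $\tilde{\hat p}\in\P_d(\rec{\hat S}\cap\tilde g^{-1}(0))$. Part (a) is immediate: on the graph $\hat p(x,h(x))=p(x)\ge0$ because $p\in\P_d(S\cap h^{-1}(\R_+))$. For part (b) I would first record the homogenization facts $\rec{\hat S}=\rec S\times\R_+$ (a direct check from Definition~\ref{def:horizon}), $\tilde{\hat p}(x,u)=\tilde p(x)$, and $\tilde g=(\widetilde{u-h})^2$ (the top part of a square is the square of the top part).

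Then (b) reduces to showing that every $(x,u)\in\rec{\hat S}\cap\tilde g^{-1}(0)$ has $x\in\rec S\cap\tilde h^{-1}(\R_+)$, since $\tilde p$ depends only on $x$ and is non-negative there by hypothesis. This is where the degree of $h$ enters: writing $k=\deg h$, one has $\widetilde{u-h}=-\tilde h$ when $k\ge2$ (so $\tilde g^{-1}(0)=\{\tilde h=0\}\times\R$ and $\tilde h(x)=0$), and $\widetilde{u-h}=u-\tilde h$ when $k=1$ (so $\tilde g^{-1}(0)=\{u=\tilde h(x)\}$, whence $\tilde h(x)=u\ge0$ using $u\in\R_+$ from $\rec{\hat S}$); in both cases $\tilde h(x)\ge0$, so $x\in\rec S\cap\tilde h^{-1}(\R_+)$ and (b) follows (the constant case $k=0$ is immediate). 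I expect this homogenization/horizon bookkeeping to be the main obstacle, precisely because $\tilde g$ is not simply $(u-\tilde h)^2$ once $\deg h\ge2$, and the two sub-cases must be reconciled with the single hypothesis $\tilde p\in\P_d(\rec S\cap\tilde h^{-1}(\R_+))$.

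Finally, Theorem~\ref{thm:finerEqGen} then gives $\hat p\in\closure(\P_d(\hat S)+g\,\R_{d-\deg g}[x,u])$, say $\hat p=\lim_i(F_i+gG_i)$ with $F_i\in\P_d(\hat S)$. I would push this limit through the substitution map $\Phi:P(x,u)\mapsto P(x,h(x))$, which is linear on the finite-dimensional space $\R_d[x,u]$ and hence continuous, kills the ideal term because $g(x,h(x))=0$, and fixes $\hat p$ because $\hat p(x,h(x))=p(x)$. Thus $p(x)=\lim_i F_i(x,h(x))$ with each $F_i\in\P_d(S\times\R_+)$, which is exactly $p\in\closure\{F(x,h(x)):F(x,u)\in\P_d(S\times\R_+)\}$, as claimed.
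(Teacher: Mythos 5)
Your proposal is correct and is essentially the paper's own proof: the paper likewise lifts to $T=S\times\R_+$, encodes $h(x)\ge 0$ through a squared slack equation, proves the same horizon-cone inclusion $\rec{T}\cap\tilde{g}^{-1}(0)\subseteq(\rec{S}\cap\tilde h^{-1}(\R_+))\times\R_+$ by splitting into exactly the same cases $\deg(h)>1$ and $\deg(h)=1$, and concludes by substituting the slack variable with $h(x)$. The only (cosmetic) difference is that the paper takes $g(x,t)=(1+a\tr x)^{d-2\deg(h)}(h(x)-t)^2$ so that $\deg(g)=d$ and the scalar-multiplier version (Theorem~\ref{thm:finerEq}) applies, whereas you use the bare square $(u-h(x))^2$ together with Theorem~\ref{thm:finerEqGen}; in both arguments the ideal term vanishes under the substitution $u=h(x)$, so the proofs coincide in substance.
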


\begin{proof} Let $p \in \P_d(S\cap h^{-1}(\R_+))$ such that $\tilde p(x) \in \P_{d}( \rec{S} \cap \tilde h^{-1}(\R_+))$
be given. Now let $T = S\times \R_+$ and $g(x,t) = (1+a\tr x)^{d-2\deg(h)}(h(x)-t)^2$ where $a$ is defined by \eqref{eq:adef}.
Then $p(x) \in \P_d(T \cap g^{-1}(0))$. We claim that $\rec{T} \cap \tilde{g}^{-1}(0) \subseteq
(\rec{S} \cap \tilde h^{-1}(\R_+))\times \R_+$ and thus, by hypothesis, $\tilde p(x) \in \P_d(\rec{T} \cap \tilde{g}^{-1}(0))$. Therefore, from Theorem~\ref{thm:finerEq}, $p(x) = \lim_{i \rightarrow \infty} r_i(x,t) + g(x,t) q_i$ with $r_i(x,t) \in \P_d(T)$ and $q_i \in \R$. Taking $t = h(x)$ we obtain $p(x) = \lim_{i \rightarrow \infty} r_i(x,h(x)) \in \closure(\{F(x,h(x)): F(x,u) \in \P_d(S\times \R_+)\})$.
To finish the proof, we prove the claim.
First, assume that $\deg(h) > 1$.  In this case $\rec{T} \cap \tilde{g}^{-1}(0) = \rec{S}\times \R_+ \cap \{(x,t):\tilde h(x) = 0\}  = (\rec{S} \cap \tilde h^{-1}(0))\times \R_+ \subseteq
(\rec{S} \cap \tilde h^{-1}(\R_+))\times \R_+$. Second, assume that $\deg(h) = 1$.  In this case $\rec{T} \cap \tilde{g}^{-1}(0) = \rec{S}\times \R_+ \cap (\{(x,t):\tilde h(x) - t = 0\} \cup \{(0,t): t \in \R\})  \subseteq
(\rec{S} \cap \tilde h^{-1}(\R_+))\times \R_+$. \end{proof}

\begin{theorem}
\label{thm:ineq}
Assume $K\subseteq \R^n$ be a pointed closed convex cone. Let $S\subseteq K$ be a closed set and  $h \in \R[x]$ be such that $\deg h \le d/2$. If $\rec{(S \cap h^{-1}(\R_+))} = \rec{S} \cap \tilde h^{-1}(\R_+)$, then
$
 \P_d(S\cap h^{-1}(\R_+)) =  \closure(\{F(x,h(x)): F(x,u) \in \P_{d}(S\times \R_+)\}) \cap \R_d[x],
$
\end{theorem}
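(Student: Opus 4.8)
The plan is to establish the two inclusions of the claimed identity separately, treating the statement as the inequality analog of Theorem~\ref{main.thm.equ}. Since only one implication (condition $\Rightarrow$ identity) is asserted here, no counterexample construction is needed and the argument is correspondingly shorter than the one for Theorem~\ref{main.thm.equ}.

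First I would dispatch the inclusion $\closure(\{F(x,h(x)): F(x,u) \in \P_d(S \times \R_+)\}) \cap \R_d[x] \subseteq \P_d(S \cap h^{-1}(\R_+))$, which holds with no appeal to the horizon-cone condition and is the direct analog of Lemma~\ref{lem:subset}. If $p = \lim_i F_i(x,h(x))$ with each $F_i(x,u) \in \P_d(S \times \R_+)$, then for every $x \in S \cap h^{-1}(\R_+)$ the point $(x,h(x))$ lies in $S \times \R_+$, because $h(x) \ge 0$; hence $F_i(x,h(x)) \ge 0$ for all $i$, and passing to the limit gives $p(x) \ge 0$. As $x$ ranges over $S \cap h^{-1}(\R_+)$ this shows $p \in \P_d(S \cap h^{-1}(\R_+))$.

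The substantive inclusion is $\P_d(S \cap h^{-1}(\R_+)) \subseteq \closure(\{F(x,h(x)): F(x,u) \in \P_d(S \times \R_+)\})$, and here the strategy is merely to verify the two hypotheses of Corollary~\ref{col:finerIneq} and invoke it. Fix $p \in \P_d(S \cap h^{-1}(\R_+))$; the first hypothesis of the corollary is then immediate. For the second, I would apply Proposition~\ref{prop:inf} to the set $S \cap h^{-1}(\R_+)$, obtaining $\tilde p \in \P_d(\rec{(S \cap h^{-1}(\R_+))})$. The stated condition $\rec{(S \cap h^{-1}(\R_+))} = \rec{S} \cap \tilde h^{-1}(\R_+)$ then rewrites this as $\tilde p \in \P_d(\rec{S} \cap \tilde h^{-1}(\R_+))$, which is exactly the second hypothesis of Corollary~\ref{col:finerIneq}. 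Applying the corollary yields $p \in \closure(\{F(x,h(x)): F(x,u) \in \P_d(S \times \R_+)\})$, and since $p \in \P_d(S \cap h^{-1}(\R_+)) \subseteq \R_d[x]$, it lies in the intersection with $\R_d[x]$, completing the inclusion and hence the proof.

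I expect essentially all of the genuine difficulty to reside in Corollary~\ref{col:finerIneq}, which is assumed here: that is where the inequality problem on $S \cap h^{-1}(\R_+)$ is recast as an equality problem on the lifted set $S \times \R_+$ through the auxiliary polynomial $g(x,t) = (1+a\tr x)^{d-2\deg(h)}(h(x)-t)^2$ and the finer equality characterization of Theorem~\ref{thm:finerEqGen} is brought to bear. At the level of the present theorem the only point requiring care is that the membership $\tilde p \in \P_d(\rec{(S \cap h^{-1}(\R_+))})$ translates into the hypothesis $\tilde p \in \P_d(\rec{S} \cap \tilde h^{-1}(\R_+))$; this uses only the inclusion $\rec{S} \cap \tilde h^{-1}(\R_+) \subseteq \rec{(S \cap h^{-1}(\R_+))}$ supplied by the stated condition, whereas the opposite inclusion always holds (via Proposition~\ref{prop:inf} applied to $h$ itself) and is not needed. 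Finally I would confirm that the degree bound $\deg h \le d/2$ is precisely what Corollary~\ref{col:finerIneq} demands, so that its invocation is legitimate.
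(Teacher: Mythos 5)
Your proposal is correct and follows essentially the same route as the paper's own proof: the easy inclusion is dispatched directly, and the substantive one is obtained by applying Proposition~\ref{prop:inf} to $S \cap h^{-1}(\R_+)$, rewriting via the horizon-cone hypothesis, and invoking Corollary~\ref{col:finerIneq}. Your additional observations (spelling out the easy inclusion, and noting that only the inclusion $\rec{S} \cap \tilde h^{-1}(\R_+) \subseteq \rec{(S \cap h^{-1}(\R_+))}$ is actually used) are accurate but purely expository.
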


\begin{proof}
Clearly, $\closure(\{F(x,h(x)): F(x,u) \in \P_{d}(S\times \R_+)\}) \cap \R_d[x] \subseteq  \P_d(S\cap h^{-1}(\R_+))$. For the other direction, notice that if $p(x) \in  \P_d(S\cap h^{-1}(\R_+))$ then
by Proposition~\ref{prop:inf},
$\tilde p(x) \in \P_d(\rec{(S\cap h^{-1}(\R_+))}) = \P_d(\rec{S}\cap \tilde h^{-1}(\R_+))$. Thus, Corollary~\ref{col:finerIneq} implies $p(x) \in \closure(\{F(x,h(x)): F(x,u) \in \P_{d}(S\times \R_+)\})$.
\end{proof}

%\subsection{Addressing stability of  a Quadratic Module}
\subsection{Quadratic Modules without the Strong Moment Property}
\label{sec:Netzer}

We now show
how
%the certificate of non-negativity in
Theorem~\ref{thm:ineq} can be used to define
a hierarchy of LMI approximations that is guaranteed to converge
to the non-negative polynomials in a given set, when such type of hierarchy cannot be obtained using a quadratic module construction. Formally,
the quadratic module $\QM(g_1,\dots,g_m)$ associated to a semialgebraic set
$U = \{x \in \R^n: g_j(x) \ge 0, j=1,\dots,m\}$ is said to satisfy the {\em strong moment property} (SMP) if
%be {\em stable}
%\cite[cf.,][]{Netz09} if 
%
$\closure(\QM(g_1,\dots,g_m)) = \P(U)$ \citep[cf.,][Prop. 2.6, cond. (1)]{cimprivc2011}.
\citet{Netz09} characterizes whether
$\QM(g_1,\dots,g_m)$ satisfies the SMP
using the notion of {\em tentacles}.

\begin{definition}[Tentacles]
Given a compact set $K \subseteq \R^n$ with nonempty interior, a {\em tentacle} of $K$ in direction $z$ is the set
$T_{K,z} := \left \{(\lambda^{z_1}x_1,\ldots, \lambda^{z_n}x_n) : \lambda \ge 1, x = (x_1,\dots,x_n) \in K \right \}$.
\end{definition}

Below, we consider an example of a quadratic module similar
to the one presented in~\cite[][Section 6]{Netz09}, that does not satisfy the SMP.

\begin{example}
\label{ex:netzer}
Consider the set
%\begin{equation}
%\label{eq:set}
$U = \left \{ (x_1, x_2) \in \R^2_+: (x_2-x_1^2)(2x_1^2-x_2) \ge 0 \right \}$
%\end{equation}
illustrated in Figure~\ref{fig:netzer}. Note that $T_{K,z} \subseteq U$ by letting $K= \{(x_1,x_2) \in \R^2: (x_1 -1)^2 + (x_2 - \frac{3}{2}) \le (\frac{1}{5})^2\}$ and $z = (1,2) \in \R^2$.
% = \{(x_1 \lambda^2, \lambda): \lambda \ge 1, 1 \le x_1 \le 2\}$.
From \cite[][Theorem~5.2]{Netz09} it follows that the quadratic module $\QM(x_1, x_2, (x_2-x_1^2)(2x_1^2-x_2))$ does not satisfy the SMP \cite[cf.,][Section 6]{Netz09}. Thus $\closure(\QM(x_1, x_2, (x_2-x_1^2)(2x_1^2-x_2))) \subset \P(U)$.
\end{example} 

%\begin{example}
%\label{ex:netzer}
%Consider the set
%%\begin{equation}
%%\label{eq:set}
%$U = \{ (x_0, x) \in \R^{n+1}_+: (x_0-\sum_{i=1}^n x_i^2)(2\sum_{i=1}^n x_i^2-x_0) \ge 0 \}$
%%\end{equation}
%illustrated in Figure~\ref{fig:netzer}. Note that $T_{K,z} \subseteq U$ by letting $K= \{(x_0,x) \in \R^{n+1}: (x_0-\frac{5}{4}\sum_{i=1}^n x_i^2)(\frac{7}{4}\sum_{i=1}^n x_i^2-x_0) \ge 0, 1 \le x_0 \le 2\}$ and $z = (1,2) \in \R^2$.
%% = \{(x_1 \lambda^2, \lambda): \lambda \ge 1, 1 \le x_1 \le 2\}$.
%From \cite[][Theorem~5.2]{Netz09} it follows that the quadratic module $\QM(x_1, x_2, (x_2-x_1^2)(2x_1^2-x_2))$ does not satisfy the SMP \cite[cf.,][Section 6]{Netz09}. Thus $\closure(\QM(x_1, x_2, (x_2-x_1^2)(2x_1^2-x_2))) \subset \P(U)$.
%\end{example}
%

\begin{figure}
\begin{center}
\begin{tikzpicture}[scale = 0.75]
 \draw[fill=gray!40!white] plot[smooth,samples=100,domain=0:2.475] (2*\x,\x*\x) --  (0,6.125);
  \draw[fill=white] plot[smooth,samples=100,domain=0:1.75] (2*\x,2*\x*\x) --  (0,6.125);
   \draw[smooth,samples=100,domain=0:2.475, thick, color = gray!40!white]
  plot(2*\x, \x*\x);
   \draw[smooth,samples=100,domain=0:1.75,  thick, color = gray!40!white]
  plot(2*\x, 2*\x*\x);
%    \draw[smooth,samples=100,domain=0:2.02,  thick, color = black, dashed]
%  plot(2*\x, 1.5*\x*\x);
\draw[color=gray!40!white, thick] (2*1.75,6.125) -- (2*2.475,6.125);
\draw[color=white, very thick] (0,6.125) -- (2*1.75,6.125);
  \draw[-latex,color=black] (-1,0) -- (5,0);
  \draw[-latex,color=black] (0,-1) -- (0,5.75);
 \node[right] at (5,0){$x_1$};
 \node[above] at (0,5.75){$x_2$};
\end{tikzpicture}
\end{center}
\caption{Illustration of the set
$U =\left \{ x \in \R^2_+: (x_2-x_1^2)(2x_1^2-x_2) \ge 0 \right \}$ (in gray).}
\label{fig:netzer}
\end{figure}

A consequence of a quadratic module $\QM(g_1,\dots,g_m)$
not satisfying the SMP is that the non-negativity of a polynomial on the
associated set $U = \{x \in \R^n: g_j(x) \ge 0, j=1,\dots,m\}$
is not guaranteed to be certifiable using a LMI formulation
based on the {\em truncated} quadratic module~\cite[cf.,][]{BlekPT13}
that underlies the use of Putinar's Positivstellensatz to obtain
LMI approximations of PO problems with a compact feasible
set~\cite[see, e.g., the seminal work in][]{Lass02b, Lass01}.
As illustrated in the next proposition, in such cases, the certificate
of non-negativity in Theorem~\ref{thm:ineq} can be used to obtain a hierarchy
of LMI problems that is (in the limit) guaranteed to certify the non-negativity
of a polynomial on the set of interest, provided that the set satisfies
the appropriate horizon cone conditions.

\begin{proposition}
\label{prop:apxnetzer}
Let $U = \left \{ (x_1, x_2) \in \R^2_+: (x_2-x_1^2)(2x_1^2-x_2) \ge 0 \right \}$, and
$d \ge 8$, then:
\begin{enumerate}[(i)]
\item
\label{it:onenetzer}
$\P_d(U) =  \closure(\{F(x_1,x_2,(x_1-x_2^2)(2x_1^2-x_2)): F(x_1,x_2,u) \in \P_{d}(\R^3_+)\}) \cap \R_d[x]$. %
\item
\label{it:twonetzer}
There exists a LMI hierarchy $Q^r, r=1,\dots,$ such that
$\P_d(U) = \bigcup_{r=1}^{\infty} Q^r\cap \R_d[x]$.

\end{enumerate}
\end{proposition}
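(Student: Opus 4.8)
The plan is a direct application of Theorem~\ref{thm:ineq}. I would take the pointed closed convex cone $K=\R^2_+$, the closed set $S=\R^2_+\subseteq K$, and $h(x)=(x_2-x_1^2)(2x_1^2-x_2)$. Expanding gives $h(x)=-2x_1^4+3x_1^2x_2-x_2^2$, so $\deg h=4$ and the hypothesis $\deg h\le d/2$ holds exactly when $d\ge 8$. Since $S\times\R_+=\R^3_+$, the conclusion of Theorem~\ref{thm:ineq} is precisely the asserted identity, so the whole content of part~(i) reduces to verifying the horizon-cone condition $\rec{(S\cap h^{-1}(\R_+))}=\rec{S}\cap\tilde h^{-1}(\R_+)$, in which $S\cap h^{-1}(\R_+)=U$.

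For this I would compute both sides explicitly. On one side $\rec{S}=\rec{\R^2_+}=\R^2_+$, and the top-degree form of $h$ is $\tilde h(x)=-2x_1^4$, whence $\tilde h^{-1}(\R_+)=\{x\in\R^2: x_1=0\}$ and $\rec{S}\cap\tilde h^{-1}(\R_+)=\{(0,t):t\ge 0\}$. On the other side $U$ is the slice of the first quadrant trapped between the parabolas $x_2=x_1^2$ and $x_2=2x_1^2$, since on $\R^2_+$ the inequality $h(x)\ge 0$ is equivalent to $x_1^2\le x_2\le 2x_1^2$. Given $x^k=(a_k,b_k)\in U$ with $\lambda^k\downarrow 0$ and $\lambda^k x^k\to y$: if $(a_k)$ is bounded then $b_k\le 2a_k^2$ is bounded and $y=0$; if $a_k\to\infty$ then $\lambda^k b_k\ge(\lambda^k a_k)\,a_k$ forces $y_1=\lim\lambda^k a_k=0$, while $y_2\ge 0$ is unconstrained (take $b_k=a_k^2$, $\lambda^k=t/a_k^2$). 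Hence $\rec{U}=\{(0,t):t\ge 0\}$, the two sides agree, and Theorem~\ref{thm:ineq} yields part~(i). I expect this horizon-cone computation, rather than the invocation of Theorem~\ref{thm:ineq}, to be the only substantive step here.

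\textbf{Part (ii).} The plan is to convert the characterization of part~(i) into an explicit hierarchy by inner-approximating the cone $\P_d(\R^3_+)$ of polynomials non-negative on the orthant. I would fix an increasing family of spectrahedral (LMI-representable) cones $C^1\subseteq C^2\subseteq\cdots\subseteq\P_d(\R^3_+)$ whose union is dense in $\P_d(\R^3_+)$; a concrete choice is a copositive/P\'olya-type hierarchy, declaring $F\in C^r$ when $F(z_1^2,z_2^2,z_3^2)\,(1+\|z\|^2)^r$ is a sum of squares, or equivalently multiplying $F$ by $(1+x_1+x_2+u)^r$ and asking for a P\'olya/SOS certificate. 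Each $C^r$ is LMI-representable for fixed $r$, and by Reznick/P\'olya-type positivity theorems $\bigcup_r C^r$ is dense in $\P_d(\R^3_+)$. Setting $Q^r:=\{F(x_1,x_2,h(x)):F\in C^r\}$ — the image of $C^r$ under a fixed linear substitution, hence again describable by an LMI — produces an increasing LMI hierarchy inside $\P_d(U)$ whose union, via part~(i), is meant to recover $\P_d(U)$.

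The step I expect to be hardest is obtaining the \emph{plain} union, with no outer closure, in part~(ii). P\'olya/Reznick certificates are guaranteed only for polynomials strictly positive on the orthant away from the origin, so those members of $\P_d(\R^3_+)$ that vanish on the boundary need separate care, and a priori density only yields $\P_d(U)=\closure(\bigcup_r Q^r)\cap\R_d[x]$. I would close this gap by arguing completeness on $\R_d[x]$: that the image under $u\mapsto h(x)$ of $\bigcup_r C^r$, intersected with $\R_d[x]$, already exhausts $\P_d(U)$, exploiting that the closure in part~(i) is taken in the finite-dimensional space $\R_d[x]$ and that the substitution map is continuous. Failing an exact boundary exhaustion, the intended reading of part~(ii) is the standard convergent-hierarchy statement $\P_d(U)=\closure(\bigcup_r Q^r)\cap\R_d[x]$.
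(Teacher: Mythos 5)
Your part~(i) is correct and is exactly the paper's argument: the paper applies Theorem~\ref{thm:ineq} with $S=\R^2_+$ and $h(x)=(x_2-x_1^2)(2x_1^2-x_2)$ (so $\deg h=4\le d/2$ precisely when $d\ge 8$), and verifies the horizon-cone condition in its Proposition~\ref{prop:infcond3} by the same computation you give: $\tilde h=-2x_1^4$ yields $\rec{S}\cap\tilde h^{-1}(\R_+)=\{(0,t):t\ge 0\}$, and $\rec{U}=\{(0,t):t\ge 0\}$ via the sequence $(k,k^2)$, $\lambda_k=t/k^2$, together with the trap $(x_1^k)^2\le x_2^k\le 2(x_1^k)^2$.

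For part~(ii), your construction coincides with the paper's hierarchy \eqref{eq:PolyaApx}, and the difficulty you flagged is a genuine gap --- in fact it is a gap in the paper's own proof, which asserts, citing P\'olya, that $\bigcup_{r}Q^r=\closure(\{F(x,h(x)):F(x,u)\in\P_d(\R^3_+)\})$. That equality is false. Take $p(x)=(x_2-\tfrac{3}{2}x_1^2)^2$: it equals $F(x,h(x))$ for $F(x_1,x_2,u)=(x_2-\tfrac{3}{2}x_1^2)^2\in\P_d(\R^3_+)$, so it lies in the right-hand side without even taking closures; but it vanishes at $(1,\tfrac{3}{2})\in U$, where $x_1$, $x_2$ and $h$ (indeed $h(1,\tfrac32)=\tfrac14$) are all strictly positive. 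If $(1+e\tr x+h(x))^r p(x)=\sum_{(\alpha,\alpha')}c_{\alpha,\alpha'}x^{\alpha}h(x)^{\alpha'}$ with $c_{\alpha,\alpha'}\ge 0$, evaluating at $(1,\tfrac32)$ makes the left side $0$ while every basis function $x^{\alpha}h(x)^{\alpha'}$ on the right is strictly positive, forcing all $c_{\alpha,\alpha'}=0$ and hence $p\equiv 0$, a contradiction. So $p\notin\bigcup_r Q^r$, the union is not closed, and part~(ii) as stated fails for this hierarchy. The same evaluation argument kills your hoped-for ``exact exhaustion'' repair for any P\'olya/copositive-type (LP) hierarchy: no nonzero polynomial with a zero at a strictly positive point of $U$ is ever exactly representable. (Your Reznick/SOS variant of $C^r$ escapes this particular example, since squares survive multiplication into sums of squares, but part~(i) only identifies $\P_d(U)$ with the \emph{closure} of the substitution image, so it provides no route to a plain-union statement there either.)

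What can actually be proved is your fallback, $\P_d(U)=\closure\bigl(\bigcup_r Q^r\bigr)\cap\R_d[x]$: one inclusion because each $Q^r\subseteq\P_d(U)$ and $\P_d(U)$ is closed; the other because every $F\in\P_d(\R^3_+)$ is the limit of $F+\epsilon(1+x_1+x_2+u)^d$, which is strictly positive on the simplex after homogenization (using Proposition~\ref{prop:inf} for the top-degree part) and hence P\'olya-certifiable, and the substitution $u\mapsto h(x)$ is linear and continuous. Even here one adjustment is needed: the paper's $Q^r$ caps its members at degree $d$, while these approximants $F(x,h(x))+\epsilon(1+x_1+x_2+h(x))^d$ can have degree up to $d\deg(h)$, so the cap in \eqref{eq:PolyaApx} must be relaxed (e.g.\ to $\R_{d\deg(h)}[x]$) or a further argument supplied. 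In short: your part~(i) matches the paper; for part~(ii) your instinct was right, the plain-union claim is not established --- not by your proposal and not by the paper --- and only the closure version is within reach.
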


\begin{proof}
To show~\eqref{it:onenetzer},
let $S =  \R^2_+$, and $h(x_1,x_2) = (x_2-x_1^2)(2x_1^2-x_2)$, so
that $U = S \cap h^{-1}(\R_+)$. Clearly, $S$ is a pointed closed convex cone. Also,
the horizon cone condition $\rec{S} \cap \tilde h^{-1}(\R_+) = \rec{(S \cap h^{-1}(\R_+))}$
is satisfied (cf., Proposition~\ref{prop:infcond3}). The result then follows from Theorem~\ref{thm:ineq}.
To show~\eqref{it:twonetzer}, let $n=2$ and define the LMI hierarchy
\begin{equation}
\label{eq:PolyaApx}
Q^r = \left \{ p \in \R_{d}[x]:    (1 + e\tr x  + h(x) )^r p(x) = \sum_{
(\alpha,\alpha') \in \Gamma^{d+r\deg(h)}}
%\small
%\begin{array}{c} (\alpha,\alpha') \in \mathbb{N}: \\
%\| (\alpha,\alpha') \| \le d + r \deg{h}
%\end{array}}
c_{\alpha, \alpha'} x^{\alpha} h(x)^{\alpha'}, c_{\alpha, \alpha'}  \ge 0\right \},
\end{equation}
where $e \in \R^n$ is the vector of all ones, $x^{\alpha} := x_1^{\alpha_1} \cdots
x_n^{\alpha_n}$, and $\Gamma^r = \{ (\alpha,\alpha')  \in \N^{n+1}: \|(\alpha,\alpha')\|_1 \le r\}$. From Polya's
Positivstellensatz \cite[see, e.g.,][]{HardLP88} it follows that $\bigcup_{r=1}^{\infty} Q^r = \closure(\{F(x,h(x)): F(x,u) \in \P_{d}(\R^{n+1}_+)\})$. Thus, we obtain the desired LMI hierarchy.
\end{proof}

Notice that in Proposition~\ref{prop:apxnetzer},
a polyhedral approximation of $\P(\R^n_+)$; that is, the
{\em copositive} polynomials \cite[cf.][]{Bomz09, Bure10, Dur10} based on
Polya's Theorem is used in conjunction with Theorem~\ref{thm:ineq}
to obtain the desired approximation for $\P_d(U)$. Given the rich
knowledge of LMI approximations to the set of copositive polynomials
\cite[see, e.g.,][]{BomzSU12, BomzO14}, the hierarchy defined in \eqref{eq:PolyaApx}
is one of the many potential ones that could be used.

\subsection{Polyhedral hierarchies}
\label{sec:balls}

As illustrated by  \eqref{eq:PolyaApx}, the certificates of non-negativity
introduced here readily allow for the use of rational polynomial expressions
to certify the non-negativity of a polynomial. The existence of such rational
certificates is in general guaranteed by the Krivine-Stengle Positivstellensatz \citep{Kriv64,Sten74}. However, as discussed in detail in \citet{JeyaLL13}
finding such certificates cannot be readily formulated as a LMI. Instead, the certificate of non-negativity in Proposition~\ref{prop:apxnetzer}(\ref{it:onenetzer}) can
lead to LMI
certificates of non-negativity for a polynomial. In~Proposition~\ref{prop:apxnetzer}(\ref{it:twonetzer}),
this is
thanks to the denominator being a fixed given polynomial (i.e., $(1 + e\tr x  + h(x) )$
in~\eqref{eq:PolyaApx}). A noteworthy characteristic of such certificates is
that they can lead to {\em low rank}
 certificates even when only polyhedral
LMIs are considered to certify the non-negativity of the polynomial of interest.
By {\em low rank} we mean here that the polynomials involved
in the certificate are of low degree.  This is in contrast to the limited research
on using polyhedral LMIs to certificate the non-negativity of a polynomial
\cite[see, e.g.,][for noteworthy exceptions]{Lass02e, deKlP02, KarlMN11, SagoY13}. To formally illustrate
this, first consider the following result.

\begin{corollary}
\label{cor:compact}
Let $S = \{x \in \R^n_+: g_j(x) \ge 0, j=1,\dots,m\}$ be a compact set, and $p \in \R[x]$ satisfy $p(x) > 0$ for all $x \in S$. Then,
there exists $r \ge 0$, such that:
\begin{equation}
\label{eq:copocert}
\left (1 + \sum_{i=1}^n x_i + \sum_{j=1}^m g_j(x) \right )^r p(x) =
 \sum_{(\alpha, \beta) \in \mathbb{N}^{n+m}} c_{\alpha, \beta} x^{\alpha} g(x)^{\beta},
\end{equation}
for some $c_{\alpha, \beta} \ge 0$ for all $(\alpha, \beta) \in \mathbb{N}^{n+m}$. Where we use
$x^{\alpha} := \Pi_{i=1}^n x_i^{\alpha_i}$, and $g(x)^{\beta} := \Pi_{j=1}^m g_j(x)^{\beta_j}$.
\end{corollary}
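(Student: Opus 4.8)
The plan is to reduce the claim to Pólya's Positivstellensatz applied in an extended variable space, upgrading the closure-type statement of Proposition~\ref{prop:apxnetzer} to an exact, finite-power identity by exploiting strict positivity together with compactness. Concretely, I would introduce slack variables $u_1,\dots,u_m$ and observe that the desired right-hand side $\sum_{(\alpha,\beta)} c_{\alpha,\beta} x^\alpha g(x)^\beta$ is precisely the image, under the substitution $u_j \mapsto g_j(x)$, of a polynomial $G(x,u) = \sum c_{\alpha,\beta} x^\alpha u^\beta$ with non-negative coefficients. Hence it suffices to produce such a $G$ together with a finite exponent $r$ realizing the identity $G(x,g(x)) = (1 + \sum_i x_i + \sum_j g_j(x))^r p(x)$ in $\R[x]$. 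Writing $D(x) := 1 + \sum_i x_i + \sum_j g_j(x)$, note that $D \ge 1 > 0$ on $S$, so the denominator never vanishes on the relevant set.

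The core step is a homogenization turning this into a genuine Pólya statement on a simplex. Introduce a homogenizing variable $x_0$ and work in $\R^{1+n+m}_+$ with coordinates $(x_0,x,u)$; set $L(x_0,x,u) = x_0 + \sum_i x_i + \sum_j u_j$, so that on the lifted points $(1,x,g(x))$ one has $L = D(x)$. Using Lemma~\ref{lem:lintransf} to rescale the variables, which is legitimate because $S$ is compact, I would arrange the lift $x \mapsto (1,x,g(x))$, $x \in S$, to land in a fixed compact subset of the open orthant, and then construct a form $\Phi(x_0,x,u)$, homogeneous of some degree $\delta$, such that $\Phi(1,x,g(x)) = D(x)^a p(x)$ for a suitable fixed $a \ge 0$ and such that $\Phi$ is strictly positive on the entire simplex $\Delta = \{(x_0,x,u) \in \R^{1+n+m}_+ : L = 1\}$. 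Granting this, Pólya's Positivstellensatz \cite{HardLP88} supplies a finite $r$ for which $L^r \Phi$ has all coefficients non-negative; dehomogenizing ($x_0 = 1$) and substituting $u_j = g_j(x)$ converts $L^r \Phi$ into $D(x)^{r+a} p(x)$ and the non-negative-coefficient form into $\sum c_{\alpha,\beta} x^\alpha g(x)^\beta$, giving the assertion with exponent $r+a$.

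The main obstacle is establishing strict positivity of $\Phi$ on all of $\Delta$, not merely over the relatively compact lift of $S$. The hypothesis $p > 0$ controls $\Phi$ only along points $(1,x,g(x))$ with $x \in S$, whereas Pólya requires positivity at every point of $\Delta$ — including points with some $u_j \neq g_j(x)$, points off the lift, and the face $x_0 = 0$ corresponding to behavior at infinity. This is exactly where the constraint data must be used: a point of the orthant outside the lift of $S$ corresponds to some $g_j(x) < 0$, i.e.\ to a coordinate that is forced non-negative on $\Delta$ but would be negative under the substitution, so the correct $\Phi$ must blend the homogenization of $p$ with the slack coordinates $u_j$ and powers of $L$ so that non-negativity of the $u_j$ on $\Delta$ compensates wherever $p$ itself fails to be positive. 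Compactness of $S$ is what tames the $x_0 = 0$ face and, together with the strict inequality $p > 0$, guarantees a uniform positive lower bound for the combined form on the compact set $\Delta$; this uniform bound is precisely what forces the Pólya exponent $r$ to be finite. Verifying this lower bound and tracking the degree bookkeeping of the homogenization is the crux, after which the dehomogenization and substitution are routine.
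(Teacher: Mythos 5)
Your high-level framework---slack variables $u_j$ for the constraints, homogenization in an extra variable $x_0$, then P\'olya on the simplex---is the same one the paper operates in, but your proposal has a genuine gap at exactly the step you yourself label ``the crux.'' Everything hinges on producing a homogeneous $\Phi(x_0,x,u)$ that simultaneously (a) specializes to $D(x)^a p(x)$ under $x_0=1$, $u_j=g_j(x)$, and (b) is strictly positive on the \emph{entire} simplex $\Delta$, including points with $u\neq g(x)$ and the face $x_0=0$. You describe what such a $\Phi$ must accomplish (``blend the homogenization of $p$ with the slack coordinates\dots'') but give no construction and no existence argument. Note what the available freedom is: any valid $\Phi$ comes from a polynomial $F(x,u)$ with $F(x,g(x))=D(x)^ap(x)$, i.e.\ $F$ is determined up to adding multiples of the $(u_j-g_j(x))$, and nothing in the hypotheses ``$p>0$ on $S$, $S$ compact'' makes it evident that these multipliers can be chosen to force positivity of the homogenization on all of $\Delta$. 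That existence claim is essentially as strong as the corollary itself, so the proposal reduces the problem to an equally hard one. It is precisely the hole that the paper's cited ingredients fill: Theorem~\ref{thm:ineq}, applied once per constraint (the recursion advertised in Section~\ref{sec:main.result}), yields $\P_d(S)=\closure\left(\{F(x,g_1(x),\dots,g_m(x)):F(x,u)\in\P_d(\R^{n+m}_+)\}\right)\cap\R_d[x]$, i.e.\ it manufactures (up to closure) a copositive $F$; Proposition~\ref{prop:inf} then guarantees that the leading form $\tilde F$ is nonnegative on the orthant, which is exactly what controls the face $x_0=0$ of $\Delta$; and strict positivity of $p$ on the compact $S$ allows the standard perturbation/interior argument ($p-\eps D(x)^{d}$ still lies in the convex cone, so $p$ is an exact member, not just a limit, and the resulting form is bounded below by $\eps L^{d}$ on $\Delta$), after which P\'olya applies and dehomogenization/substitution are, as you say, routine.

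Two smaller inaccuracies are worth flagging. First, the lift $\{(1,x,g(x)):x\in S\}$ does \emph{not} land in the open orthant: points of $S$ with $x_i=0$ or with an active constraint $g_j(x)=0$ lie on the boundary, and no affine change of variables via Lemma~\ref{lem:lintransf} moves them off it; this is harmless for P\'olya (which needs positivity of the form on $\Delta$, not interiority of the lift) but it shows the rescaling step buys nothing. Second, compactness of $S$ by itself tames neither the points of $\Delta$ off the lift nor the face at infinity---in the paper those are handled by Theorem~\ref{thm:ineq} together with Proposition~\ref{prop:inf}, not by compactness alone.
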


\begin{proof}
It follows from Theorem~\ref{thm:ineq} and Polya's Theorem~\cite[see, e.g.,][]{HardLP88}.
\end{proof}

Now we show that the polyhedral hierarchy \eqref{eq:copocert}  can have low rank convergence by considering an instance of the
 {\em Celis-Dennis-Tapia} (CDT) problem~\cite[cf.,][]{CeliDT85}.
This classical problem is concerned with the non-negativity
of a quadratic polynomial over the intersection of two ellipses.
Recent advances on this problem have been made thanks to the use of
polynomial optimization techniques~\cite[cf.,][]{BomzO14}.
%This quick convergence is interesting given the
%known ``poor'' performance of this
%type of LP hierarchies for certain combinatorial
%optimization problems \cite[see, e.g.,][]{deKlP02, Lass02e, PenaVZ05, KarlMN11}, and
%motivates the further study of such LP hierarchies.
Specifically, for $n\ge 3$, consider the polynomial $q \in \R[x]$:
\[
q(x) := -2x_1 + 8x_1\sum_{i=1}^n x_i.
\]
Note that $q$ is not a {\em copositive} polynomial \cite[cf.,][]{Bomz09, Bure10, Dur10}; that is, $q \not \in \P(\R^n_+)$. In particular,
\[
q(x_1,0,\dots,0) < 0 \text{ for } 0 < x < 1/4.
\]
However, we can use Corollary~\ref{cor:compact} to certify that $q \in \P(\B_e \cap \B_{\sfrac e2})$, where
\[
\B_c = \left \{x \in \R^n_+: b_c(x):= 1-\|x-c\|^2 \ge 0 \right \},
\]
is the unitary ball centered at $c \in \R^n$.
In particular notice that:
\begin{multline}
\left ( 1 + \sum_{i=1}^n x_i + b_e(x) + b_{\sfrac e2}(x) \right )q(x) = \\ \left (8x_1\sum_{i=1}^nx_i \right )\left (b_e(x) + b_{\sfrac e2}(x) + \sum_{i=1}^nx_i \right ) +
2x_1\left (\left (\frac{5}{4}n-3 \right ) + 2\sum_{i=2}^n x_i^2 \right ),
\end{multline}
which after expanding the right hand side, has the form of \refer{eq:copocert} with $r=1$, and, in particular, certifies that $q$ is non-negative
in the intersection of the unitary balls centered at $e$, and $\frac{1}{2}e$.

\section{Further extensions}
\label{sec:extensions}
In this section we show that the assumptions of Theorem~\ref{main.thm.equ}
regarding the non-negativity of $h$ on $S$ and the existence of a pointed
cone $K$ such that $S \subseteq K$ can be relaxed by doubling the
degree of the polynomials involved in the certificate of non-negativity.
First, let us consider the non-negativity assumption on $h$ in Theorem~\ref{main.thm.equ}.

\begin{corollary}~\label{the.corol}
The statement of Theorem~\ref{main.thm.equ} holds if the hypothesis $h \in \P_d(S)$ is changed to $\deg(h) \le d/2$.
\end{corollary}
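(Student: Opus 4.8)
The plan is to reduce the corollary to Theorem~\ref{main.thm.equ} itself by replacing $h$ with its square. Set $g(x):=h(x)^2$. Since a square is non-negative on all of $\R^n$, we have $g\in\P(S)$ automatically, and $\deg(g)=2\deg(h)\le d$ by the new hypothesis, so in fact $g\in\P_d(S)$ and the non-negativity assumption required by Theorem~\ref{main.thm.equ} is satisfied for free. The degree budget $d-\deg(g)=d-2\deg(h)\ge 0$ is non-negative, so $\R_{d-\deg(g)}[x]$ is well defined and the certificate $\closure(\P_d(S)+g(x)\R_{d-\deg(g)}[x])$ makes sense. Thus $(S,g)$ satisfies every hypothesis of Theorem~\ref{main.thm.equ} (with the same pointed closed convex cone $K$ and the same $d\ge2$), and I would simply apply the theorem to this pair.

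It then remains to check that passing from $h$ to $g$ leaves every object in the equivalence unchanged, so that the conclusion for $g$ reads as the asserted statement for $h$ with the multiplier's degree doubled. First, $h(x)^2=0$ iff $h(x)=0$, so $g^{-1}(0)=h^{-1}(0)$ and hence $\P_d(S\cap g^{-1}(0))=\P_d(S\cap h^{-1}(0))$. Second, writing $h=\tilde h+r$ with $\deg(r)<\deg(h)$, one has $h^2=(\tilde h)^2+2\tilde h\,r+r^2$, where the last two summands have total degree strictly below $2\deg(h)$; since $\R[x]$ is an integral domain, $(\tilde h)^2\neq0$, so the top homogeneous component of $g$ is exactly $\widetilde{h^2}=(\tilde h)^2$, whence $\widetilde{h^2}^{\,-1}(0)=\tilde h^{-1}(0)$. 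Combining these two facts, the horizon-cone condition $\rec{(S\cap g^{-1}(0))}=\rec{S}\cap\widetilde{h^2}^{\,-1}(0)$ characterizing the identity for $g$ is literally condition~\eqref{eq.condition}.

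Feeding these identifications into Theorem~\ref{main.thm.equ} applied to $(S,g)$ yields the equivalence between~\eqref{eq.condition} and $\P_d(S\cap h^{-1}(0))=\closure(\P_d(S)+h(x)^2\R_{d-2\deg(h)}[x])$, which is precisely the statement of Theorem~\ref{main.thm.equ} with the degree of the certifying multiplier doubled, as promised at the start of the section. If one prefers to keep the original multiplier $h$, I would add the sandwich $\closure(\P_d(S)+h^2\R_{d-2\deg(h)}[x])\subseteq\closure(\P_d(S)+h\R_{d-\deg(h)}[x])\subseteq\P_d(S\cap h^{-1}(0))$: the first inclusion holds because $h^2q=h\cdot(hq)$ with $\deg(hq)\le d-\deg(h)$, and the second holds because on $h^{-1}(0)$ any term $h\,q$ vanishes, so this inclusion needs no sign hypothesis on $h$; under~\eqref{eq.condition} the outer sets coincide, forcing equality throughout. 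I expect no serious obstacle here: the only content is the bookkeeping verifying that the hypotheses of Theorem~\ref{main.thm.equ} transfer to $g$ and that the homogeneous-component and zero-set identities make the two characterizing conditions coincide; there is no new analytic idea beyond the squaring trick.
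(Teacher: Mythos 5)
Your forward implication is essentially the paper's own argument: you square $h$ to get $h_1=h^2\in\P_d(S)$ with $\deg(h_1)\le d$, observe that $h_1^{-1}(0)=h^{-1}(0)$ and $\tilde h_1=\tilde h^2$ so that condition~\eqref{eq.condition} is unchanged, apply Theorem~\ref{main.thm.equ} to $h_1$, and then use the sandwich
$\closure(\P_d(S)+h^2\R_{d-2\deg(h)}[x])\subseteq\closure(\P_d(S)+h(x)\R_{d-\deg(h)}[x])\subseteq\P_d(S\cap h^{-1}(0))$
to force equality for the original multiplier space. This is literally the chain of inclusions in the paper's proof, and that part of your proposal is correct.

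The gap is the converse implication, which the corollary also asserts: the corollary keeps identity~\eqref{equal.identity} verbatim, i.e.\ with the certificate $\closure(\P_d(S)+h(x)\R_{d-\deg(h)}[x])$, so you must show that \eqref{equal.identity} \emph{for $h$ itself} implies \eqref{eq.condition}. Your application of Theorem~\ref{main.thm.equ} to $g=h^2$ only gives the equivalence of \eqref{eq.condition} with equality for the \emph{smaller} set $\closure(\P_d(S)+h^2\R_{d-2\deg(h)}[x])$. Since $h^2\R_{d-2\deg(h)}[x]\subseteq h\R_{d-\deg(h)}[x]$, failure of \eqref{eq.condition} only produces a $p\in\P_d(S\cap h^{-1}(0))$ outside the smaller closure; such a $p$ could a priori still lie in the larger closure, so \eqref{equal.identity} is not refuted. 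Declaring that the corollary means ``the statement with the multiplier degree doubled'' changes the statement rather than proving it. The paper closes this direction with the generic counterexample of Section~\ref{sec:counter}: given $s\in\rec{S}\cap\tilde h^{-1}(0)$ with $s\notin\rec{(S\cap h^{-1}(0))}$, it constructs an explicit $p\in\P_d(S\cap h^{-1}(0))$ with $\ns{p}(0,s)=-\epsilon^2<0$, and for any $r_1\in\P_d(S)$, $r_2\in\R_{d-\deg(h)}[x]$ with $r_1+hr_2$ close to $p$ it derives the contradiction $\ns{r}_1(0,s)+\ns{h}(0,s)\ns{r}_2(0,s)<0$, because $\ns{h}(0,s)=\tilde h(s)=0$ while $\ns{r}_1(0,s)\ge 0$. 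Crucially, that argument never uses non-negativity of $h$ on $S$, so it applies directly to an $h$ with $\deg(h)\le d/2$ (this is how the paper's ``with $h\to h_1$'' should be understood); some argument of this kind, excluding a witness $p$ from the \emph{larger} $h$-certificate set, is exactly what your proposal is missing.
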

\begin{proof}
Let $h_1(x) = h(x)^2 \in \P_d(S)$. We have
\begin{align*}
\rec{(S \cap h_1^{-1}(0))}
&= \rec{(S \cap h^{-1}(0))}\\
&= \rec{S} \cap \tilde h^{-1}(0) & \text{(by hypothesis)}\\
&= \rec{S} \cap \tilde h_1^{-1}(0) &\text{(as $\tilde{h}_1 = \tilde{h}^2$)}.
\end{align*}
Applying Theorem \ref{main.thm.equ}, we have that $\rec{(S \cap h_1^{-1}(0))}  = \rec{S} \cap \tilde h_1^{-1}(0)$ implies:
\begin{align*}
 \P_d(S\cap h^{-1}(0)) &= \P_d(S\cap h_1^{-1}(0)) \\& = \closure(\P_d(S) + h_1(x)\R_{d-\deg(h_1)}[x]) \\ &\subseteq \closure(\P_d(S) + h(x)\R_{d-\deg(h)}[x])
 \\ &\subseteq  \P_d(S\cap h^{-1}(0)).
 \end{align*}
 For the implication in the other direction, note that if there is $s \in \rec{S} \cap \tilde{h}^{-1}(0)$ such that $s \not \in \rec{(S \cap h^{-1}(0))}$, then $ s\in \rec{S} \cap \tilde{h}_1^{-1}(0)$, and
 $s \not \in \rec{(S \cap h_1^{-1}(0))}$, and the counter-example in Section~\ref{sec:counter} follows with
 $h \to h_1$.
\end{proof}

Now we consider the extension of Theorem~\ref{main.thm.equ} to the case in which the underlying set $S$ is not contained on a pointed cone.
For that purpose, we first state the following lemma.

\begin{lemma}\label{lem:red2+}
Let $U \subseteq \R^n$ be a closed set.  Then $\rec{\{(x,y) \in \R^{2n}_+: x-y \in U\}} = \{(x,y) \in \R^{2n}_+: x-y \in \rec U\}$
\end{lemma}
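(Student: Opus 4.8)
The plan is to prove the two inclusions separately. Write $W := \{(x,y)\in\R^{2n}_+ : x-y\in U\}$ for the set whose horizon cone we want, and $V := \{(x,y)\in\R^{2n}_+ : x-y\in\rec{U}\}$ for the claimed answer; the degenerate case $U=\emptyset$ is immediate from the convention $\rec{\emptyset}=\emptyset$ in Definition~\ref{def:horizon}, so assume $U\neq\emptyset$. The inclusion $\rec{W}\subseteq V$ is the routine direction: I would take $(u,v)\in\rec{W}$, so by Definition~\ref{def:horizon} there are $(x^k,y^k)\in W$ and $\lambda^k\downarrow 0$ with $\lambda^k(x^k,y^k)\to(u,v)$. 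Since each $(x^k,y^k)\in\R^{2n}_+$ and $\R^{2n}_+$ is closed, the limit satisfies $(u,v)\in\R^{2n}_+$. Moreover $z^k:=x^k-y^k\in U$ and $\lambda^k z^k=\lambda^k x^k-\lambda^k y^k\to u-v$, so applying Definition~\ref{def:horizon} to the sequence $z^k\in U$ gives $u-v\in\rec{U}$, whence $(u,v)\in V$.

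The substantive direction is $V\subseteq\rec{W}$, which requires an explicit construction. Given $(u,v)\in V$, I would pick $z^k\in U$ and $\lambda^k\downarrow 0$ with $\lambda^k>0$ such that $\lambda^k z^k\to u-v$. The idea is to lift each $z^k$ to a pair $(x^k,y^k)\in\R^{2n}_+$ with $x^k-y^k=z^k$, so that membership in $W$ is automatic, while arranging $\lambda^k(x^k,y^k)\to(u,v)$. Writing $(z^k)_+$ and $(z^k)_-$ for the componentwise positive and negative parts of $z^k$, and letting $t$ be the vector with entries $t_i:=\min(u_i,v_i)$ (so $t\ge 0$ because $u,v\ge 0$), I would set
\[
x^k := (z^k)_+ + t/\lambda^k, \qquad y^k := (z^k)_- + t/\lambda^k.
\]
Then $x^k,y^k\ge 0$, and $x^k-y^k=(z^k)_+-(z^k)_-=z^k\in U$, so $(x^k,y^k)\in W$.

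The key calculation is that $\lambda^k x^k\to u$ and $\lambda^k y^k\to v$. For this I would first establish the coordinatewise limits $\lambda^k(z^k)_+\to(u-v)_+$ and $\lambda^k(z^k)_-\to(u-v)_-$: in any coordinate with $u_i\neq v_i$ this is immediate because $z^k_i$ eventually has a fixed sign, while in a coordinate with $u_i=v_i$ it follows from the disjoint supports of the positive and negative parts, which give $\lambda^k\big((z^k)_{+,i}+(z^k)_{-,i}\big)=\lambda^k|z^k_i|=|\lambda^k z^k_i|\to 0$. Combining these with $\lambda^k\,(t/\lambda^k)=t$ and the elementary componentwise identities $(u-v)_++t=u$ and $(u-v)_-+t=v$ yields $\lambda^k(x^k,y^k)\to(u,v)$, and hence $(u,v)\in\rec{W}$.

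I expect the main obstacle to be the bookkeeping in the equality coordinates $u_i=v_i$: there the scaled parts $\lambda^k(z^k)_{\pm,i}$ need not stabilize in sign, and one must invoke the disjoint-support observation rather than a naive limit to see both converge to $0$. The additive shift $t/\lambda^k$ is the other essential ingredient, needed precisely because $\min(u_i,v_i)$ may be strictly positive (for instance when $u=v$, using that $0\in\rec{U}$ whenever $U\neq\emptyset$); without it the raw splitting would scale to $\big((u-v)_+,(u-v)_-\big)$ instead of to $(u,v)$.
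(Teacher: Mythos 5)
Your proof is correct and is essentially the paper's own argument: the paper's lift $x_k = (x-(x-y)^+)/\lambda_k + z_k^+$, $y_k = (y-(x-y)^-)/\lambda_k + z_k^-$ coincides coordinate-wise with your $x^k = (z^k)_+ + t/\lambda^k$, $y^k = (z^k)_- + t/\lambda^k$, since $x-(x-y)^+ = y-(x-y)^- = \min(x,y) = t$. The only cosmetic difference is in verifying $\lambda^k(z^k)_\pm \to (u-v)_\pm$, which the paper treats as immediate (it follows from positive homogeneity and continuity of the positive-part map) while you give a coordinatewise case analysis.
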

\begin{proof}
To show $\rec{\{(x,y) \in \R^{2n}_+: x-y \in U\}} \subseteq \{(x,y) \in \R^{2n}_+: x-y \in \rec U\}$,
 let $(x,y) \in \rec{\{(x,y) \in \R^{2n}_+: x-y \in U\}}$. Then, there are $(x_k,y_k)$ and $\lambda_k\downarrow 0$ such that $x_k-y_k \in U$ and $(x,y) = \lim_{k \rightarrow \infty} \lambda_k(x_k,y_k)$. Then, $x-y = \lim_{k \rightarrow \infty} \lambda_k (x_k - y_k)\in \rec U$. For the other direction, let $(x,y) \in \R^{2n}_+$ be such that $x-y \in \rec U$, there are $z_k \in U$ and $\lambda_k\downarrow 0$ such that $x-y = \lim_{k \rightarrow \infty} \lambda_k z_k$.
For any $s \in \R^n$, let $s^+ = \max(s,0)$ and $s^- = \max(-s,0)$ where the max is applied coordinate-wise. Without loss of generality, assume $\lambda_k \neq 0$. Define $x_k = (x-(x-y)^+)/\lambda_k + z_k^+$ and $y_k = (y-(x-y)^-)/\lambda_k + z_k^-$. Then for all $k=1,\dots$, we have $x_k,y_k \geq 0$ and $x_k - y_k = (x-y-((x-y)^+ - (x-y)^-))/\lambda_k + z_k^+ - z_k^- = z_k \in U$. Notice that $\lim_{k \rightarrow \infty}\lambda_k z_k^+ = (x-y)^+$ and $\lim_{k \rightarrow \infty}\lambda_kz_k^- = (x-y)^-$ and thus  $ \lim_{k \rightarrow \infty} (x_k , y_k)\lambda_k = (x,y)$.
\end{proof}

\begin{theorem}
\label{th:nonconic}
Let $S\subseteq \R^n$ be a closed set and $h\in \P_d(S)$ be such that
 $\rec{(S \cap h^{-1}(0))} = \rec{S} \cap \tilde h^{-1}(0).$
Then
\begin{equation*}
 \P_d(S\cap h^{-1}(0)) = \closure(\P_{2d}(S) + h(x)\R_{2(d-\deg(h))}[x]) \cap \R_d[x].
\end{equation*}
\end{theorem}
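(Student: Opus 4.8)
The plan is to reduce to Theorem~\ref{main.thm.equ} by lifting $S$ into the pointed cone $\R^{2n}_+$ through the substitution $x = u - v$, which is exactly the construction whose horizon cone is computed in Lemma~\ref{lem:red2+}. Concretely, I would set $\hat S := \{(u,v) \in \R^{2n}_+ : u - v \in S\}$ and $\hat h(u,v) := h(u-v)$. Then $\hat S$ is a closed subset of the pointed cone $K = \R^{2n}_+$, and $\hat h \in \P_d(\hat S)$ since $h \in \P_d(S)$ and $u-v \in S$ on $\hat S$. One checks that the top homogeneous component of $\hat h$ is $\tilde{\hat h}(u,v) = \tilde h(u-v)$ (setting $v=0$ shows $\tilde h(u-v) \not\equiv 0$, while the lower-order terms of $h$ contribute only lower-order terms in $(u,v)$), so that $\tilde{\hat h}^{-1}(0) = \{(u,v): u - v \in \tilde h^{-1}(0)\}$.

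Next I would verify the horizon-cone hypothesis of Theorem~\ref{main.thm.equ} for the pair $(\hat S, \hat h)$. Since $\hat S \cap \hat h^{-1}(0) = \{(u,v) \in \R^{2n}_+ : u - v \in S \cap h^{-1}(0)\}$, applying Lemma~\ref{lem:red2+} with $U = S \cap h^{-1}(0)$ and with $U = S$ gives
\[
\rec{(\hat S \cap \hat h^{-1}(0))} = \{(u,v) \in \R^{2n}_+ : u - v \in \rec{(S\cap h^{-1}(0))}\}, \quad \rec{\hat S} \cap \tilde{\hat h}^{-1}(0) = \{(u,v)\in \R^{2n}_+ : u - v \in \rec S \cap \tilde h^{-1}(0)\}.
\]
The hypothesis $\rec{(S\cap h^{-1}(0))} = \rec S \cap \tilde h^{-1}(0)$ makes these two sets equal, so Theorem~\ref{main.thm.equ} applies in $\R^{2n}$ and yields $\P_d(\hat S \cap \hat h^{-1}(0)) = \closure(\P_d(\hat S) + \hat h(u,v)\R_{d-\deg(h)}[u,v])$.

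The core step transports a certificate in the $2n$ variables $(u,v)$ back to $n$ variables, which is where the degree doubling enters. I would use the coordinatewise identity $t = \tfrac{(t+1)^2}{4} - \tfrac{(t-1)^2}{4}$ to define the degree-two maps $a_i(x) = \tfrac{(x_i+1)^2}{4}$ and $b_i(x) = \tfrac{(x_i-1)^2}{4}$, so that $a(x), b(x) \ge 0$ everywhere and $a(x) - b(x) = x$. Given $p \in \P_d(S \cap h^{-1}(0))$, the polynomial $\hat p(u,v) := p(u-v)$ lies in $\P_d(\hat S \cap \hat h^{-1}(0))$ (for $(u,v)$ in that set, $u-v \in S \cap h^{-1}(0)$), so by the displayed equality $\hat p = \lim_i (P_i + \hat h Q_i)$ with $P_i \in \P_d(\hat S)$ and $Q_i \in \R_{d-\deg(h)}[u,v]$. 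Substituting $u = a(x), v = b(x)$ --- a linear, hence continuous, map $\R_d[u,v] \to \R_{2d}[x]$ --- sends $\hat p$ to $p$, sends each $P_i$ into $\P_{2d}(S)$ (since $(a(x),b(x)) \in \hat S$ for every $x \in S$), and sends $\hat h Q_i$ to $h(x)\,Q_i(a(x),b(x))$ with $Q_i(a(x),b(x)) \in \R_{2(d-\deg(h))}[x]$. Passing the limit through the substitution gives $p \in \closure(\P_{2d}(S) + h(x)\R_{2(d-\deg(h))}[x]) \cap \R_d[x]$, which is one inclusion. The reverse inclusion is the routine direction (an analogue of Lemma~\ref{lem:subset}): any coefficientwise limit of polynomials $r_i + h q_i$ with $r_i \in \P_{2d}(S)$ is pointwise non-negative on $S \cap h^{-1}(0)$, since there $h$ vanishes and $r_i \ge 0$.

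The main obstacle is the substitution step: choosing $a,b$ so that simultaneously $a - b = x$ (to recover $p$ and turn $\hat h$ back into $h$) and $a, b \ge 0$ on all of $\R^n$ (so that $\P_d(\hat S)$ maps into $\P(S)$). The quadratic identity above resolves this cleanly, and it is precisely the unavoidable quadratic cost of this substitution that forces the degree to double from $d$ to $2d$. The remaining care is bookkeeping: confirming $\tilde{\hat h}(u,v) = \tilde h(u-v)$, that the substitution is continuous so it commutes with the closure, and that all degree counts ($P_i\circ(a,b)$ of degree $\le 2d$ and $Q_i\circ(a,b)$ of degree $\le 2(d-\deg(h))$) match the claimed memberships.
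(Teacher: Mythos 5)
Your proposal is correct and follows essentially the same route as the paper: lift $S$ into the pointed cone $\R^{2n}_+$ via $\hat S=\{(u,v)\in\R^{2n}_+ : u-v\in S\}$, transfer the horizon-cone hypothesis with Lemma~\ref{lem:red2+}, apply Theorem~\ref{main.thm.equ} there, and pull the certificate back by a coordinatewise quadratic substitution with nonnegative components whose difference is $x$, which is exactly where the degree doubles. The only cosmetic difference is the choice of that substitution (you use $\bigl(\tfrac{(x_i+1)^2}{4},\tfrac{(x_i-1)^2}{4}\bigr)$, the paper uses $(x_i^2+x_i+1,\,x_i^2+1)$), which changes nothing of substance.
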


\begin{proof}
From Lemma~\ref{lem:subset},  $\closure(\P_{2d}(S) + h(x)\R_{2(d-\deg(h))}[x]) \cap \R_d[x] \subseteq \P_d(S\cap h^{-1}(0))$. Now we show,
\[
\P_d(S\cap h^{-1}(0)) \subseteq \closure(\P_{2d}(S) + h(x)\R_{2(d-\deg(h))}[x]).
\]
Define $T = \{(z,y)\in \R^{2n}_+ : z-y \in S\}$ and $g(z,y) = h(z-y)$. Then,
\[
\begin{array}{lllll}
\rec{(T \cap g^{-1}(0))}
&= \rec{\{(z,y)\in \R^{2n}_+ : z-y \in S \cap h^{-1}(0) \}}\\
& = \{(z,y)\in \R^{2n}_+ : z-y \in \rec{ (S \cap h^{-1}(0))} \} & \text{(by Lemma \ref{lem:red2+})}\\
& = \{(z,y)\in \R^{2n}_+ : z-y \in \rec{ S} \cap \tilde h^{-1}(0) \} & \text{(by hypothesis)}\\
& = \{(z,y)\in \R^{2n}_+ : z-y \in \rec{ S} \} \cap \tilde g^{-1}(0) & \text{ }\\
&=  \rec{T} \cap \tilde g^{-1}(0)& \text{(by Lemma \ref{lem:red2+}).}
\end{array}
\]
From Theorem \ref{main.thm.equ},
\begin{equation*}
\P_d(T\cap g^{-1}(0)) = \closure(\P_d(T) + g(z,y)\R_{d-\deg(g)}[z,y]).
\end{equation*}
Now, let $p(x) \in \P_d(S\cap h^{-1}(0))$, then $\hat{p}(z,y) = p(z-y) \in \P_d(T\cap g^{-1}(0))$. Thus, there are $r_i(z,y) \in \P_d(T)$ and $q_i(z,y) \in \R_{d-\deg(h)}[z,y]$ for $i=1,2,\dots$ such that $\hat{p}(z,y) = \lim_{i \rightarrow \infty} r_i(z,y) + g(z,y)q_i(z,y)$. Therefore
\begin{align*}
p(x)
= \hat{p}(x^2+x+1,x^2+1)
=\lim_{i \rightarrow \infty} r_i(x^2+x+1,x^2+1) + h(x)q_i(x^2+x+1,x^2+1),
\end{align*}
but $r_i(x^2+x+1,x^2+1) \in \P_{2d}(S)$ and $\deg(q_i(x^2+x+1,x^2+1)) \leq 2(d-\deg(h))$ for every~$i$.
\end{proof}

\section{Proof of Theorem~\ref{thm:finerEqGen}}
\label{sec:proofs}
Following the statement of Theorem~\ref{thm:finerEqGen}, throughout this Section, let $K \subseteq \R^n$ be a given  closed convex pointed  cone.

The proof of Theorem~\ref{thm:finerEqGen} relies on Proposition~\ref{th:compact} via a suitable {\em compactification} procedure of the set $S\cap h^{-1}(0)$ (cf., \eqref{eq:finer}). Specifically, fix a point
%(from the non-empty interior of $K^*$)
\begin{equation}
\label{eq:adef}
a\in \R^n \text{ such that } a\tr x > 0 \text{ for all } x \in K\setminus\{0\},
\end{equation}
%\new{
%{\tt \noindent To erase:}
%If $K$ is a closed convex pointed cone, then $\int(K^*)$ is non-empty (Theorem 2.3.1.(iii) Bental-Nemirovskii Lectures book).
%Then there is $a \in \int(K^*)$ such that  $a\tr x > 0$ for all $x \in K \setminus \{0\}$. Proof: Assume not. Then
%there is $y \in K \setminus \{0\}$ such that $a\tr y \le 0$. Thus $(a - \epsilon y)\tr y = a\tr y - \epsilon \|y\|^2 < 0$ for
%all $\epsilon > 0$, which is a contradiction of $a \in \int(K^*)$.
%}
and for any $x\in \R^n$  let
\begin{equation}
\label{eq:comp}
\ns{x}:= \frac{1}{1+a\tr x} \matr{1 \\ x}.
\end{equation}
Also, for $p \in \R_d[x]$, let $\ns{p} \in \R[x_0,x]$ be defined by
\begin{equation}
\label{eq:popbar}
\ns{p}(x_0,x_1,\dots,x_n) := p(x_1/x_0,\dots,x_n/x_0)\cdot x_0^{d}.
\end{equation}
Notice that \eqref{eq:popbar} corresponds to the natural homogenization of a polynomial of degree $d$.
Furthermore, if $\deg{p} <d$, then $\ns{p}$ is a homogeneous polynomial of degree $d$.
Finally, for~$S \subseteq K$ let
\begin{equation}
\label{eq:setbar}
\ns{S}:=\text{closure}\{\ns{x}:x \in S\}.
\end{equation}
%\new{
\begin{lemma}
\label{lem:alpha}
Let $a \in \R^n$ be defined by~\eqref{eq:adef}. Then, there
exists $\alpha > 0$ such that $a\tr x \ge \alpha \|x\|$ for all $x \in K$.
\end{lemma}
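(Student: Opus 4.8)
The plan is to reduce the claim to the extreme value theorem applied to the linear functional $x \mapsto a\tr x$ on a suitable compact set. First I would dispose of the degenerate case $K = \{0\}$, where the asserted inequality holds trivially at $x = 0$ for any choice of $\alpha > 0$. So from now on assume $K \neq \{0\}$.

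The key observation is that the slice $C := K \cap \{x \in \R^n : \|x\| = 1\}$ is compact. Indeed, it is closed, being the intersection of the closed cone $K$ with the closed unit sphere, and it is bounded, hence compact by Heine--Borel; moreover it is nonempty because $K$ is a nontrivial cone, so it contains some $x \neq 0$ and therefore the unit vector $x/\|x\| \in K$. On $C$ the functional $x \mapsto a\tr x$ is continuous, and by the defining property \eqref{eq:adef} of $a$ it is strictly positive at every point of $C$, since each such point belongs to $K \setminus \{0\}$. By the extreme value theorem this functional attains its minimum on $C$, so $\alpha := \min_{x \in C} a\tr x$ is well defined and satisfies $\alpha > 0$.

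It then remains to check that this $\alpha$ works for every $x \in K$. For $x = 0$ both sides of $a\tr x \ge \alpha \|x\|$ vanish. For $x \in K \setminus \{0\}$, the cone property gives $x/\|x\| \in K$, and this vector has unit norm, so $x/\|x\| \in C$ and hence $a\tr(x/\|x\|) \ge \alpha$; multiplying through by the scalar $\|x\| > 0$ yields $a\tr x \ge \alpha \|x\|$, as required.

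I do not anticipate any genuine obstacle here: the entire content of the lemma is the passage from pointwise strict positivity on the directions $K \setminus \{0\}$ to a uniform linear lower bound, and this is exactly what compactness of the unit slice $C$ provides. The only points deserving a word of care are verifying that $C$ is nonempty (which is where $K \neq \{0\}$ enters) and invoking the cone property to rescale an arbitrary nonzero $x \in K$ so that it lands in $C$.
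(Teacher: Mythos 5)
Your proposal is correct and follows essentially the same argument as the paper: compactness of the unit slice $\{x \in K : \|x\|=1\}$, strict positivity of $x \mapsto a\tr x$ there by \eqref{eq:adef}, attainment of the minimum $\alpha > 0$, and rescaling an arbitrary nonzero $x \in K$ onto the slice. Your explicit handling of the degenerate cases $K = \{0\}$ and $x = 0$ is a minor point of extra care that the paper's proof glosses over, but it does not change the approach.
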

\begin{proof}
Let $B = \{x\in K: \|x\| = 1\}$, $a \in \R^n$ be defined by~\eqref{eq:adef}, and $\alpha := \inf \{a\tr x: x \in B\} > 0$. As $B \subseteq \R^n$ is compact, $\alpha$ is attained. Thus, for any $x \in K\setminus \{0\}$, $a\tr x =  \left(a\tr \frac{x}{\|x\|} \right)  \|x\|\ge \alpha \|x\|$.
\end{proof}
\begin{lemma}\label{lem:barCompact} For any $S \subseteq K$, the set $\ns{S}$ is compact.
\end{lemma}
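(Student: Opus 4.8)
The plan is to show directly that the set $\{\ns{x} : x \in S\} \subseteq \R^{n+1}$ is bounded. Since $\ns{S}$ is by definition the closure of this set (see \eqref{eq:setbar}), it is automatically closed, so once boundedness is established the Heine--Borel theorem gives compactness of $\ns{S}$ in $\R^{n+1}$ immediately. Thus the whole lemma reduces to a single uniform bound on $\|\ns{x}\|$ over $x \in S$.

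To obtain that bound I would compute the Euclidean norm of $\ns{x}$ from \eqref{eq:comp}. For $x \in K$ we have $a\tr x \ge 0$ by \eqref{eq:adef}, so $1 + a\tr x \ge 1 > 0$ and $\ns{x}$ is well defined, with
\[
\|\ns{x}\| = \frac{\sqrt{1+\|x\|^2}}{1+a\tr x}.
\]
Since $S \subseteq K$, Lemma~\ref{lem:alpha} furnishes a constant $\alpha > 0$ such that $a\tr x \ge \alpha\|x\|$ for every $x \in K$. Bounding the numerator by $\sqrt{1+\|x\|^2} \le 1+\|x\|$ and the denominator below by $1 + a\tr x \ge 1 + \alpha\|x\|$ yields
\[
\|\ns{x}\| \le \frac{1+\|x\|}{1+\alpha\|x\|} \qquad \text{for all } x \in S.
\]

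The final step is to observe that the right-hand side is uniformly bounded. Setting $t = \|x\| \ge 0$, the function $f(t) = (1+t)/(1+\alpha t)$ is monotone with $f(0) = 1$ and $f(t) \to 1/\alpha$ as $t \to \infty$, so $f(t) \le \max\{1, 1/\alpha\}$ for all $t \ge 0$. Hence $\|\ns{x}\| \le \max\{1, 1/\alpha\}$ for every $x \in S$, the set $\{\ns{x} : x \in S\}$ is bounded, and $\ns{S}$ is compact. I do not anticipate any real obstacle here: the only substantive ingredient is the linear lower bound $a\tr x \ge \alpha\|x\|$ on the pointed cone $K$, which is precisely Lemma~\ref{lem:alpha}; given that bound, the estimate above is a short computation.
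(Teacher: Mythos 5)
Your proof is correct and follows essentially the same route as the paper's: both reduce compactness to boundedness (since $\ns{S}$ is closed by definition), invoke Lemma~\ref{lem:alpha} for the bound $a\tr x \ge \alpha\|x\|$ on $K$, and estimate $\|\ns{x}\| \le \frac{1+\|x\|}{1+\alpha\|x\|}$. In fact your final uniform bound $\max\{1,1/\alpha\}$ is slightly more careful than the paper's stated bound $1/\alpha$, which is only valid when $\alpha \le 1$.
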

\begin{proof} By construction, $\ns{S}$ is closed, thus it is enough to show $\ns{S}$ is bounded.
From Lemma~\ref{lem:alpha}, it follows that $a\tr x \ge \alpha \|x\|$ for all $x \in K$.
%From \eqref{eq:adef},
%there is $0<\alpha \leq 1$ such that $a\tr x \ge \alpha\|x\|$ for any $x \in K$.
 Thus, for any $x \in S \setminus \{0\}$ we have $\|\ns{x}\| \leq \frac { 1 + \|x\|}{1 + \alpha\|x\|} \leq \frac 1{\alpha}$.
\end{proof}
%}
The map $x \in \R^n \mapsto \ns x \in \R^{n+1}$ in \eqref{eq:comp} establishes a natural correspondence between the cone~$K$ and a bounded slice of the cone $\R_+\times K$.
Loosely speaking, $\rec{S}$ is the boundary ``at infinitum'' of $S$.
Similar to the natural correspondence between the sets
$\{\ns{x}:x \in S\}$ and $S$,
the set $\ns{S}$ corresponds in a natural way to the set $S \cup \rec{S}$.
This correspondence also extends naturally to polynomials over~$S$ in such way that non-negativity of the
polynomial is preserved.
 To formally state this correspondence, we first state some basic properties of the
 maps~\eqref{eq:popbar} and~\eqref{eq:setbar}.
 For the purpose of brevity, in what follows, we use $(x_0, x) \in A$ for any set $A \subseteq \R^{n+1}$
to indicate $(x_0, x\tr)\tr \in A$,
where $x_0 \in \R$, $x \in \R^n$.

\begin{lemma}\label{lem:tilde+}Let $p \in \R_d[x]$  and $S \subseteq K$. Then
\begin{enumerate}[(i)]
\item \label{it:2} For every $x \in \R^{n}$, $\tilde p(x) = \ns p(0,x)$ for any $p$ of degree $d$.
\item \label{it:3} $p(x) = \ns{p}(\overline x)(1+a\tr x)^{d}$.
\item \label{it:4} $p \in \P_d(S)$ if and only if $\ns{p} \in \P_d(\ns{S})$.
\end{enumerate}
\end{lemma}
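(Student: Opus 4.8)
The plan is to treat the three parts in the order stated, since (iii) rests on (ii), which in turn is just an unwinding of the definitions \eqref{eq:comp} and \eqref{eq:popbar}.

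For part (\ref{it:2}) I would expand $p(x) = \sum_{|\alpha| \le d} c_\alpha x^\alpha$ and apply the homogenization formula \eqref{eq:popbar} termwise: the monomial $c_\alpha x^\alpha$ contributes $c_\alpha x^\alpha x_0^{\,d-|\alpha|}$ to $\ns p$. Evaluating at $x_0 = 0$ then annihilates every term with $|\alpha| < d$ and leaves exactly $\sum_{|\alpha|=d} c_\alpha x^\alpha$, which equals $\tilde p(x)$ precisely because $\deg p = d$ (this is where the hypothesis that $p$ has degree $d$, rather than degree $< d$, is used). This is a one-line monomial bookkeeping argument.

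For part (\ref{it:3}) I would substitute $\ns x$ from \eqref{eq:comp} into \eqref{eq:popbar}. Writing $\ns x = (x_0, x_1', \dots, x_n')$ with $x_0 = (1+a\tr x)^{-1}$ and $x_i' = x_i(1+a\tr x)^{-1}$, the key observation is that the ratios appearing in \eqref{eq:popbar} collapse, namely $x_i'/x_0 = x_i$. Hence $\ns p(\ns x) = p(x)\,x_0^{d} = p(x)(1+a\tr x)^{-d}$, and multiplying through by $(1+a\tr x)^d$ yields the asserted identity. The only point to record is that $1 + a\tr x \neq 0$, so that the division defining $\ns x$ and the ratios $x_i'/x_0$ is legitimate; this follows from \eqref{eq:adef} together with Lemma~\ref{lem:alpha}, which give $a\tr x \ge 0$, hence $1 + a\tr x \ge 1 > 0$, for every $x \in K \supseteq S$.

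For part (\ref{it:4}) I would transport non-negativity between the two settings using (\ref{it:3}). On $\{\ns x : x \in S\}$ the identity rearranges to $\ns p(\ns x) = p(x)(1+a\tr x)^{-d}$, and since $(1+a\tr x)^d \ge 1 > 0$ on $K$, the sign of $\ns p(\ns x)$ agrees with that of $p(x)$; thus $p \ge 0$ on $S$ if and only if $\ns p \ge 0$ on $\{\ns x : x \in S\}$. For the forward implication I would upgrade this to all of $\ns S = \closure\{\ns x : x \in S\}$ by invoking continuity of the polynomial $\ns p$, since a continuous function that is non-negative on a set is non-negative on its closure; the converse is immediate because $\{\ns x : x \in S\} \subseteq \ns S$. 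I do not anticipate a genuine obstacle: all three parts are definitional once the cancellation $x_i'/x_0 = x_i$ in (\ref{it:3}) is observed, and the only items requiring care are the strict positivity of $1 + a\tr x$ on $K$ and the routine closure argument in (\ref{it:4}).
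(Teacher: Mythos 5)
Your proof is correct and follows essentially the same route as the paper, which simply asserts that parts (\ref{it:2}) and (\ref{it:3}) follow from the definitions \eqref{eq:comp} and \eqref{eq:popbar} and that (\ref{it:4}) follows from (\ref{it:3}); your write-up just supplies the monomial bookkeeping, the cancellation $x_i'/x_0 = x_i$, and the continuity-on-the-closure step that the paper leaves implicit. The only cosmetic remark is that the positivity $1 + a\tr x \ge 1$ on $K$ needs only \eqref{eq:adef}, not Lemma~\ref{lem:alpha}.
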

\begin{proof}
Properties~\eqref{it:2} and \eqref{it:3} readily follow from the definitions of $\tilde p$ (the homogeneous component of $p$ of highest degree), $\ns{p}$ (in \eqref{eq:popbar}) and $\ns x$ (in \eqref{eq:comp}). Property~\eqref{it:4} directly follows from~\eqref{it:3}.
\end{proof}

With Lemma~\ref{lem:tilde+}, next we characterize the $\closure(\P_d(S) + h(x)\R)$ (cf., \eqref{eq:finer}) using the
maps~\eqref{eq:popbar} and~\eqref{eq:setbar}.

\begin{lemma}\label{lem:barclosure}
Let $S\subseteq K$ and $h \in \R_d[x]$ be of degree $d$.
Then
$
\closure(\P_d(S) + h(x)\R)
=  \{p(x)\in \R_d[x]: \ns p(x_0,x) \in \P_d( \ns{S} \cap \ns h^{-1}(0))\}$.
\end{lemma}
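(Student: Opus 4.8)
The plan is to transfer the claimed identity to the compactified set $\ns S$, where the already-established compact case (Proposition~\ref{th:compact}) applies, and then pull the resulting certificate back by dehomogenization. Throughout I use that the homogenization map $p\mapsto\ns p$ of \eqref{eq:popbar} is a linear bijection from $\R_d[x]$ onto the homogeneous polynomials of degree $d$ in $(x_0,x)$, with inverse $P\mapsto P(1,\cdot)$; both maps are linear between finite-dimensional spaces, hence continuous, so they preserve closures. I also use that $\ns S$ is compact (Lemma~\ref{lem:barCompact}). I would prove the two inclusions separately.

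For $\closure(\P_d(S)+h(x)\R)\subseteq\{p:\ns p\in\P_d(\ns S\cap\ns h^{-1}(0))\}$, take $p=\lim_i(p_i+c_ih)$ with $p_i\in\P_d(S)$ and $c_i\in\R$. Homogenizing and using continuity gives $\ns p=\lim_i(\ns{p_i}+c_i\ns h)$. By property~\eqref{it:4} of Lemma~\ref{lem:tilde+} each $\ns{p_i}$ is non-negative on $\ns S$, and $\ns h$ vanishes on $\ns h^{-1}(0)$, so every $\ns{p_i}+c_i\ns h$ is non-negative on $\ns S\cap\ns h^{-1}(0)$; passing to the limit yields $\ns p\in\P_d(\ns S\cap\ns h^{-1}(0))$. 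This direction uses neither compactness nor any sign information on $h$.

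The reverse inclusion is the hard one, and it is where the compactification pays off. Given $p$ with $\ns p\in\P_d(\ns S\cap\ns h^{-1}(0))$, I would apply Proposition~\ref{th:compact} to the compact set $\ns S$ and the degree-$d$ polynomial $\ns h$ (so that $\R_{d-\deg\ns h}=\R$), obtaining $\ns p=\lim_i(Q_i+c_i\ns h)$ with $Q_i\in\P_d(\ns S)$ and $c_i\in\R$. The remaining difficulty is that the $Q_i$ need not be homogeneous, so dehomogenizing them by setting $x_0=1$ need not land in $\P_d(S)$. I would remove this obstruction using the fact, immediate from \eqref{eq:comp}, that every point of $\ns S$ lies on the affine hyperplane $\{(x_0,x):x_0+a\tr x=1\}$. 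Hence multiplying a polynomial by powers of the linear form $\ell:=x_0+a\tr x$ does not change its values on $\ns S$: replacing each $Q_i=\sum_k Q_i^{(k)}$ (its decomposition into homogeneous parts) by $\widehat Q_i:=\sum_k Q_i^{(k)}\,\ell^{\,d-k}$ produces a homogeneous degree-$d$ polynomial agreeing with $Q_i$ on $\ns S$, so $\widehat Q_i\in\P_d(\ns S)$, while this operation fixes the already-homogeneous $\ns p$ and $\ns h$ and is continuous. Applying it to the limit gives $\ns p=\lim_i(\widehat Q_i+c_i\ns h)$ with each $\widehat Q_i$ homogeneous of degree $d$; writing $q_i:=\widehat Q_i(1,\cdot)$ we get $\ns{q_i}=\widehat Q_i$, hence $q_i\in\P_d(S)$ by property~\eqref{it:4} of Lemma~\ref{lem:tilde+}. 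Dehomogenizing the limit (continuity of $P\mapsto P(1,\cdot)$, with $\ns p(1,\cdot)=p$ and $\ns h(1,\cdot)=h$) yields $p=\lim_i(q_i+c_ih)\in\closure(\P_d(S)+h(x)\R)$, as required.

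The step I expect to be the main obstacle is exactly this passage to homogeneous approximants: the compact certificate of Proposition~\ref{th:compact} produces arbitrary degree-$\le d$ terms $Q_i$, and one must argue they can be taken homogeneous without destroying non-negativity on $\ns S$, which the affine-slice device above accomplishes. A secondary point to watch is the hypothesis of Proposition~\ref{th:compact}, which asks for $\ns h\in\P_d(\ns S)$; this holds automatically in the intended application where $h\in\P_d(S)$ (again by property~\eqref{it:4} of Lemma~\ref{lem:tilde+}), and for a general degree-$d$ polynomial $h$ it is the sign-free span version of the compact identity, of which Proposition~\ref{th:compact} is the non-negative specialization, that is invoked.
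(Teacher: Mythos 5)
Your proof is correct, and it shares the paper's overall strategy---compactify via $x\mapsto\ns x$, invoke Proposition~\ref{th:compact} on the compact set $\ns S$, and pull the certificate back---but it handles the crucial exactness/homogeneity issue by a genuinely different device. The paper never takes limits of the compact-case certificates: it first restricts to those $p$ with $\ns p \in \int\P_d(\ns S\cap\ns h^{-1}(0))$ (nonempty by Lemma~\ref{lem:jaja}), which by convexity upgrades the closure in Proposition~\ref{th:compact} to an exact identity $\ns p = f + q\,\ns h$ with $f\in\P_d(\ns S)$ possibly inhomogeneous; it then dehomogenizes by substituting $\ns x$ and multiplying by $(1+a\tr x)^d$, so that $f(\ns x)(1+a\tr x)^d\in\P_d(S)$ comes out directly, and finally recovers the general case by a closure step. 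Your argument instead works with the limits themselves, using continuity of the linear (de)homogenization maps, and the resulting obstacle---the approximants $Q_i$ need not be homogeneous---is removed by your observation that $\ns S$ lies in the hyperplane $\{x_0+a\tr x=1\}$, so that $\widehat Q_i=\sum_k Q_i^{(k)}\ell^{\,d-k}$ agrees with $Q_i$ on $\ns S$. In fact the two dehomogenizations coincide algebraically, since $f(\ns x)(1+a\tr x)^d=\sum_k f^{(k)}(1,x)(1+a\tr x)^{d-k}=\widehat f(1,x)$; what your route buys is that it bypasses Lemma~\ref{lem:jaja} and the interior-of-a-convex-cone argument altogether, and it makes explicit a step the paper leaves implicit (passing from the interior case back to the full set requires perturbing $\ns p$ by a \emph{homogeneous} element of the interior, essentially $\ell^d$), at the cost of some rehomogenization bookkeeping. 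One caveat, which you rightly flagged and which the paper's own proof shares: applying Proposition~\ref{th:compact} to $(\ns S,\ns h)$ requires $\ns h\in\P_d(\ns S)$, equivalently $h\in\P_d(S)$, a hypothesis missing from the lemma's statement as written; it is satisfied in every use of the lemma (Theorem~\ref{thm:finerEq}), so neither proof is affected in application.
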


\begin{proof}
First, we show that $\closure(\P_d(S) + h(x)\R)
\subseteq \{p(x)\in \R_d[x]: \ns p(x_0,x) \in \P_d( \ns{S} \cap \ns h^{-1}(0))\}$.
Since $\{p(x)\in \R_d[x]: \ns p(x_0,x) \in \P_d( \ns{S} \cap \ns h^{-1}(0))\}$ is closed, it is enough to show
that $\P_d(S) + h(x)\R
\subseteq  \{p(x)\in \R_d[x]: \ns p(x_0,x) \in \P_d( \ns{S} \cap \ns h^{-1}(0))\}$. Let $p(x) = f(x) + qh(x)$
with $f(x) \in \P_d(S)$ and $q \in \R$. Also, let $y:= (x_0,x) \in \ns{S} \cap \ns h^{-1}(0)$, thus $y = \lim_{k\to \infty} \ns x_k$ with $x_k \in S$. Note that
 \begin{align*}
 \ns p(y) &= \lim_{k\to \infty}\ns p(\ns x_k)\\
 & = \lim_{k\to \infty} p(x_k)(1+a\tr x_k)^{-d}  &\text{(by Lemma \ref{lem:tilde+}\eqref{it:3})}\\
  &= \lim_{k\to \infty}(f(x_k) + q h(x_k))(1+a\tr x_k)^{-d}\\
   &= \lim_{k\to \infty} \ns f(\ns x_k) + q\ns h(\ns x_k)&\text{(by Lemma \ref{lem:tilde+}\eqref{it:3})}\\
 &= \ns f(y) + q\ns h(y)\\
 & \ge 0 &\text{(by Lemma \ref{lem:tilde+}\eqref{it:4} and using $\ns h(y) = 0$)}.
 \end{align*}
Now, we show that $\closure(\P_d(S) + h(x)\R)
\supseteq \{p(x)\in \R_d[x]: \ns p(x_0,x) \in \P_d( \ns{S} \cap \ns h^{-1}(0))\}$.
Let $p(x)\in \R_d[x]$ be such that  $\ns p(x_0,x) \in \int\P_d( \ns{S} \cap \ns h^{-1}(0))$, which is non-empty by Lemma~\ref{lem:jaja}~\eqref{lem:jajaint1}.
Since $\ns S$ is compact, then from Proposition~\ref{th:compact} it follows
that $\ns{p}(x_0,x) \in \P_d(\ns S) + \ns h(x)\R$.
Therefore, there exist $f(x_0,x) \in \P_d(\ns S)$ and $q \in \R$ such that $\ns p(x_0,x) = f(x_0,x) + q \ns h(x_0,x)$.
Thus
\begin{align*}
 p(x) &= \ns p (\ns x)(1+a\tr x)^d & \text{(by Lemma \ref{lem:tilde+}\eqref{it:3})}\\
  &=  (f(\ns x) + q\ns h(\ns x))(1+a\tr x)^d\\
   &= f(\ns x)(1+a\tr x)^d + q h(x) &  \text{(by Lemma \ref{lem:tilde+}\eqref{it:3})}.
   \end{align*}
For any polynomial $f \in \R_d[x_0,x]$, $f(\ns x)(1+a\tr x)^d$ is a polynomial, thus we have $p(x) \in \P_d(S) + h(x)\R$, that is
\[
 \{p(x) \in \R_d[x]: \ns p(x_0,x) \in \int\P_d( \ns{S} \cap \ns h^{-1}(0))\}
\subseteq  \P_d(S) + h(x)\R.
\]
Therefore
\begin{align*}
 \{p(x) \in \R_d[x]: \ns p(x_0,x) \in \P_d( \ns{S} \cap \ns h^{-1}(0))\}
&\subseteq \closure\{p(x) \in \R_d[x]: \ns p(x_0,x) \in \int\P_d( \ns{S} \cap \ns h^{-1}(0))\}\\
&\subseteq  \closure(\P_d(S) + h(x)\R).
\end{align*}
\end{proof}

\begin{lemma}
\label{lem:condition}
Let $S \subseteq K$. Then
$\{x \in \rec{S}: a\tr x = 1\} =  \{ x \in K: (0,x) \in \ns{S}\}$.
\end{lemma}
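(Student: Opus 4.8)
The plan is to read off both inclusions directly from the definitions, exploiting the single observation that the compactification map is, up to its leading coordinate, exactly the rescaling appearing in Definition~\ref{def:horizon}. Indeed, for $x \in \R^n$ one has $\ns x = \lambda\,(1,x\tr)\tr$ with the scalar $\lambda = \frac{1}{1+a\tr x}$, so the leading coordinate of $\ns x$ is $\lambda$ and its last $n$ coordinates are $\lambda x$. The entire argument is then a translation between the scalar $\lambda^k \downarrow 0$ witnessing membership in $\rec S$ and the vanishing of the leading coordinate of $\ns{x^k}$.

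First I would prove $\{x \in K: (0,x) \in \ns S\} \subseteq \{x \in \rec S: a\tr x = 1\}$. Given $x \in K$ with $(0,x) \in \ns S$, by definition of $\ns S$ there is a sequence $x^k \in S$ with $\ns{x^k} \to (0,x)$. Set $\lambda^k := \frac{1}{1+a\tr x^k}$, which lies in $(0,1]$ since $x^k \in S \subseteq K$ forces $a\tr x^k \ge 0$. The leading coordinate of $\ns{x^k}$ is $\lambda^k$ and the remaining coordinates are $\lambda^k x^k$, so $\ns{x^k} \to (0,x)$ says precisely that $\lambda^k \to 0$ and $\lambda^k x^k \to x$, i.e. $x \in \rec S$. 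Moreover, from $\lambda^k = \frac{1}{1+a\tr x^k}$ one gets the identity $\lambda^k\,a\tr x^k = 1 - \lambda^k$, so applying $a\tr(\cdot)$ to $\lambda^k x^k \to x$ yields $a\tr x = \lim_k(1-\lambda^k) = 1$.

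For the reverse inclusion I would take $x \in \rec S$ with $a\tr x = 1$ and a witnessing sequence $x^k \in S$, $\lambda^k \downarrow 0$, $\lambda^k x^k \to x$. Since $S \subseteq K$ and $K$ is a closed convex cone, each $\lambda^k x^k \in K$, whence $x = \lim_k \lambda^k x^k \in K$. To conclude $(0,x) \in \ns S$ I would check $\ns{x^k} \to (0,x)$: applying $a\tr(\cdot)$ to $\lambda^k x^k \to x$ gives $\lambda^k\,a\tr x^k \to a\tr x = 1$, so with $\lambda^k \to 0$ we obtain $a\tr x^k \to +\infty$ and hence the leading coordinate $\frac{1}{1+a\tr x^k} \to 0$; the remaining coordinates satisfy $\frac{x^k}{1+a\tr x^k} = \frac{\lambda^k x^k}{\lambda^k + \lambda^k a\tr x^k} \to \frac{x}{0+1} = x$. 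Thus $\ns{x^k} \to (0,x)$ and $(0,x) \in \ns S$.

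The computations are elementary, so there is no deep obstacle; the one point requiring care is the legitimacy of passing between the two scalings, i.e. ensuring the scalar in the denominators is eventually nonzero. This is guaranteed precisely by the normalization $a\tr x = 1$: the limit $\lambda^k\,a\tr x^k \to 1 \neq 0$ forces $\lambda^k > 0$ (and $a\tr x^k \neq 0$) for all large $k$, after which the finitely many remaining indices may be discarded and the division above is valid. One should also note in the first inclusion that $x^k \in K$ keeps $\lambda^k \in (0,1]$, so convergence of the leading coordinate to $0$ genuinely forces $a\tr x^k \to \infty$ rather than any degenerate behavior.
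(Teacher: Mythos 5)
Your proof is correct and takes essentially the same route as the paper's: both inclusions are read off from the definitions by identifying the scalar $\frac{1}{1+a\tr x^k}$ with the rescaling $\lambda^k$ in the definition of the horizon cone, with the same limit computations in each direction. The only immaterial differences are that you explicitly verify $x \in K$ and the nonvanishing of the denominators (points the paper glosses over), while omitting the paper's one-line remark that one passes to a subsequence so that $\lambda^k \downarrow 0$ monotonically, as Definition~\ref{def:horizon} formally requires.
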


\begin{proof} To show
 $\{x \in \rec{S}: a\tr x = 1\}  \supseteq \{ x \in K: (0,x) \in \ns{S}\}$, let $(0, x) \in \ns{S}$ with $x \in K$. Then, there are $x_k \in S$, $k = 1,\dots$ such that
\[
\begin{pmatrix}
0 \\ x
\end{pmatrix} = \lim_{k \to \infty,} \begin{pmatrix}  \frac{1}{1+a\tr x_k} \\  \frac{x_k}{1+a\tr x_k}\end{pmatrix}.
\]
Define $\lambda_k  = \frac{1}{1+a\tr x_k}$, using \eqref{eq:adef} we have $\lambda_k > 0$
for all $k=1,\dots$. Also
$\lim_{k \to \infty} \lambda_k  = 0$, and $x = \lim_{k \to \infty} \lambda_k x_k$. Taking a subsequence if necessary, we can assume $\lambda_k \downarrow 0$ and
it follows that~$x \in \rec{S}$.
Furthermore, notice that
\[
a\tr x = \lim_{k \to \infty} \frac{a\tr x_k}{1 + a\tr x_k} =  1-\lim_{k \to \infty}\frac{1}{1 + a\tr x_k}= 1.
\]
To show
 $\{x \in \rec{S}: a\tr x = 1\}  \subseteq \{ x \in K: (0,x) \in \ns{S}\}$,
 let $x \in \rec{S}$ be such that $a\tr x = 1$. Then $x = \lim_{k \to \infty} \lambda_k x_k$, where  $\lambda_k \downarrow 0$, and $x_k \in S, k=1,\dots$.  Also, \[\lim_{k \to \infty} \lambda_k a\tr x_k = a\tr x = 1.\]
Thus we can write
\begin{align*}
x
 =  \frac{x}{0 + 1}
 =  \frac{\lim_{k \to \infty} \lambda_k x_k}{\lim_{k \to \infty}  \lambda_k + \lambda_k (a\tr x_k)}
 = \lim_{k \to \infty} \frac{\lambda_k x_k}{\lambda_k (1+ a\tr x_k)}
 = \lim_{k \to \infty} \frac{x_k}{1 + a\tr x_k},
\end{align*}
and
\begin{align*}
0 = \frac 0{0+1}
 =  \frac{\lim_{k \to \infty} \lambda_k }{\lim_{k \to \infty} \lambda_k +\lim_{k \to \infty} \lambda_k (a\tr x_k)}
 = \lim_{k \to \infty} \frac{\lambda_k }{\lambda_k (1+ a\tr x_k)}
 = \lim_{k \to \infty} \frac{1}{1 + a\tr x_k}.
\end{align*}
Thus $(0, x) \in \ns{S}$.
\end{proof}

\begin{lemma}
\label{lem:z0pos}
Let $S \subseteq K$ and
$(x_0,x) \in \ns{S}$ be such that $x_0 > 0$.  Then $(x_0,x) = \ns{x}$ where $x \in \closure\{S\}$.
\end{lemma}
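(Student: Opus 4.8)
The plan is to recover a preimage point directly from a sequence realizing $(x_0,x)$ as a limit in $\ns S$, using the hypothesis $x_0>0$ to keep the relevant denominators bounded away from both $0$ and $\infty$. First I would invoke the definition~\eqref{eq:setbar} of $\ns S$ to obtain points $y_k \in S$, $k=1,2,\dots$, with $\ns{y_k} \to (x_0,x)$; reading off the two blocks of the map~\eqref{eq:comp}, this says precisely that
\[
\frac{1}{1+a\tr y_k} \longrightarrow x_0 \qquad\text{and}\qquad \frac{y_k}{1+a\tr y_k} \longrightarrow x.
\]

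Next, since $y_k \in S \subseteq K$, property~\eqref{eq:adef} gives $a\tr y_k \ge 0$, so $1+a\tr y_k \ge 1$ for every $k$; combined with $\tfrac{1}{1+a\tr y_k} \to x_0 > 0$ this forces $1+a\tr y_k \to 1/x_0$, a finite positive limit. This is the one place where the hypothesis $x_0>0$ is essential: it is exactly what prevents the denominators from blowing up and lets me undo the normalization. Writing $y_k = \tfrac{y_k}{1+a\tr y_k}\,(1+a\tr y_k)$ as a product of two convergent sequences, I then obtain $y_k \to x/x_0 =: z$. Because each $y_k \in S$, the limit satisfies $z \in \closure\{S\}$.

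Finally, I would verify $\ns z = (x_0,x)$ by a short continuity computation. Since $y_k \to z$, continuity of $a\tr(\cdot)$ together with the already-established limit $1+a\tr y_k \to 1/x_0$ gives $1+a\tr z = 1/x_0$; hence $\tfrac{1}{1+a\tr z} = x_0$ and $\tfrac{z}{1+a\tr z} = x_0\, z = x$, which is precisely $\ns z = (x_0,x)$ by~\eqref{eq:comp}. This identifies the point $z \in \closure\{S\}$ claimed in the statement.

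I do not anticipate a genuine obstacle here, as the argument is a direct limit manipulation. The only subtlety worth flagging is that convergence of the \emph{normalized} sequence $\tfrac{y_k}{1+a\tr y_k}$ does not by itself yield convergence of $y_k$; one must first pin down the denominators, which is where $x_0>0$ enters, and only then recover $y_k$ itself. Once that order of steps is respected, checking that $z = x/x_0$ reproduces $(x_0,x)$ under the map~\eqref{eq:comp} is routine.
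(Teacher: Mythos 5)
Your proof is correct and follows essentially the same route as the paper's: take a sequence $y_k \in S$ with $\ns{y_k} \to (x_0,x)$, use $x_0>0$ to pin down the denominators $1+a\tr y_k \to 1/x_0$, recover the limit point in $\closure\{S\}$, and finish by continuity. If anything, your product-of-limits step $y_k = \tfrac{y_k}{1+a\tr y_k}\,(1+a\tr y_k) \to x/x_0$ is slightly tighter than the paper's version, which deduces boundedness of the sequence and then asserts convergence (strictly, boundedness only yields a convergent subsequence), so your ordering of the steps closes that small gap while needing nothing beyond the definitions.
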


\begin{proof}
Let $y:=(x_0, x) \in \ns{S}$ with $x_0 > 0$ be given. Also, let $x_k \in S, k=1,\dots$ be a sequence
satisfying $ y = \lim_{k \to \infty} ( \frac{1}{1+ a\tr x_k}, \frac{x_k\tr}{1+ a\tr x_k})\tr$.
From $x_0 = \lim_{k \to \infty} \frac{1}{1+ a\tr x_k} >0$
it follows that the sequence $x_k \in S, k=1,\dots$ is bounded. Therefore, there exists $x \in \closure\{S\}$
such that $\lim_{k \to \infty}  x_k = x$. Then $y = \lim_{k \to \infty} (\frac{1}{1+ a\tr x_k}, \frac{x_k\tr}{1+ a\tr x_k})\tr
= (\frac{1}{1+ a\tr x}, \frac{x\tr}{1+ a\tr x}) = \ns{x}$.
\end{proof}

Notice that Lemma~\ref{lem:tilde+}\eqref{it:2}, allows to write $\tilde{h}$ in terms of $\bar{h}$ for any
$h \in \R_d[x]$ with $\deg(h) = d$. Also,
Lemma~\ref{lem:condition} allows to write $\rec S$ in terms of $\ns{S}$
for any set $S \subseteq K$.
Lemma~\ref{lem:zzero} below,
characterizes cases in which~\eqref{eq.condition}
can be equivalently formulated using the maps~\eqref{eq:popbar} and~\eqref{eq:setbar}.

\begin{lemma}
\label{lem:zzero}
Let $S \subseteq K$ be closed and
$(x_0, x) \in \ns{S}$ be such that $x_0 > 0$. Then $(x_0, x) \in \ns{S \cap h^{-1}(0)}$ if and only if $(x_0, x) \in \ns{S}\cap \ns{h}^{-1}(0)$.
\end{lemma}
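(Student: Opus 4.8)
The statement has the standing hypothesis $(x_0,x)\in\ns{S}$ and $x_0>0$, so in each direction I only need to produce (or exploit) the extra condition $\ns{h}(x_0,x)=0$. The plan is to prove the two implications separately. The forward direction will be a routine continuity argument and will not actually use $x_0>0$; the reverse direction is where $x_0>0$ is essential, since it lets me pull the abstract point $(x_0,x)\in\ns{S}$ back to a genuine point of $S$ via Lemma~\ref{lem:z0pos}. The whole argument is powered by Lemma~\ref{lem:tilde+}\eqref{it:3}, which relates $h$ to its homogenization through $h(w)=\ns{h}(\ns{w})(1+a\tr w)^{d}$, together with the elementary fact that $1+a\tr w>0$ for every $w\in K$ (indeed $a\tr w\ge 0$ on $K$ by \eqref{eq:adef}).

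\textbf{Forward direction ($\Rightarrow$).} Assume $(x_0,x)\in\ns{S\cap h^{-1}(0)}$. First I would note that $S\cap h^{-1}(0)\subseteq S$ gives $\ns{S\cap h^{-1}(0)}\subseteq\ns{S}$ directly from the definition \eqref{eq:setbar}, so $(x_0,x)\in\ns{S}$ is automatic. To get $\ns{h}(x_0,x)=0$, I would take a sequence $w_k\in S\cap h^{-1}(0)$ with $\ns{w_k}\to(x_0,x)$. Since $h(w_k)=0$ and $1+a\tr w_k>0$, Lemma~\ref{lem:tilde+}\eqref{it:3} forces $\ns{h}(\ns{w_k})=0$ for every $k$. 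As $\ns{h}$ is a polynomial and hence continuous, passing to the limit yields $\ns{h}(x_0,x)=\lim_{k\to\infty}\ns{h}(\ns{w_k})=0$, so $(x_0,x)\in\ns{S}\cap\ns{h}^{-1}(0)$.

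\textbf{Reverse direction ($\Leftarrow$).} Assume $(x_0,x)\in\ns{S}\cap\ns{h}^{-1}(0)$ with $x_0>0$. Here I would invoke Lemma~\ref{lem:z0pos}: because $x_0>0$, there is $w\in\closure\{S\}$ with $(x_0,x)=\ns{w}$, and since $S$ is closed, $w\in S$. Now $\ns{h}(\ns{w})=\ns{h}(x_0,x)=0$, and applying Lemma~\ref{lem:tilde+}\eqref{it:3} in the other direction gives $h(w)=\ns{h}(\ns{w})(1+a\tr w)^{d}=0$ (using $1+a\tr w>0$). Hence $w\in S\cap h^{-1}(0)$, and therefore $(x_0,x)=\ns{w}\in\ns{S\cap h^{-1}(0)}$ by \eqref{eq:setbar}.

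\textbf{Main obstacle.} There is no serious computational difficulty; the subtle conceptual point is the asymmetry in the roles of $x_0>0$. The forward inclusion holds for all points of $\ns{S}$, but the reverse genuinely requires $x_0>0$: only then is $(x_0,x)$ a ``finite'' point that Lemma~\ref{lem:z0pos} realizes as $\ns{w}$ for an honest $w\in S$, allowing the zero of $\ns{h}$ to be transported back to a zero of $h$. For points with $x_0=0$ (points ``at infinity'') this pullback is unavailable, which is precisely why the hypothesis $x_0>0$ appears in the statement. The only routine care needed is to confirm the normalizing factors $1+a\tr w$ are strictly positive, which is immediate from the choice of $a$ in \eqref{eq:adef} and $w\in K$.
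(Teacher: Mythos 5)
Your proof is correct. The reverse direction is exactly the paper's argument: use Lemma~\ref{lem:z0pos} (via $x_0>0$ and closedness of $S$) to write $(x_0,x)=\ns{w}$ for an honest $w\in S$, then transport the zero with the identity $h(w)=\ns{h}(\ns{w})(1+a\tr w)^{d}$. The forward direction is where you genuinely diverge: the paper invokes Lemma~\ref{lem:z0pos} a \emph{second} time, applied to the set $S\cap h^{-1}(0)$, to realize $(x_0,x)$ as $\ns{w}$ with $w\in S\cap h^{-1}(0)$ and then push the zero forward --- so the paper's forward direction also consumes the hypothesis $x_0>0$ (and tacitly uses that $S\cap h^{-1}(0)$ is closed, so that Lemma~\ref{lem:z0pos} applies to it). You instead take a defining sequence $w_k\in S\cap h^{-1}(0)$ with $\ns{w_k}\to(x_0,x)$ as in \eqref{eq:setbar}, get $\ns{h}(\ns{w_k})=0$ from Lemma~\ref{lem:tilde+}\eqref{it:3} because $1+a\tr w_k>0$, and finish by continuity of the polynomial $\ns{h}$. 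This buys two things: it is more elementary (no second pullback, no closedness remark needed), and it is strictly more general --- as you observe, it establishes $\ns{S\cap h^{-1}(0)}\subseteq\ns{S}\cap\ns{h}^{-1}(0)$ for \emph{all} points, including those at infinity with $x_0=0$, correctly isolating $x_0>0$ as needed only for the reverse inclusion. One further small point in your favor: the homogenization identity you use is item \eqref{it:3} of Lemma~\ref{lem:tilde+}, which is the right citation; the paper's own proof cites item \eqref{it:4} for that identity, which is a typo.
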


\begin{proof}
Let $y:=(x_0,x) \in \ns{S \cap h^{-1}(0)}$. From Lemma~\ref{lem:z0pos} it follows that $y = \ns{x}$ with $x \in S \cap h^{-1}(0) \subseteq S$.
Thus $y \in \ns{S}$. Also, from $x \in h^{-1}(0)$ and  Lemma~\ref{lem:tilde+}\eqref{it:4} it follows that
$0 = h(x) = \ns{h}(\ns{x})(1 + a\tr x)^d = \ns{h}(y)(1 + a\tr x)^d$. Since for any $x \in S$, $(1+ a\tr x) > 0$ (from~\eqref{eq:adef}),
then $\ns{h}(y) = 0$. Thus $y \in \ns{S}\cap \ns{h}^{-1}(0)$. For the other direction, now let $y \in \ns{S}\cap \ns{h}^{-1}(0)$. Since $y \in \ns{S}$ from
Lemma~\ref{lem:z0pos} it follows that $y = \ns{x}$ with $x \in S$. Also, from $y \in \ns{h}^{-1}(0)$ and  Lemma~\ref{lem:tilde+}\eqref{it:4} it follows that
$0 = \ns{h}(y) = \ns{h}(\ns{x}) = h(x)(1 + a\tr x)^{-d} = h(x)x_0^d$. Since $x_0 > 0$
then $h(x) = 0$. Thus $y \in \ns{S \cap h^{-1}(0)}$.
\end{proof}

Now we are ready to present the two main building blocks for the proof of Theorem~\ref{thm:finerEqGen}.

\begin{theorem}\label{thm:finerEq}
Let $S\subseteq K$ be a closed set and $h \in \P_d(S)$ be of degree $d$.
Then,
\begin{equation}
\label{eq:finer}
\closure(\P_d(S) + h(x)\R)= \{p(x) \in \P_d(S\cap h^{-1}(0)): \tilde p(x) \in \P_d( \rec{S} \cap \tilde h^{-1}(0))\}.
\end{equation}
\end{theorem}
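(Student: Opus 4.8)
The plan is to transport the statement to the compactified setting and then read off the two conditions—one on $S\cap h^{-1}(0)$ and one on the horizon cone—from the behaviour of $\ns p$ on the two natural pieces of the compact set $\ns S \cap \ns h^{-1}(0)$. Since $h$ has degree $d$, Lemma~\ref{lem:barclosure} already identifies the left-hand side of \eqref{eq:finer} with $\{p \in \R_d[x]: \ns p \in \P_d(\ns S \cap \ns h^{-1}(0))\}$, so it suffices to prove that this set equals $\{p \in \P_d(S \cap h^{-1}(0)): \tilde p \in \P_d(\rec S \cap \tilde h^{-1}(0))\}$. The first step I would take is to note that $\ns x \in \R_+ \times K$ for every $x \in K$, hence $\ns S \subseteq \R_+ \times K$ and every $(x_0,x) \in \ns S \cap \ns h^{-1}(0)$ has $x_0 \ge 0$; consequently $\ns p \ge 0$ on this set (compact by Lemma~\ref{lem:barCompact}) is equivalent to $\ns p \ge 0$ on its slice $\{x_0 > 0\}$ together with $\ns p \ge 0$ on its slice $\{x_0 = 0\}$. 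The whole argument then amounts to showing that these two requirements are exactly $p \in \P_d(S \cap h^{-1}(0))$ and $\tilde p \in \P_d(\rec S \cap \tilde h^{-1}(0))$.

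For the slice $x_0 > 0$ I would argue as follows. Given $(x_0,x) \in \ns S \cap \ns h^{-1}(0)$ with $x_0 > 0$, Lemma~\ref{lem:zzero} places it in $\ns{S \cap h^{-1}(0)}$, and Lemma~\ref{lem:z0pos} (applied to the closed set $S \cap h^{-1}(0) \subseteq K$) writes it as $\ns{x'}$ with $x' \in S \cap h^{-1}(0)$; conversely every $x' \in S \cap h^{-1}(0)$ produces such a point. By Lemma~\ref{lem:tilde+}\eqref{it:3}, $\ns p(\ns{x'}) = p(x')(1+a\tr x')^{-d}$, and the factor $(1+a\tr x')^{-d}$ is strictly positive (as $a\tr x' \ge 0$ on $K$), so $\ns p \ge 0$ on the slice $x_0 > 0$ is equivalent to $p \ge 0$ on $S \cap h^{-1}(0)$, i.e. to $p \in \P_d(S \cap h^{-1}(0))$.

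For the slice $x_0 = 0$ I would use that $\ns p$ and $\ns h$ evaluated at $(0,x)$ equal $\tilde p(x)$ and $\tilde h(x)$ by Lemma~\ref{lem:tilde+}\eqref{it:2}, while Lemma~\ref{lem:condition} identifies $\{x : (0,x) \in \ns S\}$ with the affine slice $\{x \in \rec S : a\tr x = 1\}$. Thus the slice $x_0 = 0$ of $\ns S \cap \ns h^{-1}(0)$ is $\{(0,x) : x \in \rec S \cap \tilde h^{-1}(0),\ a\tr x = 1\}$, and $\ns p \ge 0$ there means $\tilde p(x) \ge 0$ for every $x$ in that slice. The step that needs care, and which I expect to be the main obstacle, is upgrading this affine-slice condition to $\tilde p \in \P_d(\rec S \cap \tilde h^{-1}(0))$ on the full cone: here I would use that $\rec S \subseteq \rec K = K$, so $\rec S \cap \tilde h^{-1}(0)$ is a cone inside $K$ on which $a\tr y > 0$ for $y \neq 0$ by \eqref{eq:adef}; scaling any nonzero $y$ to $y/(a\tr y)$ lands in the slice and keeps $\tilde h = 0$ (as $\tilde h$ is homogeneous), and homogeneity of $\tilde p$ then gives $\tilde p(y) = (a\tr y)^{\deg(p)}\,\tilde p(y/(a\tr y)) \ge 0$, with $\tilde p(0) = 0$ covering the apex.

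Combining the two slices, $\ns p \in \P_d(\ns S \cap \ns h^{-1}(0))$ holds if and only if both $p \in \P_d(S \cap h^{-1}(0))$ and $\tilde p \in \P_d(\rec S \cap \tilde h^{-1}(0))$, which is precisely the right-hand side of \eqref{eq:finer}; together with Lemma~\ref{lem:barclosure} this closes the proof. Essentially all of the analytic content is carried by Lemma~\ref{lem:barclosure} (which itself rests on the compact case, Proposition~\ref{th:compact}) and by the dictionary between $S$, $\rec S$, and $\ns S$ supplied by Lemmas~\ref{lem:condition}, \ref{lem:z0pos}, and \ref{lem:zzero}; the one genuinely delicate point is the passage at infinity ($x_0 = 0$), where the homogeneity of $\tilde p$ and $\tilde h$ must be used to move between the bounded slice $a\tr x = 1$ and the entire horizon cone.
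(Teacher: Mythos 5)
Your proposal is correct and follows essentially the same route as the paper's proof: both reduce via Lemma~\ref{lem:barclosure} to the non-negativity of $\ns p$ on the compact set $\ns{S} \cap \ns{h}^{-1}(0)$, split that set into the parts with $x_0>0$ and $x_0=0$, and translate these via Lemmas~\ref{lem:zzero}, \ref{lem:z0pos}, \ref{lem:condition}, and~\ref{lem:tilde+} (using homogeneity to pass between the slice $a\tr x = 1$ and the full horizon cone) into the two conditions on the right-hand side of \eqref{eq:finer}. The only cosmetic difference is that you organize the argument as a slice-by-slice equivalence, whereas the paper proves two inclusions and obtains $p \in \P_d(S\cap h^{-1}(0))$ directly from Lemma~\ref{lem:subset} in the forward direction.
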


\begin{proof}
Let $p(x) \in \closure(\P_d(S) + h(x)\R)$. By Lemma~\ref{lem:subset}, $p(x) \in \P_d( S \cap h^{-1}(0))$.
From Lemma~\ref{lem:barclosure} we also have $\ns p(y) \in \P_d( \ns{S} \cap \ns{h}^{-1}(0)).$
Given $x \in \rec{S} \cap {\tilde h}^{-1}(0)$, let $y := (0,\frac x{a\tr x})$. From Lemma~\ref{lem:condition} we have $y \in \ns{S}$. Also, from Lemma~\ref{lem:tilde+}\eqref{it:2} it follows that $\ns{h}(y) = \tilde h(\frac x{a\tr x}) = \tilde h(x)(a\tr x)^{-\deg(h)} =0$.
Then $y \in \ns{S} \cap \ns{h}^{-1}(0)$ and thus $0 \le \ns p(y) = \tilde p(\frac x{a\tr x}) = \tilde p(x)(a\tr x)^{-\deg(p)}$ which implies $\tilde p(x) \ge 0$. For the other inclusion,
let $p(x) \in \P_d( S \cap h^{-1}(0))$ be such that $\tilde p(x) \in \P_d( \rec S \cap \tilde h^{-1}(0))$. From Lemma~\ref{lem:barclosure} is enough to show that $\ns p(y) \in \P_d( \ns{S} \cap \ns{h}^{-1}(0))$. Now given $y:=(x_0,x) \in \ns{S} \cap \ns{h}^{-1}(0)$ we consider two cases. If $x_0 > 0$, then by Lemma~\ref{lem:zzero} we have $y \in \ns{S \cap h^{-1}(0)}$. Thus, by Lemma~\ref{lem:tilde+}\eqref{it:4} it follows that $\ns p(y) \ge 0$. If $x_0 = 0$, by Lemma~\ref{lem:condition}
we have $y = (0,x)$ where $x \in \rec S$ and $a\tr x =1$. Using Lemma~\ref{lem:tilde+}\eqref{it:2} we have $\tilde h(x) = \ns h(y) = 0$, that is $x \in \rec S \cap \tilde h^{-1}(0)$. Then, applying Lemma~\ref{lem:tilde+}\eqref{it:2} again, we have $\ns p(y) = \tilde p(x)  \ge 0$.
\end{proof}

\begin{lemma}\label{lem:jaja}
Let $S \subseteq K$ be a closed set, and $a \in \R^n$ be defined by \eqref{eq:adef}. Then for all $d \in \N$,
\begin{enumerate}[(i)]
\item $\int \P_{d}(S) - \int \P_{d}(S) = \R_d[x]$,
\label{lem:jajaint2}
\item $\int \P_{d}(S) \neq \emptyset$. In particular, $g(x):= (1+a\tr x)^d \in \int \P_{d}(S)$, and $\tilde{g}(x) > 0$ for all $x \in K\setminus\{0\}$.
\label{lem:jajaint1}
\end{enumerate}
\end{lemma}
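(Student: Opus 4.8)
The plan is to establish part~\eqref{lem:jajaint1} first, since part~\eqref{lem:jajaint2} will follow from it almost formally. In part~\eqref{lem:jajaint1} the assertion $\tilde g(x)=(a\tr x)^d$ is immediate from $g(x)=(1+a\tr x)^d=\sum_{k=0}^d\binom{d}{k}(a\tr x)^k$ (the top-degree term being $(a\tr x)^d$), so that $\tilde g(x)>0$ for $x\in K\setminus\{0\}$ by~\eqref{eq:adef}. The substantive claim is $g\in\int\P_d(S)$, and I would prove it by passing to the compactification of Section~\ref{sec:proofs}. The map $\phi\colon p\mapsto\ns p$ is a linear isomorphism from $\R_d[x]$ onto the space $H_d$ of homogeneous polynomials of degree $d$ in $(x_0,x)$ (with inverse $q\mapsto q(1,\cdot)$), hence a homeomorphism; by Lemma~\ref{lem:tilde+}\eqref{it:4} it carries $\P_d(S)$ onto $C:=\{q\in H_d: q\ge 0\text{ on }\ns S\}$. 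Consequently $\phi(\int\P_d(S))=\int C$, and it suffices to show $\ns g\in\int C$.

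The key computation is that $\ns g(x_0,x)=(x_0+a\tr x)^d$, and that on each point $\ns y=\tfrac{1}{1+a\tr y}(1,y)$ with $y\in S$ one has $x_0+a\tr x=\tfrac{1+a\tr y}{1+a\tr y}=1$; thus $\ns g\equiv 1$ on the dense subset $\{\ns y:y\in S\}$ and, by continuity, $\ns g\equiv 1$ on $\ns S$. It then remains to observe that any $q\in H_d$ with $q>0$ on $\ns S$ lies in $\int C$, and this is exactly where compactness enters. By Lemma~\ref{lem:barCompact} the set $\ns S$ is compact, so $q\ge c>0$ on $\ns S$ for some $c>0$, and $\ns S$ is contained in some ball $\{\|y\|\le R\}$. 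Since the evaluation $r\mapsto r(y)$ is bounded uniformly over $\|y\|\le R$ by a fixed multiple of the coefficient norm of $r$, every $r\in H_d$ of sufficiently small norm satisfies $|r(y)|<c$ for all $y\in\ns S$, whence $(q+r)(y)\ge c-|r(y)|>0$ and $q+r\in C$. Applying this with $q=\ns g$ gives $\ns g\in\int C$, hence $g\in\int\P_d(S)$.

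For part~\eqref{lem:jajaint2}, the inclusion $\int\P_d(S)-\int\P_d(S)\subseteq\R_d[x]$ is trivial. For the reverse, I would give the explicit splitting: $\P_d(S)$ is a convex cone, so $\int\P_d(S)$ is a (nonempty, by part~\eqref{lem:jajaint1}) scale-invariant open convex cone. Given $p\in\R_d[x]$, the element $tg$ lies in $\int\P_d(S)$ for every $t>0$, while $tg+p\in\int\P_d(S)$ for all large $t$: indeed $\phi(tg+p)=t\ns g+\ns p=t+\ns p$ on $\ns S$ (using $\ns g\equiv1$), which is positive once $t>\max_{\ns S}(-\ns p)$ (finite by compactness of $\ns S$), so $t\ns g+\ns p\in\int C$ by the perturbation argument above. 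Writing $p=(tg+p)-tg$ then exhibits $p$ as a difference of two elements of $\int\P_d(S)$, proving $\int\P_d(S)-\int\P_d(S)=\R_d[x]$.

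The main obstacle is part~\eqref{lem:jajaint1}: over an unbounded $S$ the mere positivity of $g$ on $S$ does not place $g$ in the interior of $\P_d(S)$, because an arbitrarily small coefficient perturbation can destroy nonnegativity near infinity. The purpose of the homogenization is precisely to convert this ``behaviour at infinity'' (recorded by $\tilde g>0$ on $\rec S$) into ordinary strict positivity on the \emph{compact} set $\ns S$, after which membership in the interior reduces to the routine uniform-perturbation estimate carried out above.
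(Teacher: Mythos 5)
Your proof is correct, and for the substantive part (ii) it takes a genuinely different route from the paper's. The paper works directly on the unbounded set: expanding an arbitrary $p \in \R_d[x]$ in a scaled monomial basis it shows $|p(x)| \le (d+1)\|p\|_\infty \max\{1, n^{d/2}\|x\|^d\}$ on $S$, while Lemma~\ref{lem:alpha} gives $g(x) = (1+a\tr x)^d \ge (1+\alpha\|x\|)^d > \max\{1,(\alpha\|x\|)^d\}$ on $K$; comparing these two growth rates produces an explicit admissible perturbation size, $|\epsilon| \le \frac{1}{(d+1)\|p\|_\infty}\min\{1,(\alpha n^{-1/2})^d\}$, hence an explicit ball around $g$ inside $\P_d(S)$. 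You instead transport everything through the homogenization isomorphism $p \mapsto \ns{p}$, observe that $\ns{g} \equiv 1$ on $\ns{S}$, and use compactness of $\ns{S}$ (Lemma~\ref{lem:barCompact}) to reduce interior membership to the routine fact that strict positivity on a compact set survives small coefficient perturbations. The two arguments are close cousins---compactness of $\ns{S}$ is itself deduced from Lemma~\ref{lem:alpha}, the very estimate the paper uses directly---but yours is more conceptual, reuses the Section~\ref{sec:proofs} machinery (Lemmas~\ref{lem:tilde+} and~\ref{lem:barCompact}), and isolates the clean geometric reason $g$ is interior: its homogenization is the constant $1$ on the compact model of $S$. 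The paper's version buys self-containedness and a quantitative bound. For part (i) the two proofs are formally the same trick with a nonempty open convex cone: the paper writes $p = \frac{1}{\epsilon}q - \frac{1}{\epsilon}(q-\epsilon p)$ with $q$ interior and $\epsilon$ small, you write $p = (tg+p) - tg$ with $t$ large; your version re-invokes compactness of $\ns{S}$ to choose $t$, which is harmless but unnecessary, since once (ii) is known, openness of $\int\P_d(S)$ together with positive scaling already gives the splitting, as in the paper.
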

\begin{proof}

Clearly, $\int \P_{d}(S) - \int \P_{d}(S) \subseteq \R_d[x]$. We need to show $\int \P_{d}(S) - \int \P_{d}(S) \supseteq \R_d[x]$, but we claim it follows from~\eqref{lem:jajaint1}.
To see this, take $q(x) \in \int \P_{d}(S)$. For any $p(x) \in \R_d[x]$, there exists $\epsilon >0$
such that $q(x) - \epsilon p(x) \in \int \P_{d}(S)$. Thus $p(x) = \frac{1}{\epsilon} q(x) - (\frac{1}{\epsilon} q(x) - p(x)) \in \int \P_{d}(S) - \int \P_{d}(S)$.

 To prove \eqref{lem:jajaint1}, we show that
$g(x):=(1+ a\tr x)^d \in \int \P_{d}(S)$, where $a$ is defined by \eqref{eq:adef}. Let $\Gamma_d = \{\beta \in \N^n:\|\beta\|_1 \le d\}$, given $p(x) \in \R_d[x]$, write
$p(x) = \sum_{\beta \in \Gamma_d} p_\beta {\|\beta\|_1 \choose \beta} x^\beta$, where for any $x \in \R^n$, $x^\beta := x_1^{\beta_1} \cdots x_n^{\beta_n}$.
Then, for any $x \in S$ we have
\begin{align*}
p(x) &\le |p(x)| \le \|p\|_\infty \sum_{\beta \in \Gamma_d} {\|\beta\|_1 \choose \beta} |x|^\beta = \|p\|_\infty \sum_{k=0}^d(e\tr |x|)^k \\
&\le (d+1)  \|p\|_\infty  \max \{1, (\|x\|_1)^d\}
\le (d+1) \|p\|_\infty \max\{1, n^{d/2}\|x\|^d\},
\end{align*}
 where  for any $x \in \R^n$, $|x| :=(|x_1|, \dots, |x_n|)\tr$.
On the other hand, from Lemma~\ref{lem:alpha} there is $\alpha >0$ such that $a\tr x \ge \alpha \|x\|$ for all $x \in K$.
Thus
\[g(x) = (1+a\tr x)^d \ge (1+\alpha \|x\|)^d > \max\{1, (\alpha \|x\|)^d\}
\]
 for all $x \in K$.
Let $|\epsilon |\le \frac 1{(d+1) \|p\|_\infty} \min\{ 1, (\alpha n^{-1/2})^d\}$, we have then that $g(x)+ \epsilon p(x) \in \P_{d}(S)$. Finally, notice that from~\eqref{eq:adef}, it follows that $\tilde{g}(x) = (a\tr x)^d > 0$ for all $x \in K\setminus \{0\}$.
%\[
%\begin{array}{ll}
%c(1+a\tr x)^d & >   d \|p\|_\infty \max\{ 1, (\frac{n^{1/2}}{\alpha})^d\} \max\{1, (\alpha \|x\|)^d\}\\
%                     & > d \|p\|_\infty \max\{1,  (\frac{n^{1/2}}{\alpha})^d,(\alpha \|x\|)^d, n^{d/2}\|x\|)^d\\
%                     & > d \|p\|_\infty \max\{1, n^{d/2}\|x\|)^d.
%\end{array}
%\]
\end{proof}

Using Theorem~\ref{thm:finerEq} and Lemma~\ref{lem:jaja}, we can now
prove Theorem~\ref{thm:finerEqGen} in Section~\ref{sec:main.result} as follows.

\begin{proof}[Proof of Theorem~\ref{thm:finerEqGen}] Let $d':=d-\deg(h)$. We claim that
\[\closure(\P_d(S) + h(x)(\int\P_{d'}(S) - \int\P_{d'}(S)))= \{p(x) \in \P_d(S\cap h^{-1}(0)): \tilde p(x) \in \P_d( \rec{S} \cap \tilde h^{-1}(0))\},\]
and then the result follows from Lemma~\ref{lem:jaja}\eqref{lem:jajaint2}. Now to show the claim,
we use Lemma~\ref{lem:jaja}\eqref{lem:jajaint1}. Let $g(x) = (1+a\tr x)^{d-\deg(h)} \in \int \P_{d-\deg(h)}(S)$, and $h_1(x) = g(x)h(x)$. Thus $h_1(x) \in P_d(S)$ and $\deg(h_1) = d$. From Theorem~\ref{thm:finerEq}, we have
\begin{align}
\label{eq:gdef}
\closure(\P_d(S) + h_1(x)\R)
&= \{p(x) \in \P_d(S\cap h_1^{-1}(0)): \tilde p(x) \in \P_d( \rec{S} \cap \tilde h_1^{-1}(0))\}\\
&= \{p(x) \in \P_d(S\cap h^{-1}(0)): \tilde p(x) \in \P_d( \rec{S} \cap \tilde h^{-1}(0))\},
\end{align}
where the last equality follows from  Lemma~\ref{lem:jaja}\eqref{lem:jajaint1}, which implies that $h_1^{-1}(0) = h^{-1}(0)$, and $\tilde{h}_1^{-1}(0) = \tilde{h}^{-1}(0)$. Similary, for any $g(x) \in \int \P_{d-\deg(h)}(S)$, and letting $h_1(x) = g(x)h(x)$. one has that
\begin{align}
\label{eq:ggen}
\closure(\P_d(S) + h_1(x)\R)
&= \{p(x) \in \P_d(S\cap h_1^{-1}(0)): \tilde p(x) \in \P_d( \rec{S} \cap \tilde h_1^{-1}(0))\}\\
& \subseteq \{p(x) \in \P_d(S\cap h^{-1}(0)): \tilde p(x) \in \P_d( \rec{S} \cap \tilde h^{-1}(0))\},
\end{align}
where the last inclusion follows from $h_1^{-1}(0) = h^{-1}(0)$ and $\tilde{h}_1^{-1}(0) \supseteq \tilde{h}^{-1}(0)$.
From~\eqref{eq:gdef}, and~\eqref{eq:ggen}, it follows that 
\[
\begin{array}{lcl}
\{p(x) \in \P_d(S\cap h^{-1}(0)): \\
\qquad \qquad \qquad \tilde p(x) \in \P_d( \rec{S} \cap \tilde h^{-1}(0))\}
& = &\displaystyle \bigcup_{g(x) \in \int\P_{d'}(S)} \closure(\P_d(S) + h(x)g(x)\R)\\
%& = \closure\left (\bigcup_{g(x) \in \int\P_{d'}(S)} \closure(\P_d(S) + h(x)g(x)\R) \right) &(since (1) closed)\\
%& = \closure\left (\bigcup_{g(x) \in \int\P_{d'}(S)} \P_d(S) + h(x)g(x)\R \right) &(since (1) closed)\\
& =  &\closure(\P_d(S) + h(x)(\int \P_{d'}(S) - \int\P_{d'}(S))),
\end{array}
\]
where the last equality follows from the fact that the union above is closed.%$U_i(cl(A_i) = ...$
\end{proof}

\section{A generic counterexample}
\label{sec:counter}
We next show that indeed the statement of Theorem~\ref{main.thm.equ} fails if condition
 \eqref{eq.condition} is violated.
Let $a \in \R^n$ be defined by \eqref{eq:adef}.  Assume condition~\eqref{eq.condition}
in Theorem~\ref{main.thm.equ} does not hold.
Thus, there exists $s \in \rec{S} \cap \tilde{h}^{-1}(0)$ such that $s \not \in \rec{(S \cap h^{-1}(0))}$. Considering $s(a\tr s)^{-1}$ we assume $a\tr s =1$. From Lemma~\ref{lem:condition} it
follows that $(0,s) \in \ns{S}$. Also, by Lemma~\ref{lem:tilde+}\eqref{it:2}, $\ns{h}(0,s) = \tilde{h}(s) = 0$.
Therefore, $(0,s) \in \ns{S} \cap \ns{h}^{-1}(0)$. On the other hand, using Lemma~\ref{lem:condition} again, $(0,s) \not \in \ns{S \cap h^{-1}(0)}$. From Lemma~\ref{lem:barCompact}, $\ns{S \cap h^{-1}(0)}$ is compact. Thus $\epsilon =
\inf\{\|y - (0,s)\|:y \in \overline{S\cap h^{-1}(0)}\}  > 0.$

For any $d \ge 2$, take
\[
p(x):= (1+a\tr x)^{d-2}\left(1+\|(x - (1+a\tr x) s\|^2 - \epsilon^2(1+a\tr x)^2\right).
\]
We claim that $p(x) \in \P_d(S\cap h^{-1}(0))$ but $p(x) \not \in \closure(\P_d(S) + h(x) \R_{d-\deg(h)}[x])$.
Let $x \in S\cap h^{-1}(0)$, using Lemma~\ref{lem:tilde+}\eqref{it:3} we have
\[
\ns{p}(\ns{x}) = \|\ns{x}-(0,s)\|^2 - \epsilon^2 \ge 0.
\]
Thus, $\ns{p}(y) \in \P_d(\ns{S\cap h^{-1}(0)})$ and by Lemma~\ref{lem:tilde+}\eqref{it:4}, $p(x) \in \P_d(S\cap h^{-1}(0))$.
On the other hand,
$
\ns{p}(0,s) = -\epsilon^2.
$  Hence,
there exists $\delta>0$ such that for all $q \in \R_d[x]$
\begin{equation}\label{cont}
\|q-p\|<\delta \Rightarrow \ns{q}(0,s) < -\epsilon^2/2.
\end{equation}
Now to show that $p(x) \not \in \closure(\P_d(S) + h(x) \R_{d-\deg(h)}[x])$ we proceed by contradiction. Assume
 there exist $r_1(x) \in \P_d(S)$ and $r_2(x) \in\R_{d-\deg(h)}[x]$ such that $\|r_1 + h r_2 - p\|<\delta$. From \eqref{cont} we get
\begin{equation}\label{contra1}
\ns{r}_1(0,s) + \ns{h}(0,s)  \ns{r}_2(0,s) < -\epsilon^2/2.
\end{equation}
But, this is a contradiction because $\ns{h}(0,s) = 0$, and  $\ns{r}_1(0,s) \ge 0$ since $(0,s) \in \ns{S}$ and
from Lemma~\ref{lem:tilde+}\eqref{it:4}, $\ns{r}_1 \in \P_d(\ns{S})$.

\section{Horizon cone condition}
\label{sec:infconds}
In this section we provide some further details regarding the horizon cone (cf., Definition~\ref{def:horizon})
conditions used throughout the article. First, notice that condition~\eqref{eq.condition} generically holds in
one direction.

\begin{proposition}\label{prop:oneSideFree} For any $S \subseteq \R^n$ and $h \in \R[x]$ we have $\rec{(S \cap h^{-1}(0))} \subseteq \rec{S} \cap \tilde h^{-1}(0).$
\end{proposition}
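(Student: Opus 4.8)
The plan is to split the claimed inclusion into its two defining conditions: showing that any $y \in \rec{(S\cap h^{-1}(0))}$ satisfies both $y \in \rec{S}$ and $\tilde h(y) = 0$. The first is immediate from monotonicity of the horizon cone. Indeed, if $y \in \rec{(S\cap h^{-1}(0))}$, pick witnesses $x^k \in S\cap h^{-1}(0)$ and $\lambda^k \downarrow 0$ with $\lambda^k x^k \to y$ as in Definition~\ref{def:horizon}; since $x^k \in S$, the very same sequences witness $y \in \rec{S}$. So the only real content is the containment $\rec{(S\cap h^{-1}(0))} \subseteq \tilde h^{-1}(0)$.

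For that I would invoke Proposition~\ref{prop:inf}. On the set $S\cap h^{-1}(0)$ the polynomial $h$ vanishes identically, so both $h$ and $-h$ lie in $\P_{\deg h}(S\cap h^{-1}(0))$. Applying Proposition~\ref{prop:inf} (with $S$ replaced by $S\cap h^{-1}(0)$) to each gives $\tilde h \in \P_{\deg h}(\rec{(S\cap h^{-1}(0))})$ and $\widetilde{(-h)} = -\tilde h \in \P_{\deg h}(\rec{(S\cap h^{-1}(0))})$, using that negation leaves the top-degree component unchanged up to sign. Hence $\tilde h \ge 0$ and $\tilde h \le 0$ on $\rec{(S\cap h^{-1}(0))}$, forcing $\tilde h \equiv 0$ there, i.e. $\rec{(S\cap h^{-1}(0))} \subseteq \tilde h^{-1}(0)$. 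Combined with the first paragraph this yields the proposition.

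If one prefers a self-contained argument not invoking Proposition~\ref{prop:inf}, I would instead argue directly. Write $h = \sum_{j=0}^e h_j$ with $h_j$ homogeneous of degree $j$ and $h_e = \tilde h$, where $e = \deg h$. Using $\lambda^k > 0$ and homogeneity, $h_j(x^k) = (\lambda^k)^{-j} h_j(\lambda^k x^k)$, so from $h(x^k)=0$ one obtains $0 = (\lambda^k)^e h(x^k) = \sum_{j=0}^e (\lambda^k)^{e-j} h_j(\lambda^k x^k)$. Letting $k\to\infty$ and using continuity together with $\lambda^k x^k \to y$ and $\lambda^k \to 0$, every term with $j<e$ carries a strictly positive power of $\lambda^k$ and therefore vanishes, while the $j=e$ term tends to $\tilde h(y)$; hence $\tilde h(y)=0$.

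I do not expect any genuine obstacle here, since both routes are short. The only points requiring a little care are ensuring the witnessing scalars satisfy $\lambda^k>0$ (so that the inversion $h_j(x^k)=(\lambda^k)^{-j}h_j(\lambda^k x^k)$ is legitimate, which is guaranteed by $\lambda^k \downarrow 0$ possibly after discarding finitely many terms), and observing that the top-degree homogeneous component behaves correctly under negation so that $\widetilde{(-h)} = -\tilde h$.
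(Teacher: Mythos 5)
Your proposal is correct, and it contains two routes: your ``self-contained'' fallback argument is essentially the paper's own proof, while your primary route is genuinely different. The paper proceeds exactly as in your third paragraph: it takes witnesses $x^k \in S \cap h^{-1}(0)$, $\lambda^k \downarrow 0$, notes $y \in \rec{S}$ immediately, and then writes $h$ as $\tilde h$ plus lower-degree homogeneous components $f_\ell$, obtaining $\tilde h(y) = \lim_k (\lambda^k)^d\bigl(h(x^k) - \sum_{\ell < d} f_\ell(x^k)\bigr) = \lim_k \sum_{\ell<d} (\lambda^k)^{d-\ell} f_\ell(\lambda^k x^k) = 0$ (up to an immaterial sign), which is your limit computation. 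Your primary route---applying Proposition~\ref{prop:inf} to both $h$ and $-h$ over the set $S \cap h^{-1}(0)$, on which both are non-negative, and concluding $\pm\tilde h \ge 0$ on $\rec{(S\cap h^{-1}(0))}$---is a shorter derivation that exhibits the proposition as a formal corollary of the cited result; what it gives up is self-containedness, since Proposition~\ref{prop:inf} is quoted from an external reference (its proof is, unsurprisingly, a limit argument of the same flavor), whereas the paper's direct computation keeps the statement independent of that citation. One small refinement to your direct route: the division step $h_j(x^k) = (\lambda^k)^{-j} h_j(\lambda^k x^k)$ is unnecessary; multiplying $h(x^k)=0$ by $(\lambda^k)^e$ and using homogeneity in the form $(\lambda^k)^{j} h_j(x^k) = h_j(\lambda^k x^k)$ gives the same identity without ever inverting $\lambda^k$, which also disposes of the edge case where some $\lambda^k$ vanish (in which case $y=0$ and the claim is trivial anyway).
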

\begin{proof}
Let $d = \deg(h)$, and assume $ y \in \rec{(S \cap h^{-1}(0))}$.  Then there are sequences $x^k \in S, \lambda^k\in \R_+,\; k=1,\dots$  such that $h(x^k) = 0$, $\lambda^k\downarrow 0$ and $\lambda^k x^k\rightarrow y$. Thus, in particular, $ y \in \rec{S}$. On the other hand, for $\ell < d$ let $f_\ell(x)$ be the homogeneous component of $h(x)$ of degree $\ell$. We have that
\[
\tilde h(y)
= \lim_{k\rightarrow \infty}  (\lambda^k)^d \tilde h( x^k)
= \lim_{k\rightarrow \infty} ( \lambda^k)^d \brac{ h(x^k)- \sum_{\ell< d} f_{\ell}(x^k)}
= \lim_{k\rightarrow \infty} \sum_{\ell< d}(\lambda^k)^{d-\ell}f_\ell(\lambda^k x^k) = 0.
\]
\end{proof}

Next we examine a case in which the horizon cone condition~\eqref{eq.condition} holds
used in the article.

\begin{proposition}
\label{prop:infconds}
Let $q(x,x_{m+1}) = (x_{m+1} - \frac{1}{4}) - \|x\|^2$, let $h(x,x_{m+1})$ be a non-negative quadratic polynomial, and let $S = \{(x,x_{m+1}) \in \R^{m+1}: q(x,x_{m+1}) \ge 0 \}$. If $S \cap h^{-1}(0) \neq \emptyset$
then $\rec{(S \cap h^{-1}(0))} = \rec{S} \cap \tilde h^{-1}(0)$.
\end{proposition}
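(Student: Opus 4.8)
The plan is to establish the nontrivial inclusion $\rec{S}\cap\tilde h^{-1}(0)\subseteq \rec{(S\cap h^{-1}(0))}$; the reverse inclusion holds unconditionally by Proposition~\ref{prop:oneSideFree}, so the two together yield the claimed equality. Throughout I write $z=(x,x_{m+1})\in\R^m\times\R$ and $v_0:=(0,1)$ for the upward coordinate direction, and I use that ``non-negative quadratic'' means $h\in\P_2(\R^{m+1})$, i.e.\ $h\ge 0$ on all of $\R^{m+1}$.

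First I would compute $\rec{S}$ explicitly. The set $S=\{z:x_{m+1}\ge\tfrac14+\|x\|^2\}$ is the region above a paraboloid. Along any horizon sequence $z^k\in S$ with $\lambda^k\downarrow 0$ and $\lambda^k z^k\to(y,y_{m+1})$, one has $(\lambda^k)^2\|x^k\|^2\to\|y\|^2$, so $\lambda^k\|x^k\|^2\to\infty$ whenever $y\neq 0$; but $\lambda^k x_{m+1}^k\ge\lambda^k(\tfrac14+\|x^k\|^2)$ must tend to the finite limit $y_{m+1}$, forcing $y=0$. Since the upward ray is visibly a recession direction, this gives $\rec{S}=\{(0,t):t\ge 0\}=\R_+v_0$. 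Next, because $h\ge 0$ globally, its leading quadratic form $\tilde h(z)=z\tr A z$ has $A\succeq 0$, so $\tilde h(z)=0\iff Az=0$ and hence $\tilde h^{-1}(0)=\ker A$ is a linear subspace. Consequently $\rec{S}\cap\tilde h^{-1}(0)=\R_+v_0\cap\ker A$ is either $\{0\}$ (when $Av_0\neq 0$) or all of $\R_+v_0$ (when $Av_0=0$).

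This reduces the argument to two cases. If $Av_0\neq 0$, then $\rec{S}\cap\tilde h^{-1}(0)=\{0\}$, and since $S\cap h^{-1}(0)\neq\emptyset$ we trivially have $0\in\rec{(S\cap h^{-1}(0))}$, so equality holds. The substantive case is $Av_0=0$: then $\tilde h$ contains no term involving $x_{m+1}$, and I would invoke global non-negativity a second time to eliminate the linear $x_{m+1}$ term as well---a nonzero coefficient on $x_{m+1}$ would drive $h\to-\infty$ as $x_{m+1}\to-\infty$---concluding that $h$ depends on $x$ only. Fixing any $(x^*,x_{m+1}^*)\in S\cap h^{-1}(0)$, the translated points $(x^*,x_{m+1}^*+k)$ stay in $S\cap h^{-1}(0)$ for every $k\ge 0$, since the paraboloid constraint is preserved and $h(x^*)=0$ is unaffected; scaling by $1/k$ shows $v_0\in\rec{(S\cap h^{-1}(0))}$, and by conicity the entire ray $\R_+v_0=\rec{S}\cap\tilde h^{-1}(0)$ lies in $\rec{(S\cap h^{-1}(0))}$.

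The main obstacle is structural rather than computational: correctly identifying $\rec{S}$ via the paraboloid-domination estimate, and, in the case $Av_0=0$, using global non-negativity of $h$ a second time to rule out the linear $x_{m+1}$ term. It is precisely this last point that legitimizes the upward-translation argument producing the recession direction $v_0$; the remaining steps are routine.
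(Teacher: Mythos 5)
Your proof is correct, and its skeleton coincides with the paper's: the easy inclusion is delegated to Proposition~\ref{prop:oneSideFree}, the horizon cone $\rec{S}$ is pinned down as the vertical ray via the same paraboloid estimate, the argument splits on whether the vertical direction lies in $\tilde h^{-1}(0)$, and in the nontrivial case one rides up the paraboloid above a fixed point of $S\cap h^{-1}(0)$. The one genuine difference is the device used to extract structure from the non-negativity of $h$: the paper writes $h(x,x_{m+1})=\|Ax+x_{m+1}a-b\|^2$ (every globally non-negative quadratic is a sum of squares of affine forms), so the case split becomes $a\neq 0$ versus $a=0$, and in the latter case the independence of $h$ from $x_{m+1}$ is immediate; you instead use positive semidefiniteness of the leading form (so that $\tilde h^{-1}(0)=\ker A$ is a subspace) together with a separate elementary limit argument (letting $x_{m+1}\to-\infty$) to kill the linear $x_{m+1}$-term. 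Your route avoids invoking the sum-of-squares representation of non-negative quadratics at the cost of one extra step; otherwise the two arguments are interchangeable. A small bonus on your side: translating $(x^*,x^*_{m+1}+k)$, scaling by $1/k$, and then using conicity of the horizon cone works uniformly in $x^*$, whereas the paper's witness sequence ($x^k=\hat x$, $x^k_{m+1}=k\|\hat x\|^2+\tfrac14$, $\lambda_k=y_{m+1}/(k\|\hat x\|^2+\tfrac14)$) degenerates when $\hat x=0$, since there $\lambda_k$ does not tend to $0$; your construction sidesteps this edge case.
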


\begin{proof}
The inclusion $\rec{( S \cap h^{-1}(0))} \subseteq \rec{S} \cap \tilde{h}^{-1}(0)$ generically holds as stated in Proposition~\ref{prop:oneSideFree} above.
Now we show $\rec{( S \cap h^{-1}(0))} \supseteq \rec{S} \cap \tilde{h}^{-1}(0)$.
To do this, notice that if $(y,y_{m+1}) \in \rec{S}$, then $(y,y_{m+1})  = \lim_{k \to \infty} \lambda_k (x^k,x^k_{m+1})$ with
$\lambda_k \downarrow 0$, and $x_{m+1}^k \ge \frac 14 + \|x^k\|^2$. Thus
$y_{m+1} \ge \lim_{k \to \infty} \lambda_k (\frac 14 + \|x^k\|^2) \ge 0$ and $\|y\|^2 = \lim_{k \to \infty} \lambda_k^2\|x^k\|^2 \le \lim_{k \to \infty} \lambda_k^2(x^k_{m+1} - \frac 14) = 0$, which implies $y =0$.
Therefore $\rec S \subseteq  \{0\}^n \times  \R_+$.

Let $(y,y_{m+1}) \in \rec{S} \cap \tilde h^{-1}(0)$. Then $y=0$ and $y_{m+1} \ge 0$.
Notice that from the non-negativity of $h$ it follows that $h(x,x_{m+1}) = \|Ax + x_{m+1} a - b\|^2$,  where $A \in \R^{m\times n}$, $a\in \R^n$ and $b \in \R^n$ for some $n>0$.
We have then $0 = \tilde h(y,y_{m+1}) = \|Ay + y_{m+1} a\|^2 = y_{m+1}^2\| a\|^2$.

 Now we consider two cases. If $a \neq 0$, we obtain $y_{m+1} = 0$ and thus $(y,y_{m+1})= (0,0) \in \rec{( S \cap h^{-1}(0))}$.  If $a = 0$, let $(\hat x, \hat x_{m+1}) \in S \cap h^{-1}(0)$. Define $\lambda_k = \frac{y_{m+1}}{k\|\hat x_k\|^2 + \sfrac 14}$, $x^k = \hat x$, and $x^k_{m+1} = k \|\hat x\|^2 + \frac 14$,  for
$k\ge 1$. Note that $q(x^k,x^k_{m+1}) = (k-1)\|\hat x\|^2 \ge 0$ for $k \ge 1$, and that $\lim_{k \to \infty} \lambda_k x^k = 0 =y $,
and $\lim_{k \to \infty} \lambda_k x_{m+1}^k = y_{m+1}$. Also, $h(x^k,x^k_{m+1}) = \|Ax^k - b\|^2 = \|A\hat x - b \|^2 = 0$.
Thus, $(y,y_{m+1}) = (0,y_{m+1}) \in \rec{( S \cap h^{-1}(0))}$.
\end{proof}

The following is an example of a case in which the horizon cone condition in Theorem~\ref{thm:ineq} holds
that is used in Proposition~\ref{prop:apxnetzer}.

\begin{proposition}
\label{prop:infcond3}
Let $S = \R^2_+$ and $h(x_1,x_2) = (x_2-x_1^2)(2x_1^2-x_2)$. Then $\rec{S} \cap \tilde h^{-1}(\R_+) = \rec{(S \cap h^{-1}(\R_+))}$.
\end{proposition}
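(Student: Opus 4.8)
The plan is to compute both sides of the claimed identity explicitly and check that each equals $\{0\}\times\R_+$. First I would expand $h(x_1,x_2)=(x_2-x_1^2)(2x_1^2-x_2)=-2x_1^4+3x_1^2x_2-x_2^2$ and read off that its homogeneous component of highest total degree is $\tilde h(x_1,x_2)=-2x_1^4$, whence $\tilde h^{-1}(\R_+)=\{x\in\R^2: x_1=0\}$. Since $S=\R^2_+$ is a closed convex cone we have $\rec{S}=\R^2_+$, so the left-hand side satisfies $\rec{S}\cap\tilde h^{-1}(\R_+)=\{0\}\times\R_+$. This disposes of the left-hand side, and it remains to show $\rec{(S\cap h^{-1}(\R_+))}=\{0\}\times\R_+$ as well.

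Next I would identify the set $U:=S\cap h^{-1}(\R_+)$. On $\R^2_+$ the two factors of $h$ can be simultaneously nonpositive only at the origin, so $U=\{(x_1,x_2): x_1\ge 0,\ x_1^2\le x_2\le 2x_1^2\}$, the region trapped between the parabolas $x_2=x_1^2$ and $x_2=2x_1^2$. For the inclusion $\rec{U}\subseteq\{0\}\times\R_+$ I would take $y=(y_1,y_2)\in\rec{U}$ with witnesses $(x_1^k,x_2^k)\in U$ and $\lambda_k\downarrow 0$ such that $\lambda_k(x_1^k,x_2^k)\to y$; the lower parabola bound $x_2^k\ge (x_1^k)^2$ gives $\lambda_k x_2^k\ge \lambda_k^{-1}(\lambda_k x_1^k)^2$, and since the left side converges to the finite limit $y_2$ while $\lambda_k^{-1}\to\infty$, a nonzero $y_1$ would force $\lambda_k x_2^k\to\infty$, a contradiction; hence $y_1=0$, and nonnegativity of the coordinates gives $y_2\ge 0$. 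For the reverse inclusion $\{0\}\times\R_+\subseteq\rec{U}$, given $y_2\ge 0$ I would exhibit the explicit sequence $(x_1^k,x_2^k)=(k,k^2)\in U$ (it lies on the lower parabola, hence in $U$) with $\lambda_k=y_2/k^2\downarrow 0$, for which $\lambda_k(x_1^k,x_2^k)=(y_2/k,\,y_2)\to(0,y_2)$.

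Combining the two computations yields $\rec{(S\cap h^{-1}(\R_+))}=\{0\}\times\R_+=\rec{S}\cap\tilde h^{-1}(\R_+)$, as required. The only genuinely nontrivial step is the precise determination of $\rec{U}$: the ``easy'' inclusion is just the inequality analogue of Proposition~\ref{prop:oneSideFree} (with $h(x^k)=0$ replaced by $h(x^k)\ge 0$), whereas the reverse inclusion requires recognizing that, because $x_2$ grows quadratically in $x_1$ everywhere on $U$, every recession direction must be purely vertical, and then producing a witnessing sequence realizing an arbitrary vertical direction. I expect this construction of the witnessing sequence to be the main, if modest, obstacle.
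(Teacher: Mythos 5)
Your proposal is correct and follows essentially the same route as the paper's proof: both compute $\rec{S}\cap\tilde h^{-1}(\R_+)=\{0\}\times\R_+$ directly from $\tilde h=-2x_1^4$, both establish $\rec{(S\cap h^{-1}(\R_+))}\subseteq\{0\}\times\R_+$ from the sandwich $(x_1^k)^2\le x_2^k\le 2(x_1^k)^2$ (the paper multiplies through by $\lambda_k^2$ and takes limits, you argue by contradiction from the lower bound alone — the same computation), and both realize an arbitrary vertical direction with the identical witnessing sequence $x^k=(k,k^2)$, $\lambda_k=y_2/k^2$. Your explicit identification of $U$ as the region between the two parabolas is a helpful step the paper leaves implicit, but the argument is the same.
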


\begin{proof}
Let $U = S \cap h^{-1}(\R_+)$.
On one hand,
$
\rec{S} \cap \tilde h^{-1}(\R_+) = \rec{(\R^2_+)} \cap \{(x_1,x_2): -2x_1^4 \ge 0\} = R^2_+ \cap \{ (0,x_2): x_2 \in \R\} = \{(0,x_2): x_2 \ge 0\}$.
On the other hand,
for any $x_2\ge0$, let $x^k =(k, k^2)$, and  $\lambda_k = \frac{x_2}{k^2}$ for $k=1,\dots$.
Note that $x^k\in U, k=1,\dots.$, and $\lim_{k \to \infty} \lambda_k x^k
= (0,x_2)$. Thus $\rec{U} \supseteq \{(0,x_2): x_2 \ge 0\}$. Furthermore,
let $\lambda_k$, $x^k$, $k=1,2,\dots$ be a sequence associated
to $y \in \rec{U}$. Since $x^k\in U$, $k=1,2,\dots$ we have that
$(x_1^k)^2 \le x_2^k \le 2(x_1^k)^2$, $k=1,2,\dots$. From $\lambda_k > 0$, $k=1,2,\dots$,
it then follows that:
\[
\begin{array}{rcccll}
 %x_k^2 & \le &  y_k & \le &  2 x_k^2, & k=1,\dots.\\[1ex]
%\lambda_k^2 x_k^2 & \le & \lambda_k^2 y_k & \le & \lambda_k^2 2 x_k^2, & k=1,\dots.\\[1ex]
\dlim_{k \to \infty} \lambda_k^2 (x_1^k)^2 & \le & \dlim_{k \to \infty}  \lambda_k^2 x_2^k & \le &  \dlim_{k \to \infty}  \lambda_k^2 2 (x_1^k)^2,\\[1ex]
\dlim_{k \to \infty} (\lambda_k x_1^k)^2 & \le&  \dlim_{k \to \infty}  \lambda_k (\lambda_k x_2^k) & \le & \dlim_{k \to \infty}  2 (\lambda_k x_1^k)^2,\\[1ex]
y_1^2 & \le & 0 & \le & 2y_1^2,\\
\end{array}
\]
which implies  $\rec{U} \subseteq \{(0,x_2): x_2 \ge 0\}$.
Thus, $\rec{S} \cap \tilde h^{-1}(\R_+) = \{(0,x_2): x_2 \ge 0\} = \rec{(S \cap h^{-1}(\R_+))}$.
\end{proof}

\section{Concluding remarks}
\label{sec:remarks}

The work of \citet{Schm91, Puti93, Lass01} among many others, shows that
the question of when the non-negativity of a polynomial on a semialgebraic set can be
characterized via the associated quadratic module or pre-order is a central question
in the literature. Here we have considered a less restrictive form of this question
in which the relationship between a set $S$ and a polynomial $h$, together with
the non-negative polynomials in $S$, can be used to characterize the non-negativity
of polynomials in the set $S$ intersected with the zeros of $h$ without requiring compactness
assumptions on the set $S$. This result draws an interesting
parallel between results like Putinar's Positivstellensatz and the $S$-Lemma
as follows: One one hand,  Putinar's Positivstellensatz can be used
to write a hierarchy of LMI approximations for a very general class
of PO problems. On the other hand, the $S$-Lemma gives
a LMI reformulation for  a specific class of quadratic PO problems.
The fact that  the $S$-Lemma provides a LMI reformulation
of the problem (instead of a hierarchy of LMI approximations) can be
seen as a consequence of knowing the degree of the polynomials
involved in the certificate of non-negativity, as opposed to Putinar's Positivstellensatz
where the degree of the polynomials used to certify non-negativity are
not known a priori. In this context, the results presented here
provide an interesting bridge between the results on
certificates of non-negativity in algebraic geometry and
 results on certificates of non-negativity arising in
 the general area of quadratic programming. As shown throughout the
 article, this can be used to obtain novel results regarding the
 characterization of non-negative polynomials on possibly unbounded sets.

 We believe that further study of the characterization provided in Theorem~\ref{thm:finerEqGen}
 can lead to a wider application of polynomial optimization techniques for problems with
 unbounded feasible sets. Also, in the related literature both LMI hierarchies based on
 semidefinite programming and
 second-order cone programming  have become the most popular classes of
 LMI hierarchies in the area of polynomial optimization. The
 results presented here regarding linear programming hierarchies
 for polynomial optimization problems motivate further study of this type of relaxations.

% Appendix here
% Options are (1) APPENDIX (with or without general title) or
%             (2) APPENDICES (if it has more than one unrelated sections)
% Outcomment the appropriate case if necessary
%
%
%   or
%
% \begin{APPENDICES}
% \section{<Title of Section A>}
% \section{<Title of Section B>}
% etc
% \end{APPENDICES}

% Acknowledgments here
\section*{Acknowledgments.}
The authors would like to thank Markus Schweighofer for his valuable comments on a preliminary
version of this manuscript. The work of the first author is supported by NSF grant CMMI-1534850.
The work of the third author is supported by NSF grant CMMI-1300193.

% References here (outcomment the appropriate case)

% CASE 1: BiBTeX used to constantly update the references
%   (while the paper is being written).
%\bibliographystyle{apalike} % outcomment this and next line in Case 1
%\bibliography{polyrep_BibTeX} % if more than one, comma separated

% CASE 2: BiBTeX used to generate mypaper.bbl (to be further fine tuned)
%\input{mypaper.bbl} % outcomment this line in Case 2

\end{document}